\documentclass[10pt,reqno]{amsart}
\usepackage[top=1.2in, left=1.1in, right=1.1in, bottom=.9in]{geometry}

\title{A sharp quantitative stability result near infinitely concentrated minimisers}
\author{Melanie Rupflin and Sebastian Woodward}
\date{\today}
\usepackage[british,american]{babel}

\usepackage{mathrsfs}
\usepackage{amsfonts, amsmath, wasysym}
\usepackage{comment}

\usepackage{enumitem}
\usepackage{amssymb,amsthm}
\usepackage{nccmath}

\usepackage{todonotes}
\newcounter{todocounter}

\usepackage{
latexsym,
nicefrac,
}
\usepackage{url}

\definecolor{darkgreen}{rgb}{0,0.5,0}
\definecolor{darkred}{rgb}{0.7,0,0}
\usepackage[colorlinks, 
citecolor=darkgreen, linkcolor=darkred,urlcolor=black
]{hyperref}
\usepackage{esint}
\usepackage{bibgerm}
\usepackage[normalem]{ulem}

\theoremstyle{plain}
\newtheorem{lemma}{Lemma}[section]
\newtheorem{thm}[lemma]{Theorem}
\newtheorem{prop}[lemma]{Proposition}
\newtheorem{cor}[lemma]{Corollary}

\theoremstyle{definition}

\newtheorem{rmk}[lemma]{Remark}
\renewcommand\labelenumi{(\roman{enumi})}
\renewcommand\theenumi\labelenumi

\numberwithin{equation}{section}

\newcommand{\al}{\alpha}

\newcommand{\Ga}{\Gamma}
\newcommand{\de}{\delta}
\newcommand{\om}{\omega}
\newcommand{\Om}{\Omega}

\newcommand{\La}{\Lambda}
\newcommand{\rh}{\rho}
\newcommand{\si}{\sigma}

\newcommand{\Si}{\Sigma}
\renewcommand{\th}{\theta}

\newcommand{\ph}{\phi}

\newcommand{\ep}{\varepsilon}

\newcommand{\peps}{\partial_{\eps}}

\newcommand{\pmu}{\partial_\mu}
\newcommand{\R}{\ensuremath{{\mathbb R}}}
\newcommand{\N}{\ensuremath{{\mathbb N}}}

\newcommand{\C}{\ensuremath{{\mathbb C}}}

\newcommand{\err}{\text{err}}

\newcommand{\upto}{\uparrow}

\DeclareMathOperator{\inj}{inj}

\DeclareMathOperator{\id}{id}

\newcommand{\norm}[1]{\Vert#1\Vert} 

\def\osc{\mathop{{\mathrm{osc}}}\limits}

\newcommand{\beq}{\begin{equation}}
\newcommand{\eeq}{\end{equation}}
\newcommand{\beqs}{\begin{equation*}}
\newcommand{\eeqs}{\end{equation*}}
\newcommand{\beqa}{\begin{equation}\begin{aligned}}
\newcommand{\eeqa}{\end{aligned}\end{equation}}
\newcommand{\beqas}{\begin{equation*}\begin{aligned}}
\newcommand{\eeqas}{\end{aligned}\end{equation*}}
\newcommand{\brmk}{\begin{rmk}}
\newcommand{\ermk}{\end{rmk}}
\newcommand{\partref}[1]{\hbox{(\csname @roman\endcsname{\ref{#1}})}}
\newcommand{\half}{\frac{1}{2}}
\newcommand{\thalf}{\tfrac{1}{2}}

\newcommand{\na}{\nabla}

\newcommand{\pt}{\partial_t}
\newcommand{\abs}[1]{\vert#1\vert} 

\newcommand{\dist}{\text{dist}}

\newcommand{\supp}{\text{supp}} 
\newcommand{\eps}{\varepsilon}

\newcommand{\leqs}{\lesssim}
\newcommand{\geqs}{\gtrsim}
\newcommand{\Loj}{{\L}ojasiewicz }

\newcommand{\DD}{\mathbb{D}}

\newcommand{\ddt}{\tfrac{d}{dt}}

\newcommand{\la}{\lambda}
\newcommand{\Area}{\text{Area}}

\newcommand{\ps}{\partial_s}
\newcommand{\ddeps}{\tfrac{d}{d\eps}}
\newcommand{\ddelta}{\tfrac{d}{d\delta}}
\newcommand{\ppeps}{\partial_\eps}

\newcommand{\GG}{{\mathcal{G}}}
\newcommand{\ZZ}{{\mathcal{Z}}}

\newcommand{\RR}{{\mathcal{R}}}
\newcommand{\MM}{{\mathcal{M}}}
\newcommand{\EE}{{\mathcal{E}}}
\newcommand{\PP}{{\mathcal{P}}}

\newcommand{\barg}{{\bar g}}
\newcommand{\gbarmb}{{\bar g_{\mu,b}}}
\newcommand{\gbarla}{{\bar g_{\la,a}}}
\newcommand{\gmb}{{ g_{\mu,b}}}

\newcommand{\zbarla}{\bar z_{\la,a}}

\newcommand{\honebarmb}{{H^1(\Si,\gbarmb)}}

\newcommand{\hatja}{\widehat{j_a}}
\newcommand{\zmbla}{z_{\mu,b}^{\la,a}}

\newcommand{\gcyl}{{g_{\text{cyl}}}}

\usepackage{dsfont}
\makeatletter
\newcommand{\myitem}[1]{
\item[#1]\protected@edef\@currentlabel{#1}
}
\makeatother
\begin{document}

\begin{abstract}
We consider the question of quantitative stability of minimisers 
for a well-known variational problem for which the infimum of the energy is not achieved in the classical sense,
namely for the Dirichlet energy of degree $1$ maps from closed surfaces $(\Si,g_{\Si})$  of positive genus into the unit sphere $S^2\subset \R^3$.
For this variational problem it is natural to view 
configurations which consist of a constant map from the given domain and an infinitely concentrated rotation as generalised minimisers and to hence ask whether the distance of almost minimisers $v:\Si\to S^2$ to this set of infinitely concentrated minimisers can be controlled in terms of the energy defect $\de_v=E(v)-\inf E=E(v)-4\pi$. 

In this paper we develop a new dynamic approach that allows us to change the topology of the domain in a well controlled manner and to deform almost minimising maps from surfaces of general genus into harmonic maps from the sphere in a way that yields sharp quantitative estimates on all key features that characterise the distance to the set of infinitely concentrated minimisers, i.e. the scale of concentration, the $H^1$-distance to the nearest bubble on the concentration region and the $H^1$-distance to the nearest constant away from the concentration point. 
\end{abstract}
\maketitle

\section{Introduction and main results}\label{sec:intro}
Let $(\Si,g_\Si)$ be a closed orientable surface. We consider the Dirichlet energy 
 \beq
 \label{def:E-Dir-map}
 E(v)=E(v,g_\Si):= \tfrac{1}{2} \int_{\Si} \abs{\na_{g_\Si} v}_{g_\Si}^2 \text{d}v_{g_\Si}
 \eeq
 of maps $v:(\Si,g_\Si)\to S^2$. 
We recall that $E$ is invariant under conformal changes of the domain metric and isometries on the target. In the following, we can hence restrict our attention to metrics $g_\Si$ with curvature $K_{g_\Si} \equiv 1,0,-1$ for surfaces of genus $\gamma=0,1$ and $\geq 2$ respectively, additionally normalised by $\text{Area}(\Si,g_\Si) = 1$ if $\gamma=1$, and maps with non-negative degree.

  We also recall that the Dirichlet energy of maps from surfaces into arbitrary targets $N$ 
 is always bounded from below by the 
  area (counted with multiplicity), with equality if and only if the map is (weakly) conformal, i.e. if and only if $v^*g_{N}=\rho^2 g_{\Si}$ for a function $\rho\geq 0$.

In particular, for maps $v:(\Si,g_\Si)\to S^2$ of a given degree $k\in \N_0$ we have
$E(v)\geq 4\pi k $ 
with equality if and only if $v$ is conformal. 
Furthermore, we always have
\beq
\label{eq:inf=4pik}
\inf\{E(v,g_\Si):\quad  v:\Si \to S^2 \text{ with } \deg(v)=k
\}=4\pi k.  
\eeq
Indeed, writing for short $I=I(g_\Si):=\inj(\Si,g_\Si)$ and denoting by 
\beq \label{def:pi-mu}
\pi:\R^2\to S^2, \quad 
 \pi(x)= \big( \tfrac{2x}{1+\abs{x}^2}, \tfrac{1-\abs{x}^2}{1+\abs{x}^2} \big) \text{ and } \pi_\mu(\cdot)=\pi(\mu \cdot), \quad \mu>0\eeq
 the (rescaled) inverse stereographic projection, we can e.g. obtain a sequence of degree $k$ maps $v_n:\Si\to S^2$ with $E(v_n)\to 4\pi k$ by 
 working in local conformal coordinates $(x_1,x_2)$ on a ball $B_I(a)$, $a\in \Si$,
taking highly scaled copies $\pi_{\mu_n}(z^k)$, $\mu_n\to \infty$, of the degree $k$ harmonic map $\C\ni z\mapsto \pi(z^k)\in S^2$ 
and cutting them off to a constant on $B_{I}(a)\setminus B_{\half I}(a)$.  Here, and in the following, we identify $\R^2$ and $\C$ whenever convenient.
\vspace{0.1cm}

Conversely, the question of whether the minimum is achieved, i.e. whether the set of (classical) minimisers 
\beq
\label{def:Min-Set}
\MM_k(\Si,g_\Si) := \{ v : (\Si,g_\Si) \to (S^2,g_{S^2}) \text{ conformal with } \deg(v)=k  \}
\eeq 
is non-empty, depends on both the domain  $(\Si,g_\Si)$ and the prescribed degree $k$.

If $\Si$ is a sphere, then this set can be identified with 
the set of meromorphic functions from $\hat \C=\C\cup\{\infty\}$ to itself by working in stereographic coordinates on both the domain and the target, so $\dim(\MM_k(S^2))=4k+2>0$ for any $k\in \N_0$.

Conversely, 
the Riemann-Hurwitz formula excludes the existence of degree $1$ meromorphic functions from any surface $\Si$ of positive genus to $S^2$ so for all such domains the set of classical minimisers $\MM_1(\Si,g_\Si)$ is empty and 
\beq 
\label{def:energy-defect}
\de_v:=E(v)-4\pi>0 \text{ for every degree }1 \text{ map } v:(\Si,g_\Si)\to S^2.
\eeq
At the same time, we can view configurations which are given by a constant map $\om_0:\Si\to S^2$ away from a single point $a\in \Si$ and there described by an infinitely concentrated copy of a rotation $\RR\in SO(3)$ as generalised, infinitely concentrated minimisers of the above problem. 

It is hence natural to ask whether degree $1$ maps $v:\Si\to S^2 $ which almost minimise the energy must always be close to such a singular minimising configuration.

It is easy 
to answer this question positively at a qualitative level, i.e. to show that 
sequences of  maps $v_n$ whose energy defect 
$\de_{v_n}=E(v_n)-4\pi$ tends to zero subconverge (in a suitably defined $H^1$ sense) to such a singular minimiser. 

In the present paper we study the more challenging question of \textit{quantitative stability}, i.e.  whether (and how) the distance of a (classically defined) almost minimiser to the set of such infinitely concentrated  minimising configurations can be controlled in terms of the energy defect.

We note that the question of quantitative stability of minimisers for well known variational problems has attracted very significant attention in recent years, including in the context of the Isoperimetric problem, the Brunn Minkowski inequality, the  Yamabe problem, optimal Sobolev inequalities, spectral problems and conformal immersions, see e.g. \cite{sharp-isoperimetric,isoperimetric-FMP,BMI-1, Yamabe, Fusco-Sobolev,Figalli-Maggi-Pratelli-Sobolev, Sobolev-Maggi-Neumayer, Figalli-recent, Faber-Krahn, DeLellis-Mueller, Lamm-Nguyen,Luckhaus} and the references therein.

At the same time, there are few instances of quantitative stability results that apply in situations where the set of minimisers is genuinely non-compact (also modulo symmetries) or where the question of stability of minimisers is either false, or ill-posed, in the classical sense. 

A distinct feature of the problem under consideration, and a major challenge in the analysis, is that we want to compare objects of different topological type, namely a (classically defined) map $v$ from the given surface $\Si$ of positive genus with a minimising configuration that consists of a constant map from $\Si$ and an infinitely concentrated bubble. 

Before formulating any result, we hence first need to introduce a suitable notion of distance that captures the key features of the problem, i.e. that describes 
 \vspace{-0.5pc}
\begin{itemize}
    \item
   the distance of $v$ to the closest constant away from the concentration point; 
    \item 
    the distance of $v$ to the closest (rescaled) rotation on the bubble region; 
    \item
    the scale of concentration.
\end{itemize}
 \vspace{-0.5pc}
Formally, we can view a singular minimiser as a combination of a constant map $\om_0\equiv p\in S^2$ on $\Si\setminus \{a\}$ and an infinitely concentrated version $\RR\cdot \pi_\la$, $\la\to \infty$, of the map $\RR\cdot \pi: \R^2\to S^2$ that represents a given rotation $\RR\in SO(3)$ in stereographic coordinates. For the latter, it is natural to equip $\R^2$  with the metric $g_{\pi,\la}:=\pi_\la ^*g_{S^2}=\half \abs{\na \pi_\la}^2 dx^2$ that represents the standard metric of $S^2$ in the appropriately scaled stereographic coordinates.

To measure the distance of a given $v:\Si\to S^2$ from such a singular configuration, it is hence natural to consider 
 \vspace{-1pc}
\begin{enumerate}
\item the $\dot H^1(\Si\setminus B_{R}(a),g_\Si)$-norm, $R>0$, of $v-p$ and, as the behaviour of small regions does not contribute significantly to the $L^2$ norm,  the full $L^2(\Si,g_\Si)$ norm of $v-p$.
    \item the deviation from $\infty$ of the  scale $\la$ for which the $H^1(\DD_\iota, g_{\pi,\la})$ norm of $v_a-\RR\pi_\la$ is 
    minimal and the size of this norm.   
\end{enumerate}
 \vspace{-1pc}
Here, and in the following, we denote by $v_a$ the function that represents $v$ in standard conformal coordinates $x=F_a(p)$ around $a\in \Si$, i.e. coordinates which are chosen so that they provide an isometry from $B_I(a)$, $I=\inj(\Si,g_\Si)$,  to the disc $\DD_\iota=\DD_\iota(0)$ in $(\R^2,dx^2)$ for $\iota=I$ (if $\gamma(\Si)=1$) or in the Poincar\'e hyperbolic disc $(\DD_1,\frac{1}{(1-\abs{x}^2)^2}dx^2) $ for $\iota=\tanh(I)$  (if  $\gamma(\Si)\geq 2$). 
More generally, we denote by $B_R(a)$ geodesic balls in the given constant curvature domain $(\Si,g_\Si)$, and use the shorthands $r_\Si(R)$ for the radius of the corresponding disc, i.e. set $r_\Si(R)=R $ if $\gamma=1$ and $r_\Si(R)=\tanh(R)$ if $\gamma\geq 2$,  and $R_\Si(\cdot)$ for the inverse of this function $r_\Si(\cdot)$.

In this paper, we prove the following quantitative stability result 
that provides sharp bounds on all of the above key quantities that characterise the distance of a map to such infinitely concentrated minimisers.

\begin{thm}\label{thm:main}
Let $(\Sigma,g_\Si)$ be
any closed oriented surface with positive genus. Then there exists a constant $ C> 0$ such that the following holds true. \\
For any degree $1$ map $v\in H^1(\Sigma, S^2)$, there exists a rotation $\RR \in SO(3)$, a point $a\in \Sigma$, a value $p \in S^2$ and a scale $\lambda>1$ 
 such that the following holds. 
 \vspace{-1pc}
 \begin{enumerate}
 
\item  \label{part:thm-base}
The map $v$ is essentially given by the constant map $\om_0\equiv p$ on $\Si\setminus \{a\}$ in the sense that
\beq
\label{claim:constant-H01}
\|  \na v\|_{L^2(\Sigma \setminus B_{R}(a) ,g_\Si)} \leq C R^{-1} \delta_v^\half \text{ for every } R\in(0,\inj(\Si,g_\Si)), 
\eeq
and 
\beq
\label{claim:constant-L2}
\| v - p\|_{L^2(\Sigma ,g_\Si)} \leq C \delta_v^{\half}( 1 + \abs{\log \delta_v})^{\half}.
\eeq
     \item \label{part:thm-bubble}
    The function $v_a:= v\circ F_a^{-1}$ which describes $v$ in standard conformal coordinates around $a$
is essentially given by $\RR \pi_{\la}$ in the sense that 
\beq
\label{claim:thm-qs1}
\norm{v_a - \RR \pi_{\la}}_{H^1(\DD_{\iota},g_{\pi,\la}) }
\leq C \de_v^{\half},
\eeq 
$g_{\pi,\la}=\pi_{\la}^* g_{S^2}$, 
and the scale $\la$ satisfies 
\beq
\label{claim:la}
\la^{-1} \leq C\de_v^{\half}.
\eeq
\end{enumerate}
 \vspace{-1pc}
\end{thm}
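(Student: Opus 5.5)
Since $\de_v$ is bounded below by a positive constant for maps that are not close to concentrated configurations, the claims are trivial when $\de_v\geq \de_0$: choose $\la=2$ and enlarge $C$, using $\|\na v\|_{L^2}^2\leq 8\pi+2\de_v$. So I will assume throughout that $\de_v<\de_0$ is small. The qualitative compactness statement recalled in the introduction then yields a point $a$, a rotation $\RR$ and a large scale $\la$ such that $v$ is $H^1$-close to $\RR\pi_\la$ on $\DD_\iota$ and close to a constant away from $a$. The task is to make all of these closeness statements quantitative.

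The dynamic approach announced in the abstract suggests the following strategy. I would deform the domain $(\Si,g_\Si)$ through a family of degenerating metrics $g(t)$. The idea is to pinch off, near $a$, the bubble region from the genus-carrying part. In the limit the domain becomes a sphere $\hat\C$ attached to a thin collar, with the rest of $\Si$ carrying only the near-constant part. Simultaneously I would flow $v$ by a gradient-type flow (harmonic map flow coupled to the metric evolution), keeping the energy below $E(v)$. Along this path the energy defect should be controlled, and the terminal configuration is a harmonic map from $S^2$ of degree $1$, i.e. a rotation composed with a Möbius transformation. The crucial quantitative input is a \Loj-type inequality of the form
\beq
\de\leq C\|\tau_g(v)\|_{L^2}^2+\text{(metric derivative term)} .
\eeq
It is sharp with exponent $1$ because the limiting problem on $S^2$ is integrable and non-degenerate modulo Möbius maps. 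Integrating this inequality along the flow, Cauchy--Schwarz gives that the total $H^1$ distance travelled is at most $C\de_v^{1/2}$. This yields \eqref{claim:thm-qs1} after pulling back to $\DD_\iota$ with the $g_{\pi,\la}$ norm.

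The scale bound \eqref{claim:la} should come from an energy expansion. A degree $1$ map on a positive-genus surface pays a defect of order $\la^{-2}$. This cost arises because the constant at infinity of $\pi_\la$ must be matched with a map whose harmonic extension across the genus is not conformal. Concretely, I would compute
\beq
E(\RR\pi_\la\text{ cut off at scale }I)-4\pi\geq c\la^{-2}
\eeq
via the $1/|x|$ tail of $\pi_\la$ interacting with the non-trivial holomorphic quadratic differentials / harmonic one-forms on $\Si$. Combined with the previous step, this gives $\la^{-2}\leq C\de_v$.

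For part \partref{part:thm-base}, I would write $v=p+w$ on $\Si\setminus B_R(a)$. The bubble has tail $|\na\pi_\la|\sim\la^{-1}|x|^{-2}$, so its contribution is $\|\na\pi_\la\|_{L^2(|x|>R)}\sim\la^{-1}R^{-1}$. Adding the remainder from \eqref{claim:thm-qs1} and \eqref{claim:la} gives \eqref{claim:constant-H01}. For \eqref{claim:constant-L2}, I would apply Poincaré on $\Si\setminus B_{R}(a)$ and integrate the decay $R^{-1}\de_v^{1/2}$ dyadically from $R\sim\la^{-1}\sim\de_v^{1/2}$ up to $I$. Summing $\|\na v\|^2_{L^2(A_k)}$-weighted contributions over the $|\log\de_v|$ dyadic annuli produces the logarithm. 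The inner ball $B_{\la^{-1}}$ contributes only $O(\la^{-2})$ to the squared $L^2$ norm.

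The main obstacle is the first step: constructing the controlled topology-changing deformation. Along it one needs a uniform, sharp (exponent $1$) \Loj inequality, valid even as the metric degenerates, with constants independent of the collar length. I would try to prove it by reducing, on the degenerating collar, to the known linearised non-degeneracy of degree $1$ harmonic maps $S^2\to S^2$. The error terms from the collar would be controlled by the holomorphic-quadratic-differential estimates of Teichmüller harmonic map flow.
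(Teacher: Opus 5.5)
Your outline matches the paper's architecture (a coupled map/metric gradient flow controlled by a sharp \Loj inequality). But the step you defer as ``the main obstacle'' is essentially the whole proof, and the route you sketch would not give a finite distance bound. If your degeneration changes the conformal structure (a genuinely pinching collar), uniform constants are lost, cf.\ the remark after the theorem. If it stays in the conformal class, the natural concentrating metrics $\bar g_{\mu,b}$ have $\norm{\pmu\bar g_{\mu,b}}_{L^2(\Si,\bar g_{\mu,b})}\sim\mu^{-1}$ on the bubble region. That metric path has infinite $L^2$-length, so integrating a \Loj inequality along it bounds nothing.

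The missing idea is the family $\GG_b=\{T_{\mu,b}^*\bar g_{\mu,b}\}$. The diffeomorphisms $T_{\mu,b}$ freeze the metric on an exhausting region that becomes the punctured sphere and confine $\pmu g_{\mu,b}$ to a set of area $O(\mu^{-2})$. Hence $\norm{\pmu g_{\mu,b}}_{L^2}\sim\mu^{-2}$ and the total length to $\mu=\infty$ is $\sim\mu_0^{-1}$. For the restricted energy $\EE_b$ you then need Theorem~\ref{thm:Loj-new}, including $\mu^{-1}\leqs\norm{\na\EE_b}$.

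Your heuristic for exponent one (integrability on $S^2$) is not the mechanism. On $\Si$ there are no critical points near the infimum, the models satisfy $-\frac{d}{d\la}E(\bar z_{\la,a})\sim\la^{-3}$, and the fixed-metric estimate \eqref{est:Loj-old} carries a logarithmic loss. The sharp exponent comes from testing $\na\EE_b$ against the special variation of Lemma~\ref{lemma:special-variation}, which shifts the model scale while moving the pulled-back pair by only $O(\la^{-2})$. Reaching the near-bubble-tree regime also requires the neck exclusion of Lemma~\ref{lemma:derive-thm}, driven by the projected stress tensor. Finally, you must start the flow at the scale fixed by \eqref{def:flow-scale} and show this concentration persists, which rules out bubbling and forces $\mu(t)\to\infty$. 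The scale bound \eqref{claim:la} then follows at once from $\mu_0^{-1}\leqs\int\norm{\pt g}_{L^2}\,dt\leqs\de_v^{1/2}$ together with $\la\sim\mu_0$.

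Two downstream steps are also wrong as stated. First, harmonic map flow is an $L^2$-gradient flow, so the Simon-type argument $-\frac{d}{dt}\de^{1/2}\geq c\norm{\pt(u,g)}_{L^2}$ controls only $L^2$-length. What you get is a weighted bound $\norm{v_a-\RR\pi_\la}_{L^2(\DD_\iota,g_{\pi,\la})}\leqs\de_v^{1/2}$, not \eqref{claim:thm-qs1}. The paper upgrades this via the identity $\int_\Si\abs{\na(v-\bar z)}^2=2\int_\Si\Delta\bar z\cdot(v-\bar z)+2(E(v)-E(\bar z))$ for the global model $\bar z=\bar z_{\la,a}$. That argument uses $E(\bar z)\geq4\pi$, $\Delta\pi_\la=-\abs{\na\pi_\la}^2\pi_\la$, the quadratic smallness of $\pi_\la\cdot(v_a-\bar z_a)$, and the already established bound on $\la^{-1}$. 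The resulting global estimate \eqref{claim:I-main-proof} is also what gives \eqref{claim:constant-H01} on $\Si\setminus B_I(a)$, a region your disc estimate \eqref{claim:thm-qs1} does not directly control.

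Second, your energy-expansion route to \eqref{claim:la} only bounds from below the defect of one particular cut-off bubble. Transferring this to $v$ needs a quantitative comparison of $E(v)$ with that model at the same scale, which your outline does not supply.
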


We remark that the exponents in all of these estimates are sharp.
This is trivially true for the exponent $\half$ in  \eqref{claim:constant-H01} 
 and \eqref{claim:thm-qs1}
 as this is the maximal exponent for which a quantitative stability estimate of the form $\dist(u,\MM)\leqs \de_u^{\alpha}$ can hold for any smooth variational problem. 
 Similarly, 
 as cutting $\pi_\la$ off to $\pi(\infty)=(0,0,-1)$ on a fixed size annulus gives a map $v$ with $\de_v\sim\la^{-2}$ for which 
 \beq
 \norm{v-\fint_\Si v}_{L^2(\Si,g_\Si)} \geqs \norm{(\pi_\la^1,\pi_\la^2)}_{L^2(\DD_r)}\sim\la^{-1}(\log\la)^\half,
 \eeq
 we see that also the estimates  \eqref{claim:constant-L2} and \eqref{claim:la} 
are sharp.

\begin{rmk}
We note that the constant $C$ in the above theorem must be allowed to depend not only on the topology of $\Si$, but also on the metric $g_\Si$ (or some relevant geometric quantity associated to $g_\Si$ like a lower bound on $\inj(\Si,g_\Si)$ that excludes a degeneration in moduli space). To see this one can e.g. consider degree $1$ maps $v_j$ from degenerating hyperbolic surfaces $(\Si,g_j)$ with $E(v_j,g_j)\to 4\pi$ whose restriction to a disconnecting degenerating hyperbolic collar $(\mathcal{C}(\si_j), g_j) \simeq ([-X(\ell_j), X(\ell_j)] \times S^1, \rho_{\ell_j}^2 (s)(ds^2+d\theta^2))$, $\ell_j=L_{g_j}(\si_j)\to 0$, 
 converge to the harmonic map $(s,\theta)\mapsto \pi(e^{s+i\theta})$ from the infinite cylinder $\R\times S^1$. \\
 Similarly, we cannot expect that the above statements hold for $p=\RR\pi(\infty)$ as we can e.g. construct maps $v_\la:(\Si,g_\Si)\to S^2$ 
 which are given by $\pi_{\la}$ on a ball with radius $\la^{-\half}$ around some $b$, coincide with a constant $p\neq \pi(\infty)$ outsides of a fixed ball $B_{r}(b)$ and whose energy defect $\de_{v_\la}\sim (\log\la)^{-1}$ tends to zero as $\la\to \infty$.
\end{rmk}

We note that in the case where $\Si$ is a sphere, the question of quantitative stability of degree $1$ 
minimisers is significantly different, and far less delicate, than for domains of positive genus. Indeed, if $\Si=S^2$ then the minimum is not only achieved, but the set of minimisers indeed agrees with the symmetry group $\textit{M\"ob}(S^2)$ of M\"obius transforms. In practice this means that the difficulty posed by concentration of energy can be avoided by working in the correct gauge, compare \cite{Topping-deg-1}, and quantitative stability of minimisers holds in the classical sense. Indeed,  Bernand-Mantel, Muratov and Simon \cite{BMS} first established that
\beq
\label{est:QS-S2-deg1}
\dist_{\dot H^1(S^2,g)}(v,\text{M\"ob}(S^2))\leq C \de_v^{\half}
\eeq
holds true 
for every degree $1$ map $v:S^2\to S^2$ and a universal constant $C$, alternative proofs of this result
were subsequently given by Hirsch and Zemas in \cite{H-Z} using elliptic regularisation techniques and by Topping in \cite{Topping-deg-1} using the harmonic map flow, and extensions to higher dimensions were recently proven by  Guerra, Lamy and Zemas in \cite{ZLG1,ZLG2}.
We note that a quantitative understanding of the Dirichlet energy of maps into $S^2$ is not only of interest in geometric analysis, but also in applications where the Dirichlet energy (also called exchange energy) forms a key term e.g. in models from material science, see e.g. \cite{DIL,MSkyr} and the references therein.

We also note that the quantitative stability of minimisers for maps $v:S^2\to S^2$ of arbitrary degree $k\geq 2$ was addressed by the first author in \cite{R-Rig-23}. In this case the set of (classical) minimisers $\MM_k(S^2)$ is given by the degree $k$ rational functions, so non-empty, but stability of minimisers fails in the classical sense even at the qualitative level, i.e. there exist sequences of maps $v_n$ of any given degree $k\geq 2$ with $E(v_n)\to 4\pi k$ for which $\dist_{H^1}(v_n,\MM_k(S^2))\nrightarrow 0$. However, if one expands the class of comparison objects to additionally include collections of rational functions of total degree $k$ which are concentrated on essentially disjoint subsets then not only qualitative, but also quantitative stability of minimisers can be recovered. Indeed, the results of \cite{R-Rig-23} provide sharp estimates 
of the form $\dist\leq C \de_u^\half (1+\abs{\log \de_u})^\half$ on the distance of any given degree $k$ map to the nearest collection of such rational maps, and provide additional (not necessarily sharp) control on the relative scales and location of the set on which the maps that make up this collection are concentrating, see Theorems 1.1 and Theorems 1.4 of \cite{R-Rig-23} for detail. As the arguments in \cite{R-Rig-23} exploit the symmetry group  $\textit{M\"ob}(S^2)$  of the domain $\Si=S^2$ and the fact that all components of the limiting configuration can be parametrised over this $\Si$ they are not applicable in the present setting of maps from surfaces of positive genus. We hence need to develop a new and more flexible approach that provides a well controlled way to deform a map from a given surface into a limiting map that is defined on a domain of different topology.

\textit{Overview of the key steps in the proof of our main result:}\\
A key aspect of our proof is that we will simultaneously deform the map and the domain. 
To achieve the latter we will construct families of metrics that deform suitably chosen representatives of the given conformal class $[g_\Si]$ of metrics on the domain surface of positive genus into a punctured sphere in a well controlled way that in particular allows us to keep the metric essentially fixed on the bubble region.

To this end we first consider metrics $\bar g_{\mu,b}$, $b\in \Si$, $\mu\geq \mu_*=\mu_*(g_\Si)$,  which are conformal to $g_\Si$ but weighted in a way that 
they scale the ball $B_{R_\Si(\mu^{-1})}(b)$ up to a hemisphere, are so that a fixed size ball  $B_{R_2}(b)$ in the original surface becomes isometric to $S^2\setminus B^{g_{S^2}}_{s}(\pi(\infty))$ for some $s\sim \mu^{-1}$ and so that $g_\Si$ gets scaled down by a constant $c_{\mu}\sim \mu^{-2}$ outside of a slightly larger ball $B_{R_0}(b)$. In particular, in the limit $\mu\to \infty$ these weighted metrics turn $\Si$ into a punctured sphere.  

Instead of directly working with these metrics $\bar g_{\mu,b}$, we then construct a second family of metrics $g_{\mu,b}$ which are isometric to $\bar g_{\mu,b}$ but chosen in a way that the curves $\mu\mapsto g_{\mu,b}$ provides a far more efficient way of deforming $(\Si,g_{\mu,b})$ into a punctured sphere than what would be possible by just conformal deformations. 
To be more precise, given any $b\in \Si$
we will construct diffeomorphisms $T_{\mu,b}:\Si\to \Si$, $\mu\geq \mu_*$,  so that the resulting metrics 
$g_{\mu,b}=T_{\mu,b}^*\bar g_{\mu,b}$ satisfy 
$\norm{\partial_\mu g_{\mu,b}}_{L^2(\Si,g_{\mu,b})}\sim \mu^{-2}$, allowing us to deform a given such metric $g_{\mu_0,b}$ into a singular metric $g_{\infty,b}$ by moving only a distance of
\beq \label{est:dist-metric-to-infty}
\int_{\mu_0}^\infty \norm{\partial_\mu g_{\mu,b}}_{L^2(\Si,g_{\mu,b})}\sim \mu_0^{-1}.\eeq
We will also be able to ensure that these metrics are essentially ordered in the sense that the inequality $g_{\mu,b}\geq c g_{\tilde \mu,b}$ holds for all $\tilde \mu\geq \mu$ and a universal constant $c>0$ (later chosen as $c=e^{-3})$.

Given a map $v$ with small energy defect 
we will then first choose $b\in \Si$ and $\mu_0\gg 1$ 
so that 
\beq
\label{eqn:exact-bubble}
E(v,B_{R_\Si(\mu_0^{-1})}(b))=\sup_{\tilde b \in \Sigma} E(v,B_{R_\Si(\mu_0^{-1})}(\tilde b))=2\pi. 
\eeq
We then devise a way that allows us to deform the initial pair $(U_0:= v,G_0:=\bar g_{\mu_0,b})$ of map and metric 
into a limit $(U_\infty,G_\infty)$ which consists of
 \vspace{-1pc}
\begin{itemize}
    \item a singular metric $G_\infty$ which is supported on a subset $ \Om_\infty$ of $\Si$ that corresponds to a ball $B_{\hat R_\infty}(b)$ with  $\hat R_\infty\in (R_2,R_0)$ in the original surface $(\Si,g_\Si)$,  agrees with  $G_0$ on a fixed set $B_{R_2}(b)$ and is so that $(\Om_\infty,G_\infty)$ is isometric to the punctured sphere $S^2\setminus \{\pi(\infty)\}$, 
    \item a harmonic map $U_\infty:(\Om_\infty,G_\infty)\to S^2$. 
\end{itemize}
 \vspace{-1pc}
Crucially, we will be able to carry out this deformation through a curve of maps  $U(t)$ and metrics $G(t)=(T_{\mu_0,b})_*g_{\mu(t),b}$ for which   
\beq \label{est:key-dist-intro}
\int_0^\infty \norm{\pt U}_{L^2(\Si,G(t))} +\norm{\pt G}_{L^2(\Si,G(t))} dt\leq C \de_v^{\half} .\eeq
This will yield both the desired bound \eqref{claim:la} on the bubble scale, compare \eqref{est:dist-metric-to-infty}, as well as a weighted $L^2$ estimate for the map component of the form 
$$\norm{v-U_\infty}_{L^2(\Om_\infty,G_\infty)}\leq C \de_v^{\half}$$
from which we will then be able to derive all other claims  made in our main result.

To obtain this curve $(U(t),G(t))$ we first 
 pull-back both the given map $U_0=v$ and the given weighted metric $G_0=\bar g_{\mu_0,b}$ by the corresponding (fixed) diffeomorphism $T_{\mu_0,b}: \Si \to \Si $ to obtain a new pair $(u_0,g_0)=T_{\mu_0,b}^*(U_0,G_0)=(v\circ T_{\mu_0,b},g_{\mu_0,b}) $
that represents the given configuration in new coordinates, but is now so that $g_0$ is an element of our specially constructed family of metrics $\GG_{b}:=\{g_{\mu,b}, \, \mu\geq \mu_*\}$.

We then deform $(u_0,g_0)$ using a new geometric flow that evolves both the map and the metric, similarly to \cite{R-T-TMHF-1}. Namely we consider the negative $L^2$-gradient flow of the Dirichlet energy $\mathcal{E}_b=E\vert_{H^1(\Si,S^2)\times \GG_b}$, considered as a function of both an (arbitrary) $H^1$ map $u:\Si\to S^2$ and a metric $g$ that is constraint to lie in this $1$-parameter family of metrics $\GG_b$ for the fixed point $b$ chosen in \eqref{eqn:exact-bubble}. Formally,  this flow is given by 
\beq
\label{def:flow-abstract}
\pt u=-\na_u^{L^2(\Sigma,g)}E(u,g), \qquad \pt g=-P^{T_g\GG_b}( \na_g^{L^2(\Sigma,g)}E(u,g)),
\eeq
where $P^{T_g \GG_{b}}$ denotes the $L^2(\Si,g)$ orthogonal projection from the space of $(0,2)$-tensors to $T_g \GG_{b}$. 

We recall that the negative $L^2$-gradient of the Dirichlet energy with respect to the  map component is given by the tension field, so here where we map into $S^2$ by 
$$  - \nabla_u^{L^2(\Si,g)} E(u,g) = \tau_g(u) =\Delta_g u + \abs{du}_g^2 u.$$
We also recall that the negative $L^2$-gradient of the Dirichlet energy with respect to general variations of the metric component is given in terms of the energy stress-tensor, namely 
\beq \label{def:energy-stress}
- \nabla_g^{L^2(\Si,g)} E(u,g) = \tfrac12k(u,g) \text{ for }
k(u,g):= u^* g_{S^2} - \tfrac{1}{2}\abs{du}_g^2 g.\eeq 
Our flow hence evolves a given initial map $u_0\in H^1(\Si,S^2)$ and a given initial metric $g_0\in \GG_b$ by
\beqa
\label{eq:flow}
\pt u = \tau_g(u), \qquad
\pt g = \tfrac{1}{2}P^{T_g\mathcal{G}_b}\left(k(u,g)\right).
\eeqa
A crucial role in the analysis of this flow 
is played by a \Loj estimate of the form 
\beq \label{est:Loj-intro}
\abs{E(u,g)-4\pi}\leq C \norm{\na \EE_b(u,g)}_{L^2(\Si,g)}^2, 
\eeq
which 
provides uniform bounds on the energy defect in terms of the norm of the gradient 
$\na \EE_b(u,g)=\big(\na_u^{L^2(\Si,g)} E (u,g) \,,\, P^{T_g\mathcal{G}_b} \na_g^{L^2(\Si,g)} E(u,g)\big)
$ of the restricted energy functional $\EE_b=E\vert_{H^1(\Si,S^2)\times \GG_b}$.

We prove two versions of 
 this and related \Loj estimates that might be of independent interest
 in Section \ref{sec:Loj-Est}, one in the more typical form of a result that applies in a suitable neighbourhood of the set of critical points (which in our case are however infinitely concentrated), see Theorem \ref{thm:Loj-NHD}, and a version for maps with small energy defect that is crucially used in the proof of our main result, see 
  Theorem \ref{thm:Loj-new}. As explained in Section \ref{sec:Loj-Est}, this second version can be obtained from the first version based on a careful analysis of almost critical points of $ \EE_b$, which in particular uses that control on $P^{T_g\mathcal{G}_b} \na^{L^2(\Si,g)}_g E$ excludes the formation of non-trivial necks in the image even when the geometry of the domain degenerates, compare Remark \ref{rmk:excluding-necks}. Theorem \ref{thm:Loj-NHD} in turn will be proven in 
Sections \ref{sec:proof-LojEst} and \ref{sec:technical-proofs}
based on the approach developed by Malchiodi, Sharp and the first author in \cite{MRS}.

For maps with small energy defect this key \Loj estimate will allow us to a priori exclude the formation of singularities of the flow that are caused by the bubbling off of harmonic spheres,  hence allowing us to continue the flow smoothly until it achieves its goal of changing the topology of the domain from a surface of positive genus to a punctured sphere. 
Equally importantly, the \Loj estimate \eqref{est:Loj-intro} yields a priori bounds on the distance travelled by solutions of \eqref{eq:flow}, and hence by $(U,G)(t)=(T_{\mu_0,b})_* (u,g)(t)$,
in the precise form  \eqref{est:key-dist-intro} that we require to derive the sharp quantitative stability estimates stated in our main result.

\textit{Outline of the paper:} \newline
The 1-parameter families $\GG_b$ of metrics that allow for the efficient and well controlled change of the topology are constructed in Section \ref{sec:metrics} and their key properties are summarised in Proposition \ref{prop:metrics-main}.  We then state the relevant \Loj estimates in Section \ref{sec:Loj-Est} and use them to analyse the behaviour of solutions of our new gradient flow in Section \ref{sec:flow}.
This will allow us to complete the proof of our main result in Section 
 \ref{sec:thm1-proof}, up to the proof of our \Loj estimates which is carried out in Sections \ref{sec:proof-LojEst} and \ref{sec:technical-proofs}.

 \textit{Notation and conventions:}
\\
Throughout the paper we use the following conventions: Unless indicated otherwise constants $C$ are positive and only depend on the fixed  domain $(\Si,g_\Si)$, with 
 the shorthands $\leqs$, $\sim$ and $\geqs$ being used if inequalities hold up to such multiplicative constants $C=C(\Si,g_\Si)>0$, quantities on $\Si$ are computed with respect to $g_\Si$,  
 quantities on discs $\DD_r$ are computed with respect to the standard metric $dx^2$ and derivatives with respect to $\eps$ and $\de$ are evaluated at $\ep=0=\de$.
\\
 As noted above, we will always use local conformal coordinates $x=F_a(p)$, $a\in \Si$, which provide isometries from $(B_R(a),g_\Si)$, $R\in (0,I=\inj(\Si,g_\Si)]$  to $(\DD_{r_\Si(R)},\rho_\Si^2(x)dx^2)$, where $ \rho_\Si\equiv 1$ and $r_\Si(R)=R\in (0,\iota:=I)$ if $\gamma=1$ while $\rho_\Si(x)=(1-\abs{x}^2)^{-1}$ and $r_{\Si}(R)=\text{tanh}(R)\in (0,\iota:= \tanh(I)]$ if $\gamma\geq 2$, further normalised by Remark \ref{rmk:coord-normal} if we work with varying basepoints. We denote by $F_{a,b}:= F_{b}\circ F_a^{-1}$ the corresponding transition maps, ordering indices so that we transition from the first named chart to the second one. 

\textit{Acknowledgement:} SW was supported by the Engineering and Physical Sciences Research
Council.

\section{Definition and basic properties of metrics}\label{sec:metrics}
In this section we provide a detailed construction and analysis of the metrics which we use to prove our main result. 
To define these metrics $\bar g_{\mu,b}$ and $g_{\mu,b}$, $b\in \Si$,  $\mu\geq \mu_*=\mu_*(\Si,g_\Si)>1$, we will work with balls $B_{R_i}(b)$ in $(\Si,g_\Si)$ with fixed radii $R_i=R_{\Si}(r_i)=r_{\Si}^{-1}(r_i)$  for 
\beq\label{def:ri-mu-star}
r_0:= \mfrac12 \min(\iota,1),\qquad 
r_1:= \mfrac14 r_0, \qquad r_2:= e^{-4} r_1, \quad r_3=e^{-1} r_2, \text{ and set } 
\mu_*:= e^5  r_1^{-1}=r_3^{-1}.
\eeq 
Fixing some $\phi \in C_c^{\infty}(\DD_{r_0},[0,1])$ with $ \phi \equiv 1 \text{ on } \DD_{\frac{r_0}{2}}$, we
define $\bar{g}_{\mu,b}=\bar\rho_{\mu,b}^2g_\Si$,  
 $b \in \Si$,  $\mu \geq \mu_*$,
by 
 \beq
\label{def:conformal-factor-constant}
\bar{g}_{\mu,b} =c_{\mu}^2 g_{\Si} \text{ on }  \Si\setminus  B_{R_0}(b) \text{ for }
c_\mu := \tfrac1{\sqrt{2}} \abs{ \na \pi_{\mu}(r_0e^{i\theta})}=
\tfrac{2\mu}{1+\mu^2r_0^2}
\eeq
and by $\bar g_{\mu,b}=\phi\circ  F_b \cdot F_b^*g_{\pi,\mu}+(1-\phi\circ F_b)\cdot c_\mu^2 g_\Si$
on $ B_{R_0}(b)$, i.e. chose $\bar{g}_{\mu,b}$ so that it is 
represented by 
\beq
\label{def:conformal-factor-vary} 
\phi \ g_{\pi,\mu}
+(1-\phi)c_\mu^2
 \rho_{\Si}^2 dx^2, \qquad g_{\pi,\mu}=\pi_\mu^*g_{S^2}
\eeq
in the usual local conformal coordinates $F_b:B_{R_0}(b)\to \DD_{r_0}$. 

We note that these  weighted metrics $\bar g_{\mu,b}$ are equivalent to those used in \cite{R-Loj-1} and are defined in a way that 
sets which are given by geodesic balls $B_{R_\Si(r)}(b)$, $0<r\leq \half r_0$, in the original surface turn into sets  $(B_{R_\Si(r)}(b),\bar g_{\mu,b})$ which are isometric to the subset $\pi(\DD_{\mu r})$ of $S^2$.
In particular for large $\mu$ the ball $B_{R_\Si(\mu^{-1})}(b)$ of radius $R_\Si(\mu^{-1})\sim \mu^{-1}\ll 1$ gets scaled up to a hemisphere, while 
for any fixed $R\in (0, R_{\Si}(\half r_0)]$ the sets $(B_R(b),\bar g_{\mu,b})$ are isometric to subsets that exhaust the punctured sphere $S^2\setminus\{\pi(\infty)\}$ as $\mu\to \infty$.

Conversely, these metrics scale down subsets of $\Si\setminus \{b\}$ and we note that bounds of the form 
 \beq\label{est:pmu-bulk}
\bar\rho_{\mu,b}\sim \mu^{-1} \text{ and } \abs{\partial_\mu \bar\rho_{\mu,b}}\sim \mu^{-2} \text{ on } \Si\setminus B_{c}(b)
\eeq
are valid for any fixed  $c>0$ and immediately follow as 
\beq 
\label{est:simple-pi-estimates}
\abs{\na \pi_\mu} 
\sim \tfrac{1}{\mu \abs{x}^2} \quad \text{ while }\quad \pmu(\tfrac{1}{\sqrt{2}} \abs{\na \pi_{\mu}(x)}) = 2 \tfrac{1-\mu^2\abs{x}^2}{(1+\mu^2\abs{x}^2)^2}\sim -\tfrac{1}{\mu^2 \abs{x}^2} \quad \text{ for } \abs{x}\geq 2\mu^{-1}.\eeq

As $r_1=\frac14r_0\leq \half r_0$ we have 
$\phi\equiv 1$ on $\DD_{r_1}$ and hence obtain 

\begin{rmk}
\label{rmk:weighted-metric-new}
On $B_{R_1}(b)$ the metric 
$\bar g_{\mu,b}$ is given $g_{\pi,\mu}=\pi_\mu^*g_{S^2}$ in the usual conformal coordinates, so can be obtained from the fixed metric $g_\pi$ by pulling-back with the dilation $D_\mu: x\mapsto \mu x$. 
At times it will hence be convenient to work in the rescaled cylindrical coordinates $(s=\bar s_{\mu,b},\theta)$, $\bar s_{\mu,b}(p)=\log(\mu \abs{F_b(p)})$ in which $\bar g_{\mu,b}$ is given by 
the metric 
\beq 
\label{eq:stand-cyl-metric} 
 \rho^2 \gcyl \text{ for } \gcyl:= d s^2+d\theta^2  \text{ and  }
\rho(\cdot):=\cosh(\cdot)^{-1}
\eeq
that represents $g_{\pi}$ in standard cylindrical coordinates.
\end{rmk}

Instead of directly working with these metrics, we pull them back by suitably chosen diffeomorphisms 
$T_{\mu,b}: \Sigma \to \Sigma$, thus associating to each point $b\in \Si$ a $1$-parameter family of metrics 
$$\GG_b:=\{g_{\mu,b} := T_{\mu,b}^{*}\bar{g}_{\mu,b}, \quad \mu\geq  \mu_*\}.$$
To make this precise, we consider the family of diffeomorphisms 
$T_\mu: \DD_{r_0} \to \DD_{r_0}$, 
$\mu \geq \mu_*$,
which is generated by 
\beqa
\label{def:Tmu}
\ppeps T_{\mu,\mu+\eps}(x) 
=- \mu^{-1} \phi_\GG(x) x \text{ from } T_{\mu_*}(x)=x, \qquad  T_{\mu,\tilde \mu}:=T_{\tilde \mu}\circ T_{\mu}^{-1},
\eeqa
where $\phi_\GG$ is chosen as $\phi_\GG(x):= 1-\psi(\log(r_1^{-1}\abs{x}))$ for a function $\psi\in C^\infty(\R)$ with 
\beqa
\label{def:psi-g}
 \psi\equiv 0 \text{ on } (-\infty,-4],  \quad \psi(-2)=\mfrac 12,  \quad \psi\equiv 1 \text{ on } [0,\infty)\text{ and } 0\leq \psi'\leq \mfrac13.
\eeqa
This ensures that 
$\phi_\GG(x)\equiv 0$ for $\abs{x}\geq r_1=e^4r_2$ and $\phi_\GG\equiv 1$ on $\DD_{r_2}$ and hence that 
\beq 
\label{ass:Tmu} 
T_\mu =\text{ Id} \text{ on } \DD_{r_0} \setminus \DD_{r_1} \text{ and }
T_\mu(x)= \mu^{-1} \mu_* x \text{ on } \DD_{r_2}.\eeq
The rotational symmetry of $\phi_\GG$ allows us to write $T_\mu(r e^{i\theta})=t_\mu(r) e^{i\theta}$ for functions $t_\mu$ which, by \eqref{def:Tmu}, are non-increasing in $\mu$ 
and we furthermore note that 
\beq \label{eq:mu-t-agree}
\mu T_\mu\equiv \mu_1 T_{\mu_1} \text{ on } \DD_{r_\GG(\mu_1)} \text{ for } 
\mu\geq \mu_1, \quad  \text{ where } r_\GG(\mu_1):= t_{\mu_1}^{-1}(r_2)\eeq
since  $\pmu(\mu t_\mu(r))=\psi(\log(t_\mu(r) r_1^{-1}))t_\mu(r)=\psi(\log(t_\mu(r) r_2^{-1})-4)t_\mu(r)\equiv 0$ whenever $t_\mu(r)\leq r_2$.

\begin{rmk}\label{rmk:nice-coordinates}
$S_\mu(x):=\mu T_\mu(x)=\mu t_\mu(x) e^{i\theta}$  represents an isometry from $(B_{R_1}(b),g_{\mu,b}:= T_{\mu,b}^* \bar g_{\mu,b})$ to $(\DD_{\mu r_1},g_\pi)$ in the usual conformal coordinates 
since $T_{\mu,b}$ maps $B_{R_1}(b)$ onto itself and since $\bar g_{\mu,b}$ is described by $g_{\pi,\mu}=D_\mu^*g_\pi$ on this ball. 
Denoting by $f_\mu(s):=\log(\mu t_\mu(e^s))$ the function which represents $S_\mu$ in cylindrical coordinates, i.e.  which is so that 
$e^{f_\mu(s)}= \mu t_\mu(e^s)$,
 hence yields
coordinates 
\newcommand{\cyl}{\text{cyl}}
$ 
(s_{\mu,b}(p):= f_\mu(\log(\abs{F_b(p)}) ,\theta(p)) $  on $B_{R_1}(b)$ in which $g_{\mu,b} $ is given by $\rho^2g_{\cyl}$, compare \eqref{eq:stand-cyl-metric}. We note that these coordinates correspond to the coordinates $(\bar s_{\mu,b},\theta)$ in the pulled-back viewpoint.  
\end{rmk}
\begin{rmk}\label{rmk:switch-viewpoint}
We often use the diffeomorphisms $T_{\mu,b}$ to 
switch viewpoint, noting that as long as we only consider fixed values of $\mu$ and apply the same diffeomorphism to all involved maps and metrics, such a change of viewpoint does not change any geometrically defined quantities. 
Conversely, quantities that involve variations of $\mu$ such as norms of $\partial_\mu g_{\mu,b}$ and $\pmu \bar g_{\mu,b}$ can scale very differently and indeed the key point of our construction of $\GG_b$ is to obtain a far more efficient way of changing the metrics than what would be possible using conformal changes. 
\end{rmk}

We now show that our particular choice of $T_\mu$ ensures that $\GG_b$ has the following key properties. 

\begin{prop}
\label{prop:metrics-main}
Let $T_{\mu,b}: \Sigma \to \Sigma$ be the family of diffeomorphisms that is represented by the functions $T_\mu$ defined in \eqref{def:Tmu} above in the usual local coordinates on $B_{R_0}(b)$ and  
given by $T_{\mu,b} = Id \text{ on } \Sigma \setminus B_{R_0}(b)$. 
Then the induced family of metrics
\beq
\label{def:G_b}
\GG_b:=\{g_{\mu,b} := T_{\mu,b}^{*}\bar{g}_{\mu,b}, \quad \mu\geq  \mu_*\}, \qquad  \bar g_{\mu,b} \text{ defined by \eqref{def:conformal-factor-constant} and \eqref{def:conformal-factor-vary} } \eeq
 has the following properties. 
 \vspace{-1pc}
\begin{enumerate}
    \item \label{item:metric-for-free}
The metrics agree on balls of increasing radius 
$R_\GG(\mu_1):=
R_{\Si}(t_{\mu_1}^{-1}(r_2))\in (R_2,R_1)$  in $(\Si,g_\Si)$,
\beq \label{claim:metrics-identical}
g_{\mu,b}\equiv g_{\mu_1,b} \text{ on }
B_{R_\GG(\mu_1)}(b) = T_{\mu_1,b}^{-1}(B_{R_2}(b))\text{ for } \mu\geq \mu_1,
 \eeq 
and the resulting limiting metric $$g_{\infty,b}:=\lim_{\mu\to \infty} g_{\mu,b} \text{ on } B_{R_\infty}(b), \quad R_\infty:= \lim_{\mu\to \infty} R_\GG(\mu)$$ is so that 
$( B_{R_\infty}(b),g_{\infty,b})$ is isometric to the punctured sphere
$ (\R^2, g_{\pi}) \cong (S^2\setminus \pi(\infty),g_{S^2})$ via the  diffeomorphism that is represented by $ \lim_{\mu \to \infty} \mu T_\mu:\DD_{r_\infty}\to \R^2$, $r_\infty=r_{\Si}(R_\infty)=\lim_{\mu\to\infty} t_\mu^{-1}(r_2)$,
 in the usual conformal coordinates around $b$, compare \eqref{eq:mu-t-agree}.
\item
\label{metric:item-estimates}
We have bounds of
\beq
\label{est:Met-Var}
\| \pmu g_{\mu,b} \|_{L^2(\Sigma,g_{\mu,b})} \sim \mu^{-2}
\quad \text{ and }\quad \| \pmu g_{\mu,b} \|_{L^{\infty}(\Si,g_{\mu,b})} \sim \mu^{-1}.
\eeq

\item\label{item:metric-almost-monotone}
The metrics are essentially ordered in the sense that  
\beq
\label{eq:metric-almost-monotone}
g_{\mu,b} \leq e^3 g_{ \mu_1,b} \text{ holds  on all of } \Si \text{ and  for all } \mu\geq \mu_1.
\eeq

\item \label{item:metric-v} 
The trace-free part of 
 $\pmu g_{\mu,b}$ is supported on a subset of  $B_{R_1}(b)\setminus B_{R_2}(b)$ and can be written as 
\beq \label{eq:cylinder-tracefree} \mathring{\pmu g_{\mu,b}}=\mu^{-1}\psi'(s_{\mu,b} -X_\mu)\rh^2(s_{\mu,b})  (ds_{\mu,b}^2-d\theta^2)\eeq
in the coordinates $(s_{\mu,b},\theta)\in (-\infty,X_\mu]\times S^1$, $X_\mu:=\log(\mu r_1)$
on $B_{R_1}(b)$ in which $g_{\mu,b}$ is given by 
$\rho^2 g_{\text{cyl}} $, compare Remark \ref{rmk:nice-coordinates}.
\end{enumerate}

\end{prop}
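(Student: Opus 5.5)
The plan is to do all computations in the usual conformal coordinates $F_b$ on $B_{R_0}(b)$, split into the regions $\DD_{r_0}\setminus\DD_{r_1}$, $\DD_{r_1}\setminus \DD_{r_2}$ (in the preimage under $T_\mu$) and $\DD_{r_2}$, and note that outside $B_{R_0}(b)$ everything is explicit. There $g_{\mu,b}=c_\mu^2g_\Si$, so $\pmu g_{\mu,b}=2c_\mu^{-1}\pmu c_\mu\, g_{\mu,b}$. By \eqref{est:simple-pi-estimates} this gives $|\pmu g_{\mu,b}|_{g_{\mu,b}}\sim \mu^{-1}$ pointwise, while the $L^2(g_{\mu,b})$ norm picks up the area factor $c_\mu^2\sim\mu^{-2}$, i.e. a contribution $\sim\mu^{-1}\cdot\mu^{-1}=\mu^{-2}$. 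This fixes the lower bounds in \eqref{est:Met-Var}. On $\DD_{r_0}\setminus\DD_{r_1}$ we have $T_\mu=\id$ by \eqref{ass:Tmu}, and the same computation applies using \eqref{est:pmu-bulk}.

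For \ref{item:metric-for-free} I use \eqref{eq:mu-t-agree}. On $\DD_{r_\GG(\mu_1)}$ we have $S_\mu=\mu T_\mu=\mu_1T_{\mu_1}=S_{\mu_1}$ for $\mu\ge\mu_1$. Since $T_\mu(\DD_{r_\GG(\mu_1)})\subset\DD_{r_2}\subset\DD_{r_1}$, where $\bar g_{\mu,b}=D_\mu^*g_\pi$ by Remark \ref{rmk:weighted-metric-new}, it follows that $g_{\mu,b}=S_\mu^*g_\pi=S_{\mu_1}^*g_\pi=g_{\mu_1,b}$ there. The radii $r_\GG(\mu)=t_\mu^{-1}(r_2)$ are nondecreasing because $t_\mu$ is nonincreasing in $\mu$. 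They stay below $r_1$ because $t_\mu=\id$ near $r_1$ and $t_\mu(r)<r$ inside. The limit map $S_\infty=\lim S_\mu$ exists on $\DD_{r_\infty}$ since it is eventually constant on each smaller disc. It is an isometry onto its image in $(\R^2,g_\pi)$. Surjectivity onto $\R^2$ follows from $S_\mu(\partial\DD_{r_\GG(\mu)})=\partial\DD_{\mu r_2}$ and $\mu r_2\to\infty$. Finiteness and positivity of $r_\infty>r_2$ come from integrating the ODE \eqref{def:Tmu}.

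For \ref{item:metric-v} and the remaining bounds in \ref{metric:item-estimates} I pass to the cylindrical coordinates of Remark \ref{rmk:nice-coordinates}. Differentiating $g_{\mu,b}=T_{\mu,b}^*\bar g_{\mu,b}$ gives
\[
\pmu g_{\mu,b}=T_{\mu,b}^*\big(\pmu\bar g_{\mu,b}+\mathcal L_{Y_\mu}\bar g_{\mu,b}\big),\qquad Y_\mu=-\mu^{-1}\phi_\GG(x)\,x .
\]
The first term is conformal. The trace-free part of the Lie derivative of a conformal metric along the radial field $-\mu^{-1}\phi_\GG(r)\,r\partial_r$ is the trace-free part of $-\mu^{-1} r\phi_\GG'(r)(dr^2/r^2)\cdot\rho^2$. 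In the variable $s$ this becomes $-\mu^{-1}\partial_s\phi_\GG\,\rho^2(ds^2-d\theta^2)$, up to the normalisation of the trace-free part. Since $\phi_\GG=1-\psi(\log(r_1^{-1}|x|))$ and, in the pulled-back coordinates, $\log(r_1^{-1}|x|)=\bar s-X_\mu$, this yields \eqref{eq:cylinder-tracefree} with support where $\psi'\ne0$, i.e. in $B_{R_1}\setminus B_{R_2}$.

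For the trace part on $\DD_{r_1}$ I would observe that $\bar g_{\mu,b}=\rho^2(\bar s)g_{cyl}$ with $\bar s=\log\mu+\log|x|$. The total $\mu$-derivative along the flow of $T$ is then $2\rho\rho'(s)\,\mu^{-1}(1-\phi_\GG)\,g_{cyl}$. This vanishes on $\DD_{r_2}$, consistent with \ref{item:metric-for-free}. On the annulus, where $s\in[X_\mu-4,X_\mu]$, it has $g_{\mu,b}$-size $\lesssim\mu^{-1}$ and area $\sim\rho^2(X_\mu)\sim\mu^{-2}$. Together with the bulk this gives $\|\pmu g\|_{L^2}\sim\mu^{-2}$ and $\|\pmu g\|_{L^\infty}\sim\mu^{-1}$. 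The trace-free term is bounded by $\mu^{-1}|\psi'|\le \tfrac13\mu^{-1}$ and is harmless.

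The main obstacle I expect is \ref{item:metric-almost-monotone}, the ordering $g_{\mu,b}\le e^3g_{\mu_1,b}$. I would prove it by integrating in $\mu$. Writing $\pmu g_{\mu,b}\le h(\mu)\,g_{\mu,b}$ in the sense of quadratic forms, Gronwall gives $g_{\mu,b}\le \exp(\int_{\mu_1}^\mu h)\,g_{\mu_1,b}$, so it suffices to show that the positive part of the largest eigenvalue satisfies $\int h\le 3$. The conformal parts have $\rho'\le0$ for $s\ge0$ and are essentially nonpositive: the metrics shrink in the bulk and the annulus lies at $s\approx X_\mu\gg1$ since $\mu\ge\mu_*$. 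The only positive contribution is the trace-free eigenvalue $\mu^{-1}\psi'\le\tfrac13\mu^{-1}$. A point in a fixed position, however, sits in the support of $\psi'(s_{\mu,b}-X_\mu)$ only for a range of $\mu$. Using $\partial_\mu(s_{\mu,b}-X_\mu)=-\mu^{-1}\psi(\cdot)$, I would change variables to $\sigma=s-X_\mu$ and bound $\int\mu^{-1}\psi'\,d\mu$ by $\int\psi'(\sigma)/\psi(\sigma)\,d\sigma$. This is delicate where $\psi$ is small. There I would instead use the explicit identity on $\DD_{r_\GG}$ from \ref{item:metric-for-free}, plus the bound $\psi'\le\frac13$, to get a total exponent of at most $3$. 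Making this bookkeeping precise is the step I am least sure of.
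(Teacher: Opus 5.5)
Your arguments for (i), (ii) and (iv) are fine. Writing $\pmu g_{\mu,b}=T_{\mu,b}^*(\pmu\bar g_{\mu,b}+\mathcal L_{Y_\mu}\bar g_{\mu,b})$ is equivalent to the paper's computation via the generator $\ppeps f_{\mu,\mu+\eps}=\mu^{-1}\psi(\cdot-X_\mu)$. The gap is in (iii), and it has two sources.

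First, your trace part keeps only the derivative of the conformal factor along the flow. The anisotropic term $2\mu^{-1}\psi'\rho^2\,ds_{\mu,b}^2$ produced by $\mathcal L_{Y_\mu}$ also has trace part $\mu^{-1}\psi'\rho^2 g_{\text{cyl}}$. The correct tensor is therefore
\[ \pmu g_{\mu,b}=2\mu^{-1}\big[\psi'(\sigma)-\tanh(s_{\mu,b})\psi(\sigma)\big]\rho^2\,ds_{\mu,b}^2-2\mu^{-1}\tanh(s_{\mu,b})\psi(\sigma)\rho^2\,d\theta^2,\qquad \sigma:=s_{\mu,b}-X_\mu, \]
and the positive contribution to the top eigenvalue is $2\mu^{-1}\psi'$, not $\mu^{-1}\psi'$. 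This is harmless for (ii).

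Second, and more seriously, your speed formula is wrong. At a fixed point $\pmu s_{\mu,b}=\mu^{-1}\psi(\sigma)$ and $\pmu X_\mu=\mu^{-1}$, so $\pmu\sigma=-\mu^{-1}(1-\psi(\sigma))$, not $-\mu^{-1}\psi(\sigma)$. The slow points are therefore those with $\sigma$ close to $0$, i.e. near $\partial B_{R_1}(b)$ where $\psi\approx 1$. For them
\[ \int\mu^{-1}\psi'\,d\mu=\int\psi'(\sigma)(1-\psi(\sigma))^{-1}\,d\sigma=-\log(1-\psi(\sigma_0)), \]
which blows up as the initial value $\sigma_0\to 0^-$. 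So no argument that discards the conformal part and integrates only the positive $\psi'$-term can give a uniform constant. Your proposed repair via part (i) acts at the opposite end $\sigma\le -4$, where nothing goes wrong.

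The missing idea is that the negative conformal term must absorb the positive one wherever $\psi\ge\frac12$.
\begin{enumerate}
\item Wherever $\psi(\sigma)\neq 0$ one has $s_{\mu,b}\ge X_\mu-4\ge\log(\mu_*r_2)=1$, hence $\tanh(s_{\mu,b})>\frac34$.
\item Together with $\psi'\le\frac13$ and $\psi\ge\frac12$ on $[-2,\infty)$, this makes $2\mu^{-1}[\psi'-\tanh(s_{\mu,b})\psi]$ negative for all $\sigma\ge -2$.
\item In the remaining window $\sigma\in(-4,-2)$ one has $1-\psi\ge\frac12$, so $\pmu(X_\mu-s_{\mu,b})\ge\frac12\mu^{-1}$. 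The top eigenvalue there is at most $\frac23\mu^{-1}\le\frac43\,\pmu(X_\mu-s_{\mu,b})$.
\item Since $\sigma$ is monotone in $\mu$, the window is crossed at most once. The total exponent is therefore at most $\frac43\cdot 2=\frac83\le 3$.
\end{enumerate}
All $g_{\mu,b}$ are diagonal in the fixed coordinates $(\log|F_b|,\theta)$, so your Gronwall step then applies componentwise and yields $g_{\mu,b}\le e^3 g_{\mu_1,b}$. This is precisely where the normalisations $\psi(-2)=\frac12$, $0\le\psi'\le\frac13$ and $\mu_*=e^5r_1^{-1}$ are needed, and your outline never uses them quantitatively.
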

We note that the detailed information on the trace-free part of $\pmu g_{\mu,b}$ is helpful since it allows us to obtain the following explicit expression for the norm of the velocity 
$\pt g=\frac12 P^{T_g\mathcal{G}_b}\left(k(u,g)\right)$ of our flow.
\begin{cor}\label{cor:proj-cylinder}
Let $g=g_{\mu,b} \in \GG_b$ and let $\tilde u$ be the map that represents a given map 
$u \in H^1(\Si,S^2)$ in the coordinates $(s_{\mu,b},\theta)$ from Remark \ref{rmk:nice-coordinates}. 
Then 
\beq
\label{est:proj-cylinder}
\| P^{T_{g} \GG_b}(k(u,g)) \|_{L^2(\Sigma, g)} = c_{b}(\mu)  \left|\int 
\psi'(s-X_\mu) \al(s)ds \right|  \text{ for } \al(\cdot):= \int_{\{\cdot\}\times S^1} \abs{\tilde u_s}^2 - \abs{\tilde u_\theta}^2 d\theta  
\eeq
and $c_b(\mu):=\mu^{-1}\norm{\pmu g_{\mu,b}}_{L^2(\Si,g_{\mu,b})}^{-1}\sim \mu$.
In particular, we can always bound  
\beq \label{est:trivial-norms-gt} 
\| P^{T_{g} \GG_b}(k(u,g)) \|_{L^2(\Sigma, g)} \leqs E(u,g) \mu \quad\text{ while }\quad \| P^{T_{g} \GG_b}(k(u,g)) \|_{L^\infty(\Sigma, g)} \leqs E(u,g) \mu^2.
\eeq
\end{cor}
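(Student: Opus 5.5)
Since $\GG_b$ is a one-parameter family, $T_g\GG_b$ is spanned by $h:=\pmu g_{\mu,b}$, so the orthogonal projection is $P^{T_g\GG_b}(k)=\langle k,h\rangle_{L^2(\Si,g)}\, h/\norm{h}^2_{L^2(\Si,g)}$. Hence
\[
\norm{P^{T_g\GG_b}(k(u,g))}_{L^2(\Si,g)}=\frac{\abs{\langle k(u,g),h\rangle_{L^2(\Si,g)}}}{\norm{h}_{L^2(\Si,g)}}.
\]
The plan is to evaluate this inner product explicitly using Proposition \ref{prop:metrics-main}\ref{item:metric-v}.

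The first step is to note that $k(u,g)=u^*g_{S^2}-\tfrac12\abs{du}_g^2 g$ is trace-free with respect to $g$ (in two dimensions $\mathrm{tr}_g u^*g_{S^2}=\abs{du}_g^2$). Therefore $\langle k,h\rangle=\langle k,\mathring h\rangle$. By \eqref{eq:cylinder-tracefree}, $\mathring h$ is supported in $B_{R_1}(b)$, where we may work in the coordinates $(s,\theta)=(s_{\mu,b},\theta)$ with $g=\rho^2 g_{\rm cyl}$. There $k=\tilde u^*g_{S^2}-\tfrac12(\abs{\tilde u_s}^2+\abs{\tilde u_\theta}^2)g_{\rm cyl}$, whose $ds^2$-component is $\tfrac12(\abs{\tilde u_s}^2-\abs{\tilde u_\theta}^2)$ and whose $d\theta^2$-component is the negative of this.

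Next we compute the pointwise inner product. For $g=\rho^2g_{\rm cyl}$ one has $\langle A,B\rangle_g dv_g=\rho^{-2}\langle A,B\rangle_{g_{\rm cyl}}\,ds\,d\theta$. Pairing $k$ with $\mu^{-1}\psi'(s-X_\mu)\rho^2(ds^2-d\theta^2)$ gives
\[
\langle k,\mathring h\rangle_{L^2(\Si,g)}=\mu^{-1}\int\psi'(s-X_\mu)\int_{S^1}\big(\abs{\tilde u_s}^2-\abs{\tilde u_\theta}^2\big)\,d\theta\,ds=\mu^{-1}\int\psi'(s-X_\mu)\,\al(s)\,ds,
\]
since the factors of $\rho$ cancel and the two diagonal components each contribute $\tfrac12(\abs{\tilde u_s}^2-\abs{\tilde u_\theta}^2)$. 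Dividing by $\norm{h}$ yields \eqref{est:proj-cylinder} with $c_b(\mu)=\mu^{-1}\norm{h}^{-1}$. By \eqref{est:Met-Var} we have $\norm h\sim\mu^{-2}$, so $c_b(\mu)\sim\mu$.

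For the bounds \eqref{est:trivial-norms-gt}, use $0\le\psi'\le\tfrac13$ and $\abs{\al(s)}\le\int_{S^1}\abs{\nabla\tilde u}^2d\theta$. Integrating in $s$ gives $\abs{\int\psi'\al}\le\tfrac23E(u,g)$, since the cylindrical Dirichlet energy is conformally invariant; this yields the $L^2$ bound $\lesssim\mu E$. For the $L^\infty$ bound, the projection equals $\frac{\langle k,h\rangle}{\norm h^2}h$, so its $L^\infty$ norm is at most
\[
\frac{\mu^{-1}E\cdot\norm{h}_{L^\infty}}{\norm h^2}\lesssim \mu^{-1}E\,\mu^{-1}\mu^{4}=\mu^2E,
\]
using $\norm h_{L^\infty}\sim\mu^{-1}$ from \eqref{est:Met-Var}.

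There is no real obstacle here. The only care needed is in the coordinate bookkeeping: the norm of $ds^2-d\theta^2$, the cancellation of the factors $\rho^{\pm2}$ and $\rho^2$, and the factor $\tfrac12$ in $k$. These must come out so that no stray constant appears, or else any constant is absorbed into $c_b$; in either case $c_b$ is determined by $\norm{h}$ alone.
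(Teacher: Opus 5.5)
Your proposal is correct and follows the paper's proof. One-dimensionality of $\GG_b$ and trace-freeness of $k(u,g)$ reduce the norm to $\norm{\pmu g_{\mu,b}}_{L^2(\Si,g)}^{-1}\abs{\langle k(u,g),\mathring{\pmu g_{\mu,b}}\rangle_{L^2(\Si,g)}}$, and \eqref{eq:cylinder-tracefree} then gives the formula with the $\rho$-factors cancelling exactly, so your hedge about stray constants is unnecessary. The only cosmetic difference is in the $L^\infty$ bound: the paper estimates $\abs{\langle k,\pmu g_{\mu,b}\rangle}\le\norm{k}_{L^1(\Si,g)}\norm{\pmu g_{\mu,b}}_{L^\infty(\Si,g)}$ instead of reusing the explicit formula, and both routes give $\mu^2E(u,g)$.
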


\begin{proof}[Proof of Proposition \ref{prop:metrics-main}]
As 
$S_\mu(x)=\mu T_\mu(x)$  represents an isometry from $(B_{R_1}(b),g_{\mu,b})$ to $(\DD_{\mu r_1},g_\pi)$ the first statement immediately follows from \eqref{eq:mu-t-agree}.

It is also easy to see that $g_{\mu,b}$
has the required properties on 
 $\Si_0:=\Sigma \setminus B_{R_{\Si}(r_1)}(b)$ where it agrees with 
$\bar g_{\mu,b}=\bar \rho_{\mu,b}^2 g_\Si$. Indeed, the (almost) monotonicity required in \eqref{eq:metric-almost-monotone} trivially holds as
$\mu\mapsto \bar \rho_{\mu,b}$ is non-increasing
and \eqref{est:pmu-bulk} ensures that 
 $\abs{\pmu g_{\mu,b}}_{g_{\mu,b}} = 2 \sqrt{2} (\bar\rho_{\mu,b})^{-1} \abs{\pmu \bar\rho_{\mu,b}}\sim \mu^{-1}$ on $\Si_0$ and that $\Area(\Si_0,\bar g_{\mu,b})\sim \mu^{-2}$ which hence yields
$ \| \pmu g_{\mu,b} \|_{L^2(\Si_0
,g_{\mu,b})}\sim \mu^{-2}.$ 

For the rest of the proof we can hence focus on the analysis on $B_{R_1}(b)$ and for \ref{metric:item-estimates} on deriving upper bounds on the norms of $\pmu g_{\mu,b}$.
We will carry out the proofs of \ref{metric:item-estimates} and \ref{item:metric-v} in the coordinates 
$ (s_\mu=s_{\mu,b},\theta)$ in which we can write \beqs
g_{\tilde \mu,b}= (\rho\circ f_{\mu,\tilde \mu})^2([(
f_{\mu,\tilde \mu}')^2 ds_\mu^2 + d\theta^2] \quad \text{ for any $\tilde \mu$.}
\eeqs
 As $e^{f_\mu(s)}= \mu t_\mu(e^s)$ we can write the corresponding transition maps
$f_{\mu,\tilde \mu}:=f_{\tilde \mu}\circ f_\mu^{-1}$ as 
$$ f_{\mu,\tilde \mu}(s)=\log(\tilde \mu t_{\mu,\tilde \mu}(\mu^{-1} e^s))=\log(\tilde \mu)+ \log( t_{\mu,\tilde \mu}(\mu^{-1} e^s)).$$
As 
$\peps \log( t_{\mu, \mu+\eps}(\mu^{-1} e^s))=\mu e^{-s}\peps t_{\mu,\mu+\eps}(\mu^{-1} e^s)=-\mu^{-1} \phi_\GG(\mu^{-1}e^s)$
this implies that the functions $f_\mu$ are generated by 
$$\peps f_{\mu,\mu+\eps}(s)=Y_\mu(s) :=\mu^{-1} \psi(s-X_\mu) \text{ for } X_\mu:= \log (\mu r_1). $$
As $\rho'=-\tanh \cdot \rho$ we hence obtain that 
\beqa
\label{def:pmu-gmu-cylindrical}
 \pmu g_{\mu,b}&=2\rho(s_\mu)  (\rho'(s_\mu)Y_\mu(s_\mu) + \rho(s_\mu) 
Y_\mu'(s_\mu))ds_\mu^2 + 2\rho(s_\mu)\rho'(s_\mu)Y_\mu(s_\mu) d\theta^2  \\
&=2 (Y_\mu'(s_\mu)-\tanh(s_\mu)\cdot Y_\mu(s_\mu))\rho^2(s_\mu) ds_\mu^2-
2\tanh(s_\mu)Y_\mu(s_\mu) \rho^2(s_\mu) d\theta^2\\
&=2 \mu^{-1}[\psi'(s_\mu-X_\mu)-\tanh(s_\mu)\cdot \psi(s_\mu-X_\mu)]\rho^2(s_\mu) ds_\mu^2\\
&\qquad -
2\mu^{-1} \tanh(s_\mu)\psi(s_\mu-X_\mu)
\rho^2(s_\mu) d\theta^2.
\eeqa 
From this we can read off the formula for $\mathring{\pmu g_{\mu,b}}$ claimed in \ref{item:metric-v} as well as that $\norm{\pmu g_{\mu,b}}_{L^\infty(B_{r_1}(b), g_{\mu,b})}
\leqs \mu^{-1}$. 
Since $\pmu g_{\mu,b}\equiv 0$ on $B_{{R_\GG}(\mu)}(b)$, which corresponds to points with $s_\mu\leq f_\mu(\log(r_{\GG}(\mu))=\log(\mu r_2)=X_\mu-4$, we have  
$\Area_{g_{\mu,b}}(\supp(\pmu g_{\mu,b})\cap B_{R_1}(b))\leq 2\pi \int_{X_\mu-4}^{\infty} \rho^2(s_\mu)ds_\mu \leqs \int_{\log(\mu r_2)}^\infty e^{-2s_\mu}ds_\mu\leqs \mu^{-2}$,
allowing us to bound
$ \norm{\pmu g_{\mu,b}}_{L^2(B_{R_1}(b), g_{\mu,b})}\leqs \mu^{-2}$. This completes the proof of part \ref{metric:item-estimates}.

It hence remains to establish \ref{item:metric-almost-monotone} at points $p$ which are in $B_{R_1}(b)$ and for this we work in the standard  cylindrical coordinates $(s=\log(\abs{F_b(p)},\theta)$. 
Fixing any such point $p$ and the corresponding $s\in (-\infty, \log(r_1)]$ we can now use that \eqref{def:pmu-gmu-cylindrical} implies that 
\beq
\label{est:metric-pmu-1}
\partial_\mu g_{\mu,b}(p)   \leq v_\mu(s)g_{\mu,b}(p) 
\text{ for } 
 v_\mu(s) =2\mu^{-1} \big[ \psi'(f_\mu(s)-X_\mu)-\tanh(f_\mu(s))\psi(f_\mu(s)-X_\mu)\big].
 \eeq
We of course have $v_\mu(s)=0$ for all $\mu$ for which  $f_\mu(s)\leq X_\mu-4$  for the given $s$ since $\psi$ is supported on $[-4,\infty)$. 
For all other $\mu$ we can use that $X_\mu-4\geq X_{\mu_*}-4=\log(\mu_*r_2)=1$ to bound  $\tanh(f_\mu(s))\geq \tanh(1)>\frac34$. As $0\leq \psi'\leq \frac13$, this ensures that  
\beq \label{est:vmu}
v_\mu(s)\leq 2\mu^{-1} \big[\mfrac13 -\mfrac34\psi(f_\mu(s)-X_\mu)\big] \quad \text{for all $\mu$ with } f_\mu(s)-X_\mu\geq -4.
\eeq
As $\psi\geq\half$ on $[-2,\infty)$ this quantity is negative whenever $f_\mu(s)\geq X_\mu-2$, so we obtain that 
$$\pmu g_{\mu,b}(p)\leq 0 \text{ for all $\mu$ with } f_\mu(s)-X_\mu \notin (-4,-2).$$
Hence \ref{item:metric-almost-monotone} follows once we show that 
$$g_{\mu_2,b}(p)\leq e^3g_{\mu_1,b}(p) \text{ for all } \mu_1\leq \mu_2 \text{ for which } f_{\mu_i}(s)-X_{\mu_i}\in (-4,-2)\text{ for } i=1,2.$$
Since
$\pmu(X_\mu-f_{\mu})= \mu^{-1}-Y_\mu\circ f_\mu\geq 0$ we note that 
 $f_{\mu}(s)-X_\mu\in (-4,-2)$ indeed holds on the whole interval $ [\mu_1,\mu_2]$. This allows us to bound 
$Y_\mu(f_\mu(s))\leq \half \mu^{-1}$  and hence  $\pmu(X_\mu-f_{\mu}(s))\geq \half \mu^{-1}$ on all of $[\mu_1,\mu_2]$. 
Combined with \eqref{est:vmu}
this ensures that 
$$v_\mu(s)\leq \mfrac23\mu^{-1}\leq \mfrac{4}{3} \pmu(X_\mu-f_{\mu}(s))\text{ for all } \mu\in [\mu_1,\mu_2]\text{ and hence } \int_{\mu_1}^{\mu_2} v_\mu(s) d\mu \leq \mfrac{8}3\leq 3.$$ 
This immediately gives the desired estimate $g_{\mu_2}(p)\leq e^3  g_{\mu_1}(p)$ as \eqref{est:metric-pmu-1} ensures that  
$$\pmu \log(g_{\mu,b}(p)(w,w)) \leq v_\mu(s(p)) \quad \text{ for every } \quad  w\in T_p\Si. $$
\end{proof}
\begin{proof}[Proof of Corollary \ref{cor:proj-cylinder}]
Since 
$\GG_b$ is one-dimensional and since $k(u,g)$ is trace-free while $\supp(\mathring{\pmu g_{\mu,b}})\subset B_{R_1}(b)\setminus B_{R_2}(b)$ we have 
\beqs
\| P^{T_{g} \GG_b}(k(u,g)) \|_{L^2(\Sigma, g)} = 
\norm{\pmu g_{\mu,b}}_{L^2(\Si,g)}^{-1} 
\abs{\langle \mathring{\partial_\mu g_{\mu,b}}, k(u, g) \rangle_{L^2(B_{R_1}(b)\setminus B_{R_2}(b), g)}}.
\eeqs
The expression for $\mathring{\partial_\mu g_{\mu,b}}$ in the coordinates $(s=s_{\mu,b},\theta)$ given in \eqref{eq:cylinder-tracefree} and the fact that the $ds^2-d\theta^2$ part of $k(\tilde u,\rho^2\gcyl)$ is given by 
$\frac12( \abs{\tilde u_s}^2-\abs{\tilde u_\theta}^2)(ds^2-d\theta^2)$ immediately gives the claimed formula \eqref{est:proj-cylinder}. 
The first bound in \eqref{est:trivial-norms-gt} then immediately follows  while the second holds since $\norm{k(u,g)}_{L^1(\Si,g)}\leqs E(u,g)$ and since we can bound
$\norm{P^{T_{g} \GG_b}(k)}_{L^\infty(\Si,g)}\leq \norm{k}_{L^1(\Si,g)}\frac{\norm{\pmu g}_{L^\infty(\Si,g)}^2}{\norm{\pmu g}_{L^2(\Si,g)}^2}\leqs \mu^2  \norm{k}_{L^1(\Si,g)}$ for any tensor $k$.
\end{proof}

\section{\Loj Estimates for
maps with small energy defect}\label{sec:Loj-Est}

Before we state our main \Loj estimates, we  collect some basic properties of maps which will be used throughout this and the following sections. 

To begin with, we recall that harmonic maps $\om:\R^2\to S^2$ with $\deg(\om)=\pm 1$ can be represented by $x\mapsto \RR\pi(\la J(x-x_0))$ in stereographic coordinates, for $\RR\in SO(3)$, $x_0\in \R^2$ and $\la > 0$, where $J$ is chosen as the identity if $\deg(\om)=1$, respectively a reflection if $\deg(\om)=-1$.
We also remark that $x_0$ and $\la > 0$ can be uniquely characterised by the condition that 
\beq
\label{fact:rotation}
E(\om,\DD_{\la^{-1}}(x_0))=\sup_{y\in \DD_{\la^{-1}}} E(\om,\DD_{\la^{-1}}(y))=2\pi
\eeq
a fact which can be easily checked, e.g. by using the explicit expression for
 $\abs{\na \pi(x)} = \frac{2\sqrt{2}}{1+\abs{x}^2}$ to see that 
$\partial_{\eta}\vert_{\eta=0} E(\pi,\DD_r((1+\eta)x))<0$ for any $r>0$ and $x\neq 0$.

We will also repeatedly use the following remark on the behaviour of maps with bounded tension, which is an immediate consequence of the 
compactness theory of almost harmonic maps developed in \cite{Ding-Tian, Lin-Wang,Qian-Tian,Struwe}.

\newcommand{\loc}{\text{loc}}
\begin{rmk}
\label{rmk:basic-compactness}
Let $(M,g)$ be any closed surface and let $v_n\in H^2(M,S^2)$ be any sequence of maps with 
$$\sup_{n} E(v_n)<8\pi \text{ and } \sup_{n} \norm{\tau_g(v_n)}_{L^2(M,g)}<\infty.$$
   Then, after passing to a subsequence, one of the following statements holds. \vspace{-1pc}
   \begin{itemize}
       \item $v_n$ converges weakly in $H^2_{\loc}(M,g)$, and hence in particular strongly in $H^1_\loc\cap L^\infty_{\loc}$, to a limit $v_\infty$.
       \item $v_n$ converges weakly in $H^2_{\loc}(M\setminus \{b\},g)$ to a limit $v_\infty$ away from a point $b\in \Si$ at which a single degree $\pm 1$ bubble forms; that is, there exist $\la_n\to \infty$, $x_n\to 0$, $\RR\in SO(3)$ and $J$ as above so that the maps $v_{n,b}$ which represent $v_n$ in the usual conformal coordinate chart $F_b:B_{I}(b)\to \DD_{\iota}$ are so that 
       $$v_{n,b}(x)-(\RR\pi(\la_n J(x-x_n)) - \RR \pi(\infty))-v_{\infty,b}\to 0 \text{ strongly in } H^1\cap L^\infty (\DD_{\iota}).$$
   \end{itemize}
\vspace{-1pc}
If $\norm{\tau_g(v_n)}_{L^2(M,g)}\to 0$ then the above statements hold with strong, rather than weak, $H^2$ convergence and the limiting map $v_\infty$ will be harmonic. 
\end{rmk}

\begin{rmk}
\label{rmk:no-SED-harmonic}
Since the Riemann-Hurtwiz theorem precludes the existence of a degree $1$ map from $\Si$ to $S^2$ with energy $4\pi$, it follows from \cite[Theorem 1.1 and Corollary 1.3]{R-Loj-1} that there exists $\de_* =\de_*(g_\Si)> 0$ such $E(u) \geq 4\pi + \de_*$ holds for all degree $1$ harmonic maps $u: \Si \to S^2$.
\end{rmk}

These remarks in particular ensure that for degree $1$  maps $u_n:\Si\to S^2$ whose energy defect tends to zero and whose tension remains bounded, all of the energy
will concentrate near a single point. 
Indeed, this statement remains valid also for maps with unbounded tension, namely we have
\begin{lemma}\label{lemma:SED-concentrate}
For all $R \in (0,I)$ and all $c>0$, there exists $\de_1 > 0$ such that 
\beq
\label{est:SED-concentrate}
\sup_{b\in\Si}  E(v,B_{R}(b))  \geq 4\pi - c,
\eeq
for all degree $1$ maps $v \in H^1(\Si,S^2)$ with $\de_v \leq \de_1$.
\end{lemma}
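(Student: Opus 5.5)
We argue by contradiction. Suppose the claim fails for some $R\in(0,I)$ and $c>0$. Then there are degree $1$ maps $v_n\in H^1(\Si,S^2)$ with $\de_{v_n}\to 0$ but $\sup_{b}E(v_n,B_R(b))<4\pi-c$ for every $n$. By density of smooth maps in $H^1(\Si,S^2)$ (Schoen--Uhlenbeck) we may assume each $v_n$ is smooth, changing $E$ by an arbitrarily small amount.

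The difficulty is that $\tau(v_n)$ need not be bounded, so Remark \ref{rmk:basic-compactness} does not apply directly. The plan is to regularise each $v_n$ into a map with controlled tension. Concretely, run the harmonic map flow on $(\Si,g_\Si)$ starting at $v_n$ for a short time $t_n>0$, chosen so that
\beqs
\int_0^{t_n}\norm{\pt u}_{L^2}^2\,dt\le \de_{v_n}
\eeqs
still forces $\norm{\tau(u_n(t))}_{L^2}$ to be small at some time in $[t_n/2,t_n]$. The cleanest way is to take $t_n:=\de_{v_n}^{1/2}$. The energy identity $\int_0^{t}\norm{\tau}^2=E(v_n)-E(u_n(t))\le\de_{v_n}$ holds as long as the flow stays smooth. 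Since $E(u_n(t))\ge 4\pi$ for degree $1$ maps and $E\le 4\pi+\de_{v_n}<8\pi$, a finite-time singularity would drop the energy by at least $4\pi$, which is impossible for small $\de_{v_n}$. So the flow is smooth and of degree $1$ on $[0,t_n]$. We then pick $s_n\in[t_n/2,t_n]$ with $\norm{\tau(u_n(s_n))}_{L^2}^2\le 2\de_{v_n}/t_n\to 0$, and set $w_n:=u_n(s_n)$.

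Next we check that $w_n$ and $v_n$ have comparable local energies on balls of radius slightly smaller than $R$. The local energy inequality for the harmonic map flow (Struwe) gives
\beqs
E(w_n,B_{R/2}(b))\le E(v_n,B_{R}(b))+C\frac{s_n}{R^2}E(v_n),
\eeqs
and the error term tends to zero since $s_n\le t_n\to 0$. Hence $\sup_b E(w_n,B_{R/2}(b))\le 4\pi-c/2$ for large $n$. This is no loss for the statement: proving concentration at the smaller radius $R/2$ would contradict the assumption at radius $R$ anyway. So it suffices to reach a contradiction for $w_n$.

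Now Remark \ref{rmk:basic-compactness} applies to $w_n$, since $E(w_n)<8\pi$ and $\tau(w_n)\to0$, giving strong $H^2$ convergence away from at most one bubble point. In the case without a bubble, $w_n\to w_\infty$ strongly in $H^1$, and $w_\infty$ is a degree $1$ harmonic map with $E(w_\infty)=4\pi$. This contradicts Remark \ref{rmk:no-SED-harmonic}. In the case with a single bubble of energy $4\pi$ at a point $b$, the limit $w_\infty$ is harmonic with $E(w_\infty)=\lim E(w_n)-4\pi=0$, so it is constant. Since the bubble convergence in Remark \ref{rmk:basic-compactness} is strong in $H^1$, all energy of $w_n$ lies, up to $o(1)$, in $B_{R/2}(b)$. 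Thus $E(w_n,B_{R/2}(b))\to4\pi$, contradicting the bound $\le 4\pi-c/2$.

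The main obstacle is the regularisation step: controlling the loss of local energy along the short-time flow uniformly in $n$, and excluding singularities. The energy gap $8\pi-4\pi$ handles the latter. Should the flow-based approach prove technically awkward, an alternative is a direct concentration-compactness argument on $v_n$ using the area lower bound $E\ge$ Area with multiplicity, but the flow route seems shortest.
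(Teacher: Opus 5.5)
Your route is the same as the paper's: short-time harmonic map flow on the fixed surface, the energy identity to find a time with small tension, a local energy inequality to transfer local energies back to $v_n$, and then Remark~\ref{rmk:basic-compactness} with Remark~\ref{rmk:no-SED-harmonic}. The compactness argument at the end is fine.

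\textbf{The gap.} The step that excludes singularities on $[0,t_n]$ fails. You argue that a finite-time singularity would drop the energy by $4\pi$, and that this is impossible. It is not impossible. The bubble removes energy $4\pi$, and the weak limit $u_n(T_{\max})$ is then a degree~$0$ map with energy at most $\delta_{v_n}$. That is consistent with every constraint you have, because the degree is not preserved across a singular time. The bound $E<8\pi$ only rules out a \emph{second} bubble. The paper explicitly allows $T_{\max}<\infty$ for this flow. So if the flow blows up before time $t_n=\delta_{v_n}^{1/2}$, your $s_n$ and $w_n$ are not defined, and the argument breaks.

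\textbf{The repair.} A singularity is actually the favourable case. If $T_{\max}\le t_n$, Struwe's blow-up analysis gives a point $x_0$ and times $t_k\uparrow T_{\max}$ with $\liminf_k E(u_n(t_k),B_{R/2}(x_0))\ge 4\pi$, since a nonconstant harmonic sphere carries energy at least $4\pi$. Your local energy inequality at time $t_k\le t_n$ then gives $E(v_n,B_R(x_0))\ge 4\pi-o(1)$, which contradicts the assumption.

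\textbf{How the paper avoids the case distinction.} It stops the flow at the first time $T_1$ at which $\sup_b E(u(t),B_{R/2}(b))\ge 4\pi-\tfrac c2$. Such a time exists before $T_{\max}$ precisely because the flow must become singular, at finite or infinite time. On $[0,T_1)$ the compactness statement forces $\|\tau_{g_\Sigma}(u)\|_{L^2}>1$, so the energy identity gives $T_1\le\delta_v$. The local energy estimate then finishes the proof directly, without a contradiction argument.
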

For the proof of this lemma we recall that the energy decays according to $\ddt E(u)=-\norm{\pt u}_{L^2(\Si,g_\Si)}^2$
along solutions $u(t)$ of the classical harmonic map flow $\pt u=\tau_{g_\Si}(u)$ on the \textit{fixed} surface $(\Si,g_\Si)$, and that local energy estimates of the form  
\beqa \label{est:loc-energy-conc}
E(u(t_2),B_{\frac{R}{2}}(b)) &\leq E(u(t_1),B_{R}(b)) + CR^{-1}  \int_{t_1}^{t_2}\norm{\pt u}_{L^2(\Si,g_\Si)} dt\\
&\leq E(u(t_1),B_{R}(b))+ C R^{-1} (E(t_1)-E(t_2))^{\half}(t_2-t_1)^{\half}
\eeqa are valid for a constant $C$ that only depends on an upper bound $\bar E$ on the initial energy.

\begin{proof}[Proof of Lemma \ref{lemma:SED-concentrate}]
As observed above, we can choose $\tilde \de_1>0$ so that the stronger bound of 
\beq
\label{est:SED-concentrate-1}
\sup_{b\in\Si}  E(u,B_{R/2}(b), g_\Si)  \geq 4\pi - \tfrac{c}{2},
\eeq holds 
for all degree $1$ maps $u$ with 
 $\de_u\leq \tilde \de_1$ which additionally satisfy 
$\| \tau_{g_\Si}(u) \|_{L^2(\Si,g_\Si)} \leq 1$.
We now set 
$\de_1:=\min(\tilde \de_1,\frac{c R}{2C},\pi,\de_*)$, $\de_*>0$ as in Remark \ref{rmk:no-SED-harmonic} and $C$ so that  \eqref{est:loc-energy-conc} holds for $\bar E=5\pi$.

Given $v$ as in the lemma, we now let $u(t)$ be the corresponding solution of the harmonic map flow and let $T_{max}\in (0,\infty]$ be the maximal time until which the flow exists and remains smooth.  
Since $v$ is not homotopic to a harmonic map, we know that $u(t)$ must become singular as $t\upto T_{\max}$, irrespective of whether or not $T_{\max}$ is finite. Therefore,  \eqref{est:SED-concentrate-1} will hold eventually, allowing us to consider the 
first time $T_1\in[0,T_{\max})$ for which it is valid. 

If $T_1=0$ then the claim trivially holds. Otherwise, we can use that 
 the definition of $ \tilde \de_1$ and the fact that  $\de_{u(t)}\leq \de_v\leq  \de_1\leq \tilde \de_1$ ensure that 
 $ \| \tau_{g_\Si}(u(t)) \|_{L^2(\Si,g_\Si)}>1$ at all times where 
 \eqref{est:SED-concentrate-1} is violated, so in particular on all of $[0,T_1)$. This allows us to bound
$$T_1 \leq \int_0^{T_1} \| \tau_{g_\Si}(u) \|_{L^2(\Si,g_\Si)}^2 dt =E(v)-E(u(T_1))=\de_v-\de_{u(T_1)}\leq \de_1,
$$ 
which, when inserted into \eqref{est:loc-energy-conc}, 
allows us to bound 
\beqas 
4\pi-\tfrac{c}{2}\leq \sup_{b\in\Si}  E(u(T_1),B_{R/2}(b)) & \leq \sup_{b \in \Si} E(v,B_{R}(b)) + \tfrac{C}{R} \de_1 \leq \sup_{b \in \Si} E(u,B_{R}(b))+\tfrac{c}{2},
\eeqas
and hence deduce the claimed estimate \eqref{est:SED-concentrate}.
\end{proof}
\begin{rmk}
\label{rmk:SED-concentrate-bubble}
We can in particular apply this lemma for $c = 2\pi$ to deduce that for any degree $1$ map $v$ with small energy defect there exist $\mu_0\geq 4\mu_*$ and $b\in\Si$ so that \eqref{eqn:exact-bubble} is satisfied. 
\end{rmk}
While we will use this specific choice of parameters later to associate to $v$ an initial metric $G_0=\bar g_{\mu_0,b}$ for our flow, for the analysis of (degree $1$) maps $\bar u$ with small energy defect it suffices to ensure that we are always working with metrics $\bar g_{\mu,b}$ which are compatible with the energy distribution of $\bar u$ in the sense of \eqref{ass:Loj-thm} below. We note that unlike \eqref{eqn:exact-bubble}, this weaker condition \eqref{ass:Loj-thm} will remain valid along our flow, hence allowing us to control the evolution of the flow using the following key \Loj estimate.

\begin{thm}\label{thm:Loj-new}
For every closed orientable surface $(\Si,g_\Si)$ of positive genus, there exist constants $\delta_1 > 0$, $\eps_{2}>0 $ and $C$ so that the following holds true.

Let $\bar u \in H^1(\Si,S^2)$ be any degree $1$ map with $\de_{\bar u}  \leq \delta_1$ and let $\mu\geq \mu_*$ and $b\in \Si$ be so that 
\beq
\label{ass:Loj-thm}
E(\bar{u},B_{R_{\Si}(\frac{1+\eps_2}{\mu})}(b))
\geq 2\pi - \eps_2 \text{ and } \max_{d_{g_\Si}(\tilde b,b)\leq R_{\Si}(\mu^{-1}) }E(\bar{u},B_{R_{\Si}(\frac{1-\eps_2}{\mu})}(\tilde b)) \leq 2\pi + \eps_2.
\eeq
Then, setting  $ u:= T_{\mu,b}^*\bar u$, we can bound 
\beq
\label{est:LE-energy}
\abs{E(\bar{u}) - 4\pi} \leq C \norm{\na \EE_b(u,g_{\mu,b})}_{L^2(\Si,g_{\mu,b})}^2 
\eeq and 
\beq\label{est:LE-mu}
\mu^{-1}\leq C  \norm{\na \EE_b(u,g_{\mu,b})}_{L^2(\Si,g_{\mu,b})},
\eeq
and there exist $\la \sim \mu$, $a\in \Si$ and $\RR\in SO(3)$ so that $\bar u_a:= \bar u\circ F_a^{-1}$ satisfies 
\beqa
\label{est:dist-bound-1}
 \| \bar{u}_a - \RR \pi_{ \la}\|_{H^1(\DD_{\iota}, \pi_{\la}^*g_{S^2})}
 +  \| \bar{u} - \RR \pi(\infty)\|_{H^1(\Si\setminus B_{I}(a), c_{\la}^2 g_{\Si})}\leq C \norm{\na \EE_b(u,g_{\mu,b})}_{L^2(\Si,g_{\mu,b})}.
\eeqa
\end{thm}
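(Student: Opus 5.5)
The plan is to reduce Theorem \ref{thm:Loj-new} to a \L{}ojasiewicz estimate that holds in a neighbourhood of the (infinitely concentrated) critical set, in the spirit of \cite{MRS}. This is the neighbourhood version announced as Theorem \ref{thm:Loj-NHD}. The reduction is by contradiction and compactness. Write $\eta:=\norm{\na \EE_b(u,g_{\mu,b})}_{L^2(\Si,g_{\mu,b})}$ and change viewpoint via $T_{\mu,b}$ (Remark \ref{rmk:switch-viewpoint}). Then $\eta$ controls two things at once: $\norm{\tau_{\bar g_{\mu,b}}(\bar u)}_{L^2(\Si,\bar g_{\mu,b})}$, and, through Corollary \ref{cor:proj-cylinder}, the weighted Hopf-type quantity $c_b(\mu)\abs{\int \psi'(s-X_\mu)\al(s)\,ds}$ on the neck region $B_{R_1}(b)\setminus B_{R_2}(b)$.

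The first step is a qualitative statement. Suppose $\de_{\bar u_n}\to 0$, the pairs $(\mu_n,b_n)$ satisfy \eqref{ass:Loj-thm}, and $\eta_n\to0$ (or $\eta_n$ stays bounded). I claim that, in the rescaled coordinates $S_{\mu_n}$ on $B_{R_1}(b_n)$, the maps $u_n$ converge strongly in $H^1$ to a rotation $\RR\pi$ on the hemisphere-normalised bubble. I also claim that the part of $\bar u_n$ on $\Si\setminus B_{R_1}(b_n)$, with respect to $\bar g_{\mu_n,b_n}$, carries vanishing energy. To see this, use Lemma \ref{lemma:SED-concentrate} to put almost all of the energy into one small ball. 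Condition \eqref{ass:Loj-thm}, together with the characterisation \eqref{fact:rotation}, fixes both the scale $\mu$ and the centre $b$ up to bounded factors. Remark \ref{rmk:basic-compactness}, applied on the domain $(\Si,\bar g_{\mu,b})$, which is almost a round sphere near $b$, then rules out further bubbles, since the energy is below $8\pi$.

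The step I expect to be the main obstacle is excluding neck degeneration: energy or nontrivial image curves could escape into the long cylinder between the bubble scale and $B_{R_1}$ without being seen by the tension. Here I would use the projected metric component as in Remark \ref{rmk:excluding-necks}. Since $\tau\to0$ on the cylinder, the quantity $\al(s)$ is almost constant in $s$, because the Hopf differential is almost holomorphic. Hence smallness of $\int\psi'\al$ forces $\al\approx0$, and this together with small tension excludes necks in the image.

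With this compactness in hand, the second step is to check that $(u,g_{\mu,b})$ lies in the neighbourhood where Theorem \ref{thm:Loj-NHD} applies, i.e. that it is close to a configuration made of $\RR\pi$ on the bubble plus a constant. That theorem then gives \eqref{est:LE-energy}, with the exponent $2$ reflecting integrability. The bound \eqref{est:LE-mu} should come from a linear-term computation: the interaction between the bubble and the nonexistence of degree-1 meromorphic maps on positive genus. Concretely, the test variation $\pmu g_{\mu,b}$ paired with $k(u,g)$ gives $\abs{\langle k,\pmu g\rangle}\gtrsim \mu^{-3}$ after normalisation by $\norm{\pmu g}\sim\mu^{-2}$. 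Equivalently, the defect of the best approximating configuration is $\gtrsim \mu^{-2}$, and one can combine this with \eqref{est:LE-energy}.

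The final step is \eqref{est:dist-bound-1}. Here I would take $a,\la,\RR$ from the nearest critical configuration given by the \L{}ojasiewicz neighbourhood analysis. In that analysis the distance in the weighted $H^1(\Si,\bar g_{\mu,b})$ norm is controlled linearly by the gradient, thanks to nondegeneracy of the bubble modulo the kernel generated by symmetries and by $\mu$. Then I would transfer to the stated norms using $\la\sim\mu$, the comparability of $\bar g_{\mu,b}$ with $\pi_\la^*g_{S^2}$ on $\DD_\iota$, and the comparability with $c_\la^2g_\Si$ outside $B_I(a)$; both follow from \eqref{est:pmu-bulk}. If the neighbourhood version of the estimate is not available at that level of generality, I would instead argue by contradiction on the normalised differences $(u_n-\om_n)/\eta_n$ and use the linearised operator.
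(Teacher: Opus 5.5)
Your proposal follows the paper's route: the estimates are trivial when the gradient norm is bounded below, and otherwise the compactness Lemma~\ref{lemma:derive-thm} places $\bar u$ in the regime of Theorem~\ref{thm:Loj-NHD}; that lemma gives rescaled convergence to $\RR\pi$ and excludes image necks, because the $\bar g_{\mu,b}$-weighted tension lets $\al(s)$ vary by only $O(e^{-s}\tau)$ while the projected metric gradient forces $\mu\abs{\al(X_\mu)}$ to be small. Note that Theorem~\ref{thm:Loj-NHD} already gives all three of \eqref{est:LE-energy}, \eqref{est:LE-mu}, \eqref{est:dist-bound-1}, so your separate arguments for the last two are superfluous; also, the contradiction sequence needs gradient norms tending to zero (boundedness is not enough) and $\eps_2=\eps_n\to0$, so that \eqref{fact:rotation} pins the limit down as exactly $\RR\pi$ at scale $\mu_n$ and centre $b_n$ --- identification only ``up to bounded factors'' would not give the $\eps_1$-closeness to $\RR_0\pi_\mu$ demanded by \eqref{ass:LE-map-1}.
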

Here $T_{\mu,b}, \bar g_{\mu,b}$ and $g_{\mu,b}=T_{\mu.b}^*\bar g_{\mu,b}\in \GG_b$ are the diffeomorphisms and metrics constructed in Section \ref{sec:metrics} and $\EE_b$ denotes the restriction of the Dirichlet energy onto $H^1\times \GG_b$. The norm appearing on the right hand side of the above inequalities can hence be equivalently computed as
\beq \label{def:EG} 
\norm{\na \EE_b(u,g_{\mu,b})}_{L^2(\Si,g_{\mu,b})}^2
= \| \tau_{\bar{g}_{\mu,b}}(\bar{u}) \|_{L^2(\Sigma,\bar{g}_{\mu,b})}^2 + \tfrac{1}{4}\left\| \bar P_{\bar{g}_{\mu,b}}(k(\bar{u},\bar g_{\mu,b})) \right\|_{L^2(\Sigma,\bar g_{\mu,b})}^2,
\eeq
$\bar P_{\bar{g}_{\mu,b}}$ the $L^2$-orthogonal projection onto the tangent space of $(T_{\mu,b}^{-1})^* \mathcal{G}_b$ at $\bar g_{\mu,b}=(T_{\mu,b}^{-1})^* g_{\mu,b}$.

At times it will be useful to translate the condition \eqref{ass:Loj-thm} into a condition on the energy of the map $\bar u_{b}$ on suitable discs, and for this it is useful
to note
\begin{rmk}
\label{rmk:ball-disc}
For $\tilde b$ close to $b$, the set $F_b(B_{R}(\tilde b))$, which represents 
 $B_R(\tilde b)$, $R\leq  I-d_{b,\tilde b}$, in the conformal coordinates centred at $b$,
is well approximated by $\DD_r(F_b(\tilde b))$, $r:= r_{\Si}(R)$, in the sense that 
 \beq 
\DD_{(1-\eps)r}(F_b(\tilde b))\subset F_b(B_{R}(\tilde b))\subset \DD_{(1+\eps)r}(F_b(\tilde b)) \text{ holds for some } \eps\sim  d_{b,\tilde b}^2+
d_{b,\tilde b} R,
 \eeq
 $d_{b,\tilde b}:=d_{g_\Si}(b,\tilde b)$, 
compare \eqref{est:err-bound}.
\end{rmk}
In the specific situation of the theorem, the number $\eps$ scales like $O(\mu^{-2})$ which, for maps with sufficiently small energy defect, will in particular be less than $\half \eps_2$, compare Lemma \ref{lemma:SED-concentrate}. We hence obtain
\begin{rmk} \label{rmk:checking-E-coord}
Theorem \ref{thm:Loj-new} is applicable for all maps $\bar u$ with sufficiently small energy defect for which 
\beq \label{ass:check-in-coord-bar}
E(\bar u_b,\DD_{(1+ \eps_2)\mu^{-1}})\geq 2\pi-\eps_2 \quad \text{ and } \quad \sup_{\abs{y}\leq \mu^{-1}} E(\bar u_b,\DD_{(1-\half \eps_2)\mu^{-1}}(y)) \leq 2\pi+\eps_2.
\eeq
\end{rmk}
We will prove Theorem \ref{thm:Loj-new} based on the following result, which is established in Section \ref{sec:proof-LojEst}.
\begin{thm}\label{thm:Loj-NHD}
For every closed orientable surface $(\Si,g_\Si)$ of positive genus there exist constants $\eps_1>0$, $\lambda_0>1$ and $C$ such that the following holds true.

Let $\bar{u} \in H^1(\Sigma,S^2)$ be a degree $1$ map which is $H^1\cap L^\infty$ close to a simple bubble tree in the sense that there exist $\mu\geq \la_0$, $b \in \Si$ and  $\RR_0\in SO(3)$ so that $\bar u_b:=\bar u\circ F_b^{-1}$ satisfies 
\beq
\label{ass:LE-map-1}
 \|\na(\bar u_b - \RR_0 \pi_{\mu}) \|_{L^2(\DD_{\iota})}  + \|\bar u_b - \RR_0 \pi_{\mu}\|_{L^\infty(\DD_{\iota})}\leq \eps_1  
\eeq
and
\beq
\label{ass:LE-map-2}
 \|\na \bar{u} \|_{L^2(\Si\setminus B_{I}(b))}  + \| \bar{u} - \RR_0 \pi(\infty) \|_{L^\infty(\Si\setminus B_{I}(b))}\leq \eps_1.
\eeq
Then, the \Loj estimates \eqref{est:LE-energy}, \eqref{est:LE-mu} and \eqref{est:dist-bound-1}
hold true for some 
 $a\in \Si$, $\la\sim \mu$ and $\RR\in SO(3)$.  
 \end{thm}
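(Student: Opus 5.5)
The plan follows the Malchiodi--Sharp--Rupflin approach of \cite{MRS}: build a finite-dimensional family of adapted bubble-tree approximations $z=z_{\la,a,\RR}$, prove a nondegeneracy (coercivity) estimate for the second variation of $\EE_b$ orthogonal to this family, and reduce the \Loj inequality to an explicit one-dimensional analysis in the scale parameter.

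\emph{Step 1: adapted family.} For $a$ near $b$, $\la\sim\mu$ and $\RR$ near $\RR_0$, let $z_{\la,a,\RR}$ be $\RR\pi_\la(F_a(\cdot))$ on $B_{R_1}(a)$. Away from $a$ it is not simply cut off. Instead, the component $\RR(\pi_\la-\pi(\infty))$ is replaced by a harmonic extension/projection, namely the projection of the $\tfrac{2}{\la}\RR\,\mathrm{Re}(\cdot)$-type first-order term onto harmonic one-forms / meromorphic data on $\Si$. Since $\Si$ has positive genus there is no meromorphic function with a simple pole, so this extension necessarily has tension of size $\sim\la^{-1}$ concentrated on the scale $\la^{-1}$ region. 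The remaining error is quadratically small.

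Using \eqref{ass:LE-map-1}, \eqref{ass:LE-map-2} and an implicit function argument, I would choose $(\la,a,\RR)$ so that $w:=\bar u-z$ is $H^1(\Si,\bar g_{\la,a})$-orthogonal to the tangent space of the family (dilations, translations, rotations). This gives $\norm{w}_{H^1}\leqs\eps_1$ and $\la\sim\mu$.

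\emph{Step 2: expansion of the energy and gradient.} Write $E(\bar u)=E(z)+\langle dE(z),w\rangle+\tfrac12 d^2E(z)(w,w)+O(\norm{w}^3)$. Then:

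\begin{itemize}
\item The energy of the family satisfies $E(z)-4\pi= c_\Si\la^{-2}+O(\la^{-3})$ with $c_\Si>0$. This positivity is a Riemann--Hurwitz type fact: the obstruction term is the squared norm of the nonvanishing projection of the pole data onto holomorphic differentials.
\item The first variation $dE(z)$ has norm $\leqs \la^{-1}$ in the weighted dual norm. Moreover $\norm{\tau(z)}_{L^2(\bar g)}\sim\la^{-1}$, so its component along the scaling direction, measured via $\bar P(k(z,\bar g_{\mu,b}))$ through Corollary \ref{cor:proj-cylinder}, is $\geqs\la^{-1}$.
\item For the second variation, the Jacobi operator at $\RR\pi$ has kernel exactly spanned by the Möbius/rotation directions. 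Orthogonality then gives $d^2E(z)(w,w)\geq c\norm{w}^2_{H^1(\bar g)}$. Weighted Poincaré inequalities on the thin region $\Si\setminus B_I(a)$ with metric $c_\la^2g_\Si$ must be checked to hold uniformly.
\end{itemize}

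\emph{Step 3: the \Loj estimate.} Testing $\na\EE_b$ against $w$ gives $\norm{w}\leqs\norm{\na\EE_b}+\la^{-1}$. Testing against the scaling direction, which is exactly what the metric component $\bar P(k)$ detects through the trace-free formula \eqref{eq:cylinder-tracefree}, gives $\la^{-1}\leqs\norm{\na\EE_b}+\norm{w}^2+\la^{-2}$. This yields \eqref{est:LE-mu}, and then $\norm{w}\leqs\norm{\na\EE_b}$, which is \eqref{est:dist-bound-1} after comparing $z$ with $\RR\pi_\la$ and $\RR\pi(\infty)$ at cost $\la^{-1}$. Inserting both into the expansion gives $\abs{E(\bar u)-4\pi}\leqs\la^{-2}+\norm{w}^2\leqs\norm{\na\EE_b}^2$, i.e. \eqref{est:LE-energy}. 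The switch between $u$ and $\bar u$ is harmless by \eqref{def:EG}.

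\emph{Main obstacle.} I expect the hardest step to be the lower bound that $\bar P(k)$ (together with $\tau$) detects the scaling defect at order $\la^{-1}$. This requires computing $\int\psi'(s-X_\mu)\al(s)\,ds$ for $z$: one must show that the angular imbalance $\al$ on the neck region $X_\mu-4\le s\le X_\mu$ picks up a definite multiple of $\la^{-1}$ from the nontrivial genus correction, and that this is not cancelled. My first attempt would be to compute $\al$ exactly for $z$ via a Hopf-differential argument: $\al$ is the real part of the $s$-average of $\phi(z)e^{2(s+i\theta)}$, and the Hopf differential of $z$ is determined by the holomorphic-differential projection. I would then control the cross terms with $w$ by $\norm{w}\,\la^{-1}+\norm{w}^2$.
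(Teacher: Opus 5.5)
Your overall architecture matches the paper: a nearest model, coercivity of the second variation on $T^\perp_{\bar z}\ZZ$, and the scale detected through the metric component. However, Step 3 does not close.

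In \eqref{est:proj-cylinder} the factor $c_b(\mu)\sim\mu$ multiplies $\int\psi'(s-X_\mu)\al(s)\,ds$, and on the neck $\abs{\partial_s z}+\abs{\partial_\theta z}\sim\la^{-1}$. So your cross-term bound $\la^{-1}\norm{w}+\norm{w}^2$ for $\int\psi'(\al_{\bar u}-\al_z)$ becomes $\norm{w}+\la\norm{w}^2$ for the difference $\norm{\bar P_{\gbarmb}(k(\bar u,\gbarmb))}-\norm{\bar P_{\gbarmb}(k(z,\gbarmb))}$. The linear coefficient is genuinely of order one, e.g.\ for $w$ concentrated on the neck, which are essentially $H^1$-orthogonal to $T_{\bar z}\ZZ$.

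Testing in the scaling direction therefore only gives $\la^{-1}\leqs\norm{\na\EE_b}+\norm{w}$, which is exactly the paper's \eqref{claim:step2}. It does not give $\la^{-1}\leqs\norm{\na\EE_b}+\norm{w}^2+\la^{-2}$. Combined with your $\norm{w}\leqs\norm{\na\EE_b}+\la^{-1}$, this yields $\la^{-1}\leq C\norm{\na\EE_b}+C'\la^{-1}$ with no reason for $C'<1$, so nothing can be absorbed.

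The missing ingredient is that the model must be an approximate critical point to order $o(\la^{-1})$ in the dual of $H^1(\Si,\bar g_{\la,a})$. The paper uses $\abs{dE(\bar z_{\la,a})(w)}\leqs\la^{-2}(\log\la)^{\half}\norm{w}_{H^1(\Si,\bar g_{\la,a})}$ from \cite{R-Loj-1}, see \eqref{est:energy-old-var}. This holds even though $\norm{\tau_{\bar g}(\bar z_{\la,a})}_{L^2(\bar g)}$ is only $O(\la^{-1})$. Roughly, the tension is $O(\la^{-2})$ w.r.t.\ $g_\Si$ away from $a$ and is paired with $w$ in $L^1(\Si,g_\Si)$, which costs only a factor $(\log\la)^{\half}$. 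This gives \eqref{claim:step1} with error $\la^{-2}(\log\la)^{\half}=o(\la^{-1})$, and only then do the two estimates combine.

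The model you describe cannot provide this bound, nor the signal you want to detect.

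\emph{Tension of your model.} Suppose $z=\RR\pi_\la\circ F_a$ exactly on $B_{R_1}(a)$ and the genus correction lives only outside. Then the $O(\la^{-1})$ $C^1$-mismatch between $\pi_\la$ and the harmonic extension, essentially $\frac2\la\hatja$, must be interpolated on a fixed annulus where $\bar\rho_{\mu,b}\sim\la^{-1}$. This produces $\tau_{g_\Si}(z)$ of order $\la^{-1}$ there. Hence $\norm{\tau_{\bar g}(z)}_{L^2(\bar g)}$ is of order one, and the first variation is at least of order $\la^{-1}$ in the dual norm.

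\emph{Missing signal.} With that model, $z$ is conformal on essentially all of the neck $B_{R_1}(b)\setminus B_{R_2}(b)$ seen by $\bar P_{\gbarmb}$. So $\al_z\equiv0$ there and $\bar P_{\gbarmb}(k(z,\gbarmb))\approx0$. Your inference from $\norm{\tau(z)}\sim\la^{-1}$ to a lower bound on $\bar P(k)$ is a non sequitur in any case, since these are independent components of $\na\EE_b$.

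\emph{The fix.} The correction $\frac2\la\hatja$, which vanishes at $a$, has to be added on the whole ball as in \eqref{def:v-hat}--\eqref{def:maps-v}. Its interaction with $\pi_\la$ is what produces an angular imbalance $\al_z$ of order $\la^{-2}$ on the neck.

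Note also that the paper avoids your Hopf-differential computation altogether. It tests $\na\EE_b(u,g_{\mu,b})$ against the combined map--metric variation of Lemma \ref{lemma:special-variation}. That variation has $L^2$-size $\leqs\la^{-2}$, while $\ddeps E(z_\eps,g_\eps)\sim\la^{-3}$ by \eqref{est:energy-old} and \eqref{est:claim-peps-za}. In this way the Riemann--Hurwitz positivity is imported directly from $-\tfrac{d}{d\la}E(\bar z_{\la,a})\sim\la^{-3}$.
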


\begin{rmk} \label{rmk:old-Loj}
We note that a \Loj estimate of the form 
\beq
\label{est:Loj-old}
\abs{E(\bar u) - 4\pi} \leq C \| \tau_{g_\Si}(\bar u) \|_{L^2(\Sigma,g_\Si)}^2\left(1+\abs{\log \| \tau_{g_\Si}(\bar u) \|_{L^2(\Sigma,g_\Si)}}\right),
\eeq
that involves the tension with respect to the standard metric was obtained by the first author in \cite{R-Loj-1} and that \Loj estimates for maps from $S^2$ to itself were obtained by Topping \cite{Topping-rigidity, Topping-HMF-2} and Waldron \cite{Alex-Loj}. 
While \eqref{est:Loj-old} and \eqref{est:LE-energy} might at first glance look quite similar, the way in which they weigh the tension in different regions of the domain is indeed very different as $\| \tau_{\bar \rho_{\mu,b}^2 g_\Si}(\bar u)\|_{L^2(\Si,\bar \rho_{\mu,b}^2 g_\Si)} = \| \bar \rho_{\mu,b}^{-1} \tau_{g_\Si}(\bar u) \|_{L^2(\Si,g_\Si)}$ where $\bar \rho_{\mu,b}\gg 1$ near $b$ while $\bar \rho_{\mu,b}\ll 1$ away from $b$.
\end{rmk}

As the above \Loj estimates are all trivially true if 
$\norm{\na \EE_b(u,g_{\mu,b})}_{L^2(\Si,g_{\mu,b})}$ is bounded away from zero, we can  derive 
Theorem \ref{thm:Loj-new} from Theorem \ref{thm:Loj-NHD} using the following lemma.

\begin{lemma}
\label{lemma:derive-thm}
Let $\bar u_n \in H^1(\Si,S^2)$, $\deg(\bar u_n)=1$, and $\bar g_n=\bar g_{\mu_n,b_n}$ be so that
\beq
\label{est:prop-comp-almost}
\tau_n:= \| \tau_{\bar{g}_n}(\bar{u}_n) \|_{L^2(\Sigma,\bar{g}_{n})}\to 0, \qquad \mathcal{P}_n:=\left\| \bar P_{\bar{g}_n}(k(\bar{u}_n,{\bar{g}_n})) \right\|_{L^2(\Sigma,{\bar{g}_n})}\to 0 \text{ and } \de_{\bar{u}_n}\to 0 
\eeq
and so that 
there exists  $\eps_n \to 0$ with
\beq
\label{est:prop-comp-ELB}
E(\bar{u}_n,B_{R_{\Si}(\frac{1+\eps_n}{\mu_n})}(b_n))
\geq 2\pi - \eps_n \text{ and } \max_{d_{g_\Si}(\tilde b,b_n)\leq R_{\Si}(\mu_n^{-1}) }E(\bar{u}_n,B_{R_{\Si}(\frac{1-\eps_n}{\mu_n})}(\tilde b)) \leq 2\pi + \eps_n.
\eeq
Then, there exists $\RR_n \in SO(3)$ such that $\bar u_{n,b_n}:= \bar u_n\circ F_{b_n}^{-1}$ satisfies
\beq
\label{ass:LE-lemma-H1}
 \|\na( \bar u_{n,b_n} - \RR_n \pi_{\mu_n}) \|_{L^2(\DD_{\iota})}  +
\|\na \bar u_n \|_{L^2(\Si\setminus B_{I}(b_n))}  \to 0
 \eeq
and
\beq
\label{ass:LE-lemma-Linfty}
  \| \bar u_{n,b_n} - \RR_n \pi_{\mu_n}\|_{L^\infty(\DD_{\iota})}
+ \| \bar u_n -  \RR_n \pi(\infty) \|_{L^\infty(\Si\setminus B_{I}(b_n))}\to 0 .
\eeq
\end{lemma}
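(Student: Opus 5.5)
We argue by contradiction and compactness. Assuming the claim fails, we pass to a subsequence along which the quantities in \eqref{ass:LE-lemma-H1} or \eqref{ass:LE-lemma-Linfty} stay bounded below for every choice of $\RR_n$. The first step is to show that $\mu_n\to\infty$. If instead $\mu_n$ stayed bounded, the metrics $\bar g_n$ would remain in a compact family of metrics conformal to $g_\Si$. Then $\tau_n\to0$ gives $\norm{\tau_{g_\Si}(\bar u_n)}_{L^2}\to 0$, and Remark \ref{rmk:basic-compactness} applies, using $E<8\pi$. Lemma \ref{lemma:SED-concentrate} forces energy $4\pi$ to concentrate at a single point, so a bubble of scale $\la_n\to\infty$ forms. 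This contradicts \eqref{est:prop-comp-ELB}, which pins half the energy to a ball of radius $\sim\mu_n^{-1}$ bounded below; the alternative of no bubbling would yield a degree $1$ harmonic map with energy $4\pi$, excluded by Remark \ref{rmk:no-SED-harmonic}. Hence $\mu_n\to\infty$.

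Next we analyse the bubble region. We rescale $\hat u_n(x):=\bar u_{n,b_n}(\mu_n^{-1}x)$ on $\DD_{\mu_n\iota}$. On $\DD_{\mu_n r_1}$ the metric $\bar g_n$ is exactly $\pi_{\mu_n}^*g_{S^2}$, so $\hat u_n$ has tension with respect to $g_\pi$ tending to zero in $L^2(g_\pi)$. Since $g_\pi$ is a metric on $S^2\setminus\{\pi(\infty)\}$, we view $\hat u_n\circ\pi^{-1}$ as maps on $S^2$ minus a shrinking neighbourhood of the pole. Remark \ref{rmk:basic-compactness}, applied locally, gives strong $H^2_{loc}$ convergence to a harmonic map $\om$ away from finitely many points. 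The condition \eqref{est:prop-comp-ELB} with $\eps_n\to0$ shows three things: $\DD_1$ carries energy $2\pi$ in the limit, no disc $\DD_{1}(y)$ with $|y|\le1$ carries more, and no bubble forms inside the compact region, since a bubble at scale $\ll\mu_n^{-1}$ would put $4\pi$ into a small disc. Combined with the total energy bound $4\pi+o(1)$ and \eqref{fact:rotation}, this identifies $\om=\RR\pi$ with degree $+1$, centred at $0$ and at scale $1$. Degree $-1$ is excluded because of the degree $1$ constraint and the energy budget. Consequently $E(\bar u_n,\Si\setminus \text{(bubble region)})\to0$. This gives $\|\na\bar u_n\|_{L^2(\Si\setminus B_I(b_n))}\to0$. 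It also gives the $\dot H^1$ part of \eqref{ass:LE-lemma-H1} on compact subsets of the rescaled disc.

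The main obstacle is the neck region $\mu_n^{-1}R\le|x|\le\iota$, together with the $L^\infty$ statement \eqref{ass:LE-lemma-Linfty}. Small energy on the neck does not by itself prevent a nontrivial neck, that is, oscillation of the image along a long cylinder. This is exactly where we use $\mathcal P_n\to0$, compare Remark \ref{rmk:excluding-necks}. We work in cylindrical coordinates $(s,\theta)$. Standard neck analysis for small-tension maps with $\|\tau_{\bar g_n}\|\to0$ gives, on the neck, $\int|\tilde u_\theta|^2\to0$ in the weighted sense. This is because the weight $\bar\rho_n^{-1}$ is large and so makes the tension control strong there. The angular energy then decays exponentially away from the ends. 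The radial part is governed by the Pohozaev-type quantity $\al(s)=\int|\tilde u_s|^2-|\tilde u_\theta|^2d\theta$, which is almost constant in $s$ up to errors controlled by the tension. Corollary \ref{cor:proj-cylinder} gives $\mathcal P_n\sim\mu_n|\int\psi'(s-X_{\mu_n})\al\,ds|$, so $\mu_n|\al|\to0$ on the support of $\psi'$. Propagating this along the cylinder, using almost constancy of $\al$, shows that the radial energy $\int\!\int|\tilde u_s|^2$ over the neck, of length $\sim\log\mu_n$, tends to zero. Moreover $\int|\tilde u_s|\,ds\to 0$ by Cauchy–Schwarz, using $\al\lesssim o(\mu_n^{-1})$ plus decaying angular terms. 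This bounds the oscillation along the neck, so the neck image shrinks to the point $\RR\pi(\infty)$.

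Finally, on the body $\Si\setminus B_{R}(b_n)$ the energy tends to zero. Small energy on annuli, together with control of $\tau_{g_\Si}$ there (we have $\bar\rho_n\sim\mu_n^{-1}$, so $\norm{\tau_{g_\Si}}_{L^2}\lesssim\mu_n^{-1}\tau_n\to0$), gives $\epsilon$-regularity and hence $L^\infty$ closeness to a constant. This constant must match the neck end value $\RR\pi(\infty)$. Combining the three regions yields \eqref{ass:LE-lemma-H1} and \eqref{ass:LE-lemma-Linfty} with $\RR_n=\RR$, adjusted by small rotations if needed. This contradicts the assumption at the start.
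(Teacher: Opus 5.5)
Your proposal follows essentially the same route as the paper's proof: bubble convergence of the rescaled maps pinned down by \eqref{est:prop-comp-ELB}, $\epsilon$-regularity on the body using $\bar\rho_n\sim\mu_n^{-1}$, and on the neck exponential decay of the angular energy together with control of $\al$, obtained by pinning $\mu_n\al(X_{\mu_n})\to 0$ via $\mathcal P_n$ and Corollary \ref{cor:proj-cylinder}, propagating with the $e^{s}$-weighted tension, and then applying Cauchy--Schwarz on each circle. One small correction: the propagation gives $|\al(s)|\lesssim o(1)\,(e^{-s}+\mu_n^{-1})$ rather than $o(\mu_n^{-1})$ uniformly along the neck, but this still yields $\int|\al|^{1/2}\,ds\to 0$ and hence $\int|\tilde u_s|\to 0$.
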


 \begin{rmk}\label{rmk:excluding-necks}
 As the metrics 
 $\bar g_{\mu_n,b_n}$ degenerate, this lemma result does not directly follow from the theory of almost harmonic maps from fixed surfaces developed in \cite{Ding-Tian,Lin-Wang,Qian-Tian,Struwe}, though we will be able to exploit these results to see that 
 \eqref{ass:LE-lemma-H1} holds 
even without the assumption that $\mathcal{P}_n\to 0$. 
Conversely, this assumption is crucial  to exclude the formation of necks on the degenerating part of the surfaces $(\Si,\bar g_{\mu_n,b_n})$ and hence in the proof of \eqref{ass:LE-lemma-Linfty}. 
  \end{rmk}

\begin{proof}[Proof of Lemma  \ref{lemma:derive-thm}]
As it suffices to 
 show the lemma for subsequences, we can assume without loss of generality that $b_n\to b_\infty\in \Si$. We also recall that $\de_{\bar u_n}\to 0$ implies that $\mu_n\to \infty$. 
 
 To establish the lemma we will prove that there exists $\RR\in SO(3)$ so that, after passing to a subsequence,
\begin{eqnarray}
\label{claim:hallo-1}
\lim_{n\to\infty}  \|\na(  \bar u_{n,b_n} - \RR \pi_{\mu_n}) \|_{L^2(\DD_{\La \mu_n^{-1}})}  + \|\bar u_{n,b_n} - \RR \pi_{\mu_n}\|_{L^\infty(\DD_{\La \mu_n^{-1}})}&=& 0 \text{ for every } \La<\infty,\\
\label{claim:hallo-2}
\lim_{n \to \infty} \osc_{\Si\setminus B_R(b_n)} \bar u_n&=& 0 \text{ for every }R>0, \\
\label{claim:hallo-3}
\lim_{\La \to \infty} \limsup_{n\to \infty} 
\osc_{A_n(\La,c)}  \bar u_{n,b_n} &=& 0  \text{ for every  } c \in (0, \iota),
\end{eqnarray}
for $A_n(\La,c):=\DD_{c}\setminus \DD_{\La \mu_n^{-1}}$.
As $\osc_{A_n(\La,c)} \RR \pi_{\mu_n}=
\osc_{\DD_{c\mu_n}\setminus \DD_{\La }
} \pi\leq 
\osc_{\R^2\setminus \DD_{\La } } \pi\to 0$ as $\La \to \infty$ these estimates immediately imply \eqref{ass:LE-lemma-Linfty}.

As $E(\RR \pi_{\mu_n},\DD_{\La \mu_n}) =E( \pi,\DD_{\La })\to 4\pi$ as $\La\to \infty$ and  as $E(\bar u_n)\to 4\pi$ we also immediately obtain  from \eqref{claim:hallo-1} that both 
\begin{eqnarray}
\label{est:hallo-E-bulk}
\lim_{n\to\infty}\norm{\na \bar u_n}_{L^2(\Si\setminus B_R(b_n))}&=&0 \text{ for every } R>0, \text{ and }  
\\
\label{claim:hallo-3-E}
\lim_{\La \to \infty}\limsup_{n\to \infty} \norm{\na \bar u_{n,b_n}}_{L^2(A_n(\La,c))} &=& 0  \text{ for every } c\in (0,\iota) .
\end{eqnarray}
 Thus \eqref{ass:LE-lemma-H1} immediately follows  from \eqref{claim:hallo-1} and the fact that 
$\norm{\na (\RR \pi_{\mu_n})}_{L^2(A_n(\La,c))}
\leq \norm{\na \pi}_{L^2(\R^2\setminus \DD_{\La })}\to 0$ as $\La\to \infty$. This reduces the proof of the lemma to the proof of \eqref{claim:hallo-1}-\eqref{claim:hallo-3}.

To prove  \eqref{claim:hallo-1} we consider the 
rescaled maps 
$\tilde u_n(x):= \bar u_{n,b_n}(\mu_n^{-1} x)$
which are so that
$$\norm{\tau_{g_{\pi}}(\tilde u_n)}_{L^2(\DD_{R_n},g_\pi)}
=\norm{\tau_{g_{\pi,\mu_n}}(\bar u_{n,b_n})}_{L^2(\DD_{\half r_0},g_{\pi,\mu_n})}\leq \tau_n\to 0$$ for radii
$R_n:=\frac{r_0}{2} \mu_n\to \infty$ and so that 
$$E(\tilde u_n,\DD_{1+\eps_n})\geq 2\pi-\eps_n \quad \text{ and }\quad \sup_{\abs{y}\leq 1} E(\tilde u_n,\DD_{1-\eps_n-C\mu_n^{-2}}(y))\leq 2\pi+\eps_n, $$
compare \eqref{est:prop-comp-ELB} and Remark \ref{rmk:ball-disc}. As  $E(\bar u_n)=4\pi+\de_n<5\pi$ these maps cannot form any bubbles and must thus 
subconverge strongly in $H^2_{\loc}(\R^2)$ to a limiting harmonic map $u_\infty$ with $E(u_\infty)\leq 4\pi$ and 
\beq\label{eq:energy-uinfty}
E(u_\infty,\DD_1)=\sup_{\abs{y}\leq 1} E(u_\infty,\DD_1(y))=
2\pi.
\eeq
This harmonic limit must have degree 1, as a negative degree is excluded since the maps $\bar u_n$ have positive degree and small energy defect while $\deg(u_\infty)=0$ is excluded since $u_\infty$ is not constant. 
This allows us to write $u_\infty=\RR \pi(x)$ for some $\RR\in SO(3)$, compare \eqref{fact:rotation}, and deduce that \eqref{claim:hallo-1} holds. 

To establish \eqref{claim:hallo-2} for a given $R>0$
we use that  
the metrics $\bar g_n$ are all conformal to $g_\Si$ and that their conformal factor scales like $\mu_n^{-1}$ outside of balls of fixed radius around $b_n$. As $\mu_n\to \infty$ and $b_n\to b_\infty$ we can hence in particular bound  $\rho_{\mu_n,b_n}\leq 1$ on $\Si\setminus B_{R/4}(b_n)\supset \Si\setminus B_{R/2}(b_\infty)$ for all sufficiently large $n$ and we hence find that
$\norm{\tau_{g_\Si}(\bar u_n)}_{L^2(\Si\setminus B_{R/2}(b_\infty),g_\Si)}\leq \tau_n\to 0$. 
As the already established \eqref{claim:hallo-1} ensures that $E(\bar u_n, \Si\setminus B_{R/2}(b_\infty))\to 0$, compare \eqref{est:hallo-E-bulk}, the maps $\bar u_n$ must thus subconverge in $ H^2(\Si\setminus B_{3R/4}(b_\infty))$ to a constant map which in particular establishes \eqref{claim:hallo-2}. 

To establish the remaining claim
\eqref{claim:hallo-3},
we work in the rescaled cylindrical coordinates $(s=\bar s_{\mu_n,b_n},\theta)$ introduced in Remark \ref{rmk:weighted-metric-new}, in which $\bar g_n$ is given by $\rho^2 g_{\text{cyl}}$ and let $v_n$ be the maps which represent $\bar u_n$ in these coordinates. 
We note that points $x\in A_n(\La,c)$ have coordinates $(s=\log(\mu_n\abs{x}),\theta)\in I_n(\La,c)\times S^1$ for $I_n(\La,c)=[\log(\La),\log (c\mu_n)]$ 
and that the claim \eqref{claim:hallo-3} follows once we show that both
\beq 
\label{claim:hallo-4}
\sup_{s \in I_n(\La,c)
} \osc_{S^1\times \{s\}} v_n =o_{\La,n}
\text{ and }
\int_{I_n(\La,c)\times S^1}\abs{\partial_{s} v_n} d s d\theta =o_{\La,n},
\eeq
where we write for short $o_{\La,n}$ for quantities with $\lim_{\La\to \infty} \limsup_{n \to \infty} o_{\La,n}=0$.

As \eqref{claim:hallo-3-E} ensures that 
$E(v_n, I_n(e\La,e^{-1}c)\times S^1)= 
E(\bar u_{n,b_n}, A_n(e\La,e^{-1}c))=o_{\La,n}$, we know in particular that the energy on 
 cylinders of the form $[s-1,s+1]\times S^1$, $s\in I_n(\La,c)$, will be smaller than any given constant if $n$ and $\La$ are sufficiently large. This allows us to apply 
 standard angular 
energy estimates as found e.g. in  \cite[Lemma $2.1$]{H-R-T-TMF} 
to bound 
 $
\Theta_n(s) := \int_{\lbrace s \rbrace \times S^1} \abs{\partial_\theta v_n}^2 d\theta$  by
\beqa
\label{est:theta-s}
\Theta_n(s) & \leqs E(v_n, I_n(e\La,ce^{-1}) \times S^1) e^{-\dist(s,\partial I_n(e\La,e^{-1}c))} + \int_{I_n(e\La,ce^{-1})\times S^1} \abs{\tau_{g_{\text{cyl}}}(v_n)}^2 e^{-\abs{s-q}}d\theta dq\\
&= o_{\La,n} e^{-\dist(s,\partial I_n(\La,c))}+
\int_{I_n(e\La,ce^{-1})\times S^1} \abs{\tau_{\rho^2 \gcyl}(v_n)}^2 \rho^2(q)e^{-\abs{s-q}} dv_{\rho^2 g_{\text{cyl}}}\\
&= o_{\La,n} e^{-\dist(s,\partial I_n(\La,c))}+\tau_n e^{-\abs{s}}
\eeqa
for all $s\in I_n(\La,c)$ where the last step follows as $\rho(q)\sim e^{-\abs{q}}$.

This not only implies that 
$\sup_{I_n(\La,c)} \Theta_n^\half=o_{\La,n}$ and hence that the first claim in \eqref{claim:hallo-4} holds, but also that 
$\int_{I_n(\La,c)} \Theta_n^\half ds=o_{\La,n}$ and hence that 
\beqa
\int_{I_n(\La,c)\times S^1}\abs{\ps  v_n}& \leqs \int_{I_n(\La,c)} \big(\int_{S^1} \abs{\ps v_n}^2\big)^{\half} =\int_{I_n(\La,c)}  (\al_n+\Theta_n
)^{\half}\leqs o_{\La,n}+\int_{I_n(\La,c)}  \abs{\al_n}^\half 
\eeqa
for $ \al_n(s):= \int_{\{s\}\times S^1}  \abs{\ps v_n}^2 - \abs{\partial_\theta v_n}^2 d\theta$. It thus remains to show that also $\int_{I_n(\La,c)}  \abs{\al_n(s)}^\half ds=o_{\La,n}$.

We recall that the anti-holomorphic derivative of the function $\phi= \abs{v_s}^2 - \abs{v_\theta}^2 - 2 i v_s \cdot v_\theta$, that describes the 
Hopf-differential $\phi dz^2$, is given by 
$(\ps + i \partial_\theta) \phi= 2(v_{ss} + v_{\theta \theta})(v_s - i v_\theta)=2\tau_\gcyl(v)(v_s-i v_\th)$. As $\int_{S^1} \partial_\theta \phi=0$ we can thus write 
$\partial_s \alpha_n(s) = \int_{\{s\}\times S^1}\text{Re}((\ps + i \partial_\theta) \phi_n ) d\theta
=2\int_{S^1} \tau_{\gcyl}(v_n) \cdot \partial_s v_n d\theta$ and estimate
\beqa
\label{est:alpha-distance}
\abs{\alpha_n(s_1) - \alpha_n(s_2)}
& \leqs E(v_n)^\half \norm{\tau_{\gcyl}(v_n)}_{L^2([s_1,s_2]\times S^1)} \leqs  e^{-s_1} \tau_n
\text{ for all } s_1<s_2\leq X_{\mu_n}= \log(r_1 \mu_n).
\eeqa
In particular, $\abs{\alpha_n(s) - \alpha_n( X_{\mu_n})}\leqs  e^{-(X_{\mu_n}-4)} \tau_n\leqs \mu_n^{-1} \tau_n$ on $\supp(\psi'(\cdot-X_{\mu_n}))\subset [X_{\mu_n}-4,X_{\mu_n}]$ for $\psi$ as in \eqref{def:psi-g}.  
We now crucially use that the assumption that $\PP_n\to 0$ implies that 
$$\mu_n\left| \int 
\psi'(s-X_{\mu_n})\al_n(s) ds \right| \sim \PP_n\to 0,$$ 
see Corollary \ref{cor:proj-cylinder} and Remark \ref{rmk:switch-viewpoint}.  
As $\psi'\geq 0$ and 
$\int \psi'(s)ds = 1$ we can hence bound 
$$\mu_n \abs{\al_n(X_{\mu_n})}\leq  \mu_n \left|\int 
\psi'(s-X_{\mu_n})\cdot [\al_n(X_{\mu_n})-\al_n(s)] ds\right| +C\PP_n\leqs 
\tau_n+\PP_n.$$
Inserted into 
 \eqref{est:alpha-distance} this gives   
$\abs{\alpha_n(s)} \leqs (e^{-s}+\mu_n^{-1})o_{n}$ 
where $o_n\to 0$ as $n\to \infty$. As $\abs{I_n(\La,c)}\leqs \log \mu_n$ we hence obtain that indeed $\int_{I_{n}(\La,c)}\abs{\alpha_n(s)}^\half ds \leq (1+\mu_n^{-\half}\log \mu_n) o_n=o_n$ for every $\La$ and $c$, thus completing the proof. 
\end{proof}

\section{Analysis of our new gradient flow}\label{sec:flow}
Since  $\GG_b$ is 1-dimensional, we can view \eqref{eq:flow} as a system that consists of a semi-linear parabolic PDE coupled with a first-order ODE with locally Lipschitz right hand side. Short time existence of solutions for initial data $(u_0,g_0)\in C^{2,\al}(\Si,S^2)\times \GG_b$  
can hence be obtained by a standard iteration argument, see e.g. \cite[Appendix A]{R-TMF-2}. 
The definition of the flow \eqref{eq:flow} ensures that the energy decays according to 
\beqa
\label{eq:ddt-E}
\tfrac{d}{dt} E(u,g) 
= - \| \partial_t (u,g) \|_{L^2(\Sigma,g)}^2=-\norm{\na \EE_b(u,g_{\mu,b})}_{L^2(\Si,g_{\mu,b})}^2
\eeqa
along smooth solutions $(u(t),g(t)=g_{\mu(t),b})$ while the evolution of localised energies 
\beqs
E_{\phi}(u,g) := \frac{1}{2} \int_{\Sigma} \phi^2 \abs{du}_{g}^2 dv_{g},
\eeqs
obtained using \textit{time independent} cut-off functions 
 can be controlled by the following standard estimates.

\begin{lemma}\label{lem:loc-energy}
For any $\phi \in C^{\infty}(\Sigma,[0,1])$ and any smooth solution  $(u,g)$ of \eqref{eq:flow}
we have 
\beqa
\label{est:loc-energy-1}
\left| \mfrac{d}{dt} E_{\phi}(u,g) + \int_{\Sigma} \phi^2 \abs{\partial_t u}^2 dv_g \right| \leq C \| \partial_t g \|_{L^{\infty}( \text{supp}(\phi),g)} + C \| \nabla \phi \|_{L^{\infty}(\Si ,g)} \| \partial_t u \|_{L^2(\Sigma,g)},
\eeqa
for a constant $C$ that only depends on an upper bound on the  energy $E_\phi(u,g)$ at the corresponding time. 
\end{lemma}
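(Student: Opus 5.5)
We differentiate $E_\phi(u,g)=\frac12\int_\Si \phi^2 \abs{du}_g^2\,dv_g$ in time and split the result into the terms coming from the variation of the map and from the variation of the metric. Since $\phi$ is time independent and $g=g_{\mu(t),b}$ is smooth in $t$, for smooth solutions we get
\beqs
\mfrac{d}{dt} E_\phi(u,g) = \int_\Si \phi^2 \langle du, d\pt u\rangle_g\, dv_g \;+\; \half\int_\Si \phi^2\, \pt\big(\abs{du}_g^2 dv_g\big).
\eeqs
A direct computation (the same one that shows $-\na_g E=\half k(u,g)$, cf. \eqref{def:energy-stress}) gives $\pt(\abs{du}_g^2 dv_g) = -\langle k(u,g), \pt g\rangle_g\, dv_g$. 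Hence the second term is bounded by $\half\norm{\pt g}_{L^\infty(\supp\phi,g)}\int \phi^2\abs{k(u,g)}_g dv_g$. Since $\abs{k(u,g)}_g\leqs \abs{du}_g^2$ pointwise and $\phi\le 1$, this is at most $C E_\phi(u,g)\norm{\pt g}_{L^\infty(\supp(\phi),g)}$, which is the first error term.

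For the map term we integrate by parts on the closed surface $(\Si,g)$:
\beqs
\int_\Si \phi^2 \langle du, d\pt u\rangle_g = -\int_\Si \phi^2\, \Delta_g u\cdot \pt u\, dv_g - 2\int_\Si \phi\, \langle d\phi\otimes \pt u, du\rangle_g\, dv_g.
\eeqs
Because $\abs{u}=1$ we have $\pt u\perp u$, so $\Delta_g u\cdot\pt u = \tau_g(u)\cdot \pt u = \abs{\pt u}^2$ by \eqref{eq:flow}. The first part is therefore exactly $-\int\phi^2\abs{\pt u}^2 dv_g$. We bound the cutoff term by Cauchy--Schwarz:
\beqs
2\norm{\na\phi}_{L^\infty(\Si,g)}\,\norm{\pt u}_{L^2(\Si,g)}\,\Big(\int_\Si\phi^2\abs{du}_g^2\Big)^{\half}\leq C\, \norm{\na\phi}_{L^\infty(\Si,g)}\norm{\pt u}_{L^2(\Si,g)},
\eeqs
with $C = 2(2E_\phi)^{1/2}$. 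Combining the two estimates gives \eqref{est:loc-energy-1}.

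The only point needing care is the justification of these manipulations, in particular the integration by parts with $\Delta_g u\in L^2$. For smooth solutions this is immediate, and otherwise it follows by the regularity of the flow. Note that $\norm{\na\phi}_{L^\infty}$ is measured with respect to the time-dependent metric $g$, which is exactly what the statement records, so no uniformity in $\mu$ is needed.
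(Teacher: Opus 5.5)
Your proposal is correct and follows essentially the same argument as the paper. Like the paper, you split $\frac{d}{dt}E_\phi$ into the map variation and the metric variation. The map term is handled by integrating by parts, using $\partial_t u=\tau_g(u)=P_u(\Delta_g u)$ and Cauchy--Schwarz on the $d(\phi^2)$ term. The metric term is bounded via $\|\phi^2 k(u,g)\|_{L^1(\Sigma,g)}\leqs E_\phi(u,g)$ and $\|\partial_t g\|_{L^\infty(\supp(\phi),g)}$.
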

\begin{proof}[Proof of Lemma \ref{lem:loc-energy}]
Since $\phi$ is time-independent and 
$\partial_t u = \tau_g(u)=P_u(\Delta_g u)$, we have
\beqas
\abs{\ppeps E_\phi(u(\cdot+\eps), g) +\int_{\Sigma} \phi^2 \abs{\partial_t u}^2 dv_g }
&=\abs{ \int_{\Sigma} \partial_t u \langle du, d(\phi^2) \rangle_g dv_g}\leq  CE_\phi(u,g)^\half \| \nabla \phi \|_{L^{\infty}(\Si,g)} \| \partial_t u \|_{L^2(\Sigma,g)},
\eeqas
while the fact that  $\| \phi^2 k(u,g) \|_{L^1(\Si,g)} \leq C E_\phi(u,g)$ allows us to bound 
\beqas
\abs{\ppeps E_\phi(u, g(\cdot+\eps))}&=\mfrac{1}{2}\abs{ 
\int_\Si \phi^2 \langle \partial_t g, k(u,g) \rangle_g dv_g
}\leq  CE_\phi(u,g)\| \partial_t g \|_{L^{\infty}( \text{supp}(\phi),g)}. 
\eeqas
Combined, this yields \eqref{est:loc-energy-1}.
\end{proof}
We also remark that \eqref{est:trivial-norms-gt} and \eqref{est:Met-Var} yield a priori bounds of 
$\| \partial_t g \|_{L^{\infty}(\Si,g)}\leqs \mu^2 $  and hence of
$\abs{\pt \mu}  \leqs \mu^3$ 
 and note that any $C^k$ norm of $\partial_t g$ remains controlled for as long as $\mu$ remains bounded.

Up to minor modifications, we can hence argue exactly as in the original paper \cite{Struwe} of Struwe on the classical harmonic map flow to deduce that for any $(u_0,g_0)\in H^1(\Si,S^2) \times \GG_b$ there exists a solution $(u,g)$ of the 
flow \eqref{eq:flow}, which exists and is smooth (away from $t=0$) on a maximal time interval  $[0,T_{\max})$ where $T_{\max}$ can only be finite if either 
\vspace{-1pc}
\begin{itemize}
    \item $\mu$ leaves the admissible range of parameters, i.e. is so that $\mu(t)\to \infty $ or $\mu(t)\to \mu_*$  as $t \upto T_{\max}$, or
    \item $\mu(t)$ converges to some number $\mu(T_{\max}) \in (\mu_*,\infty)$ as $t \upto T_{\max}$, but the map becomes singular due to the formation of at least one bubble.
\end{itemize}

In the following we will focus on the analysis of solutions of \eqref{eq:flow} with small energy defect for which the initial metric is chosen so that condition \eqref{eqn:exact-bubble}, which we recall in \eqref{def:flow-scale} below, is satisfied, compare also Remark \ref{rmk:SED-concentrate-bubble}.

A key feature of our flow is that this condition remains essentially preserved along the flow.  This not only prevents the formation of singularities for the map component, but also allows us to apply the \Loj estimates obtained in Section \ref{sec:Loj-Est} to control the total $L^2$ distance travelled by both the map and metric component. To be more precise, we show
\begin{prop}\label{prop:flow-main}
For every closed orientable surface $(\Si,g_\Si)$ with positive genus there exist constants $\de_2 >0$ and $C$ such that 
the following holds true. Let 
$v: \Si \to S^2$ be any degree $1$ map with $\de_v \leq \delta_2$ and let $\mu_0 \geq \mu_*$ and $b \in \Si$ be so that 
\beq
\label{def:flow-scale}
E(v,B_{R_\Si(\mu_0^{-1})}(b))=\sup_{p \in \Sigma} E(v,B_{R_\Si(\mu_0^{-1})}(p))=2\pi. 
\eeq 
Then the maximal solution $(u(t),g(t)=g_{\mu(t),b})$,  $t \in [0,T_{\max})$, of \eqref{eq:flow} to initial data 
$(u_0,g_0):=T_{\mu_0,b}^*(v,\bar{g}_{\mu_0,b})$ has the following properties.
\begin{enumerate}
    \item\label{item:flow-1} The assumptions of Theorem \ref{thm:Loj-new} are satisfied 
   for $$(\bar{u},\bar g)(t)=(T_{\mu(t),b})_* (u (t),g(t)) \text{ for every } t\in [0,T_{\max})$$
and hence the \Loj estimates \eqref{est:LE-energy}, \eqref{est:LE-mu} and \eqref{est:dist-bound-1} are applicable for the whole flow. 
   \item \label{item:flow-2} The total distance travelled by the flow
is a priori bounded by 
\beq
\label{est:flow-distance-bound-L2}
\int_0^{T_{\max}} \| \partial_t u \|_{L^2(\Si,g)} + \| \partial_t g \|_{L^2(\Si,g)} dt \leq C\de_v^{\half}.
\eeq
\item \label{item:flow-3}
We have  $\mu(t)\to \infty $ as $t\upto T_{\max}$ irrespective of whether $T_{\max}$ is finite or infinite and we  have a uniform lower bound of $\mu(t)\geq c\de_v^{-\half}$ for all $t\in [0,T_{\max})$.
\end{enumerate}
\end{prop}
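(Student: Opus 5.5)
The plan is a continuity (bootstrap) argument. Let $T_1\in[0,T_{\max}]$ be the supremum of times $\tilde t$ such that \eqref{ass:Loj-thm} holds for $(\bar u,\bar g)(t)$ on $[0,\tilde t)$, using a slightly weakened form of the constants in Remark \ref{rmk:checking-E-coord}. We then show that on $[0,T_1)$ the \Loj estimate forces a small travel distance. That small distance in turn forces the localized energies to stay close to their initial values, so the condition cannot fail at $T_1$. Parts \ref{item:flow-2} and \ref{item:flow-3} are then read off from the \Loj estimates.

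\emph{Step 1 (initial data and the right viewpoint).} At $t=0$, \eqref{def:flow-scale} together with Remark \ref{rmk:ball-disc} gives \eqref{ass:check-in-coord-bar} with a margin: the energy is exactly $2\pi$ on the relevant discs, up to $O(\mu_0^{-2})$ errors. The key observation is that the sets in \eqref{ass:check-in-coord-bar} are \emph{fixed} in the pulled-back viewpoint. By \eqref{ass:Tmu}, $S_\mu=\mu T_\mu$ equals $\mu_*\,\mathrm{id}$ on $\DD_{r_2}$ for every $\mu\ge\mu_*$. Hence the disc $\DD_{\rho\mu^{-1}}(y)$ in the $\bar u$-picture, with $\rho\le 2$ and $\abs{y}\le\mu^{-1}$, corresponds to the fixed disc $\DD_{\rho\mu_*^{-1}}(\mu_*^{-1}\mu y)\subset \DD_{3\mu_*^{-1}}\subset\DD_{r_2}$ in the $u$-picture. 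On this disc $g_{\mu,b}\equiv \mu_*^*g_\pi$ for all $\mu$, so $\pt g=0$ on $\DD_{r_2}$; this also follows from Proposition \ref{prop:metrics-main}\ref{item:metric-for-free}. The condition of Theorem \ref{thm:Loj-new} thus reduces to bounds on $E_\phi(u(t),g(t))$ for finitely many time-independent cut-offs $\phi$. These cut-offs are supported in $\DD_{r_2}$, approximate the characteristic functions of these discs, and have $\abs{\na\phi}_g\leqs 1$ by fixed constants. Note also that the translation parameter $\mu y$ ranges over a fixed set.

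\emph{Step 2 (distance bound on $[0,T_1)$).} On $[0,T_1)$, Theorem \ref{thm:Loj-new} and \eqref{eq:ddt-E} give
\[
-\tfrac{d}{dt}\big(E(u,g)-4\pi\big)^{\half}=\frac{\norm{\na\EE_b}^2}{2(E-4\pi)^{\half}}\geq c\norm{\na\EE_b}=c\,\norm{\pt(u,g)}_{L^2(\Si,g)}.
\]
Integrating yields $\int_0^{T_1}\norm{\pt u}+\norm{\pt g}\,dt\le C\de_v^{\half}$. Lemma \ref{lem:loc-energy} applied with the cut-offs of Step 1 has no $\pt g$ term, because $\pt g$ vanishes on $\supp\phi$. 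It therefore gives $\abs{E_\phi(t)-E_\phi(0)}\le C\de_v^{\half}\le \tfrac14\eps_2$ once $\de_2$ is small. This strict improvement contradicts maximality of $T_1$ unless $T_1=T_{\max}$, which proves \ref{item:flow-1} and \ref{item:flow-2}.

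\emph{Obstacle.} The main obstacle I anticipate is continuity at $T_1$ when $T_1<T_{\max}$. The flow is smooth there, so the localized energies are continuous and the strict improvement gives the contradiction. One must however check that the cut-off version controls both the lower bound on $\DD_{(1+\eps_2)\mu^{-1}}$ and the supremum over translated discs uniformly. For the supremum I would use a finite net of centres together with slightly enlarged and shrunk discs, which is the reason for the $\half\eps_2$ slack in Remark \ref{rmk:checking-E-coord}.

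\emph{Step 3 (singularities and the behaviour of $\mu$).} Bubbling at a finite time with $\mu$ bounded is excluded as follows. The flow stays in the regime of Theorem \ref{thm:Loj-new}, so by \eqref{est:dist-bound-1} and the energy bound $E<5\pi$ at most $2\pi+\eps_2$ can concentrate on balls of the fixed size above. Away from $b$ the energy is small, since $\bar g\sim\mu^{-2}g_\Si$ there and $\mu$ is bounded, so no $4\pi$ bubble can form. For $\mu$, note that $\norm{\pt g}_{L^2}=\abs{\pt\mu}\norm{\pmu g_{\mu,b}}_{L^2}\sim\abs{\pt\mu}\mu^{-2}=\abs{\pt(\mu^{-1})}$ by \eqref{est:Met-Var}. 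Hence $\mu^{-1}$ has bounded variation, with total variation at most $C\de_v^{\half}$, and $\mu^{-1}(t)$ converges as $t\upto T_{\max}$.

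If $T_{\max}=\infty$, then $\int\norm{\na\EE_b}^2<\infty$, so $\norm{\na\EE_b}\to0$ along a sequence of times. By \eqref{est:LE-mu} this forces $\mu^{-1}\to0$ along that sequence, and hence along the full limit. If $T_{\max}<\infty$, singularities are excluded, so $\mu$ must leave the range $(\mu_*,\infty)$. In either case, once we know $\lim_{t\upto T_{\max}}\mu^{-1}(t)=0$ we have
\[
\mu^{-1}(t)\le\int_t^{T_{\max}}\abs{\pt(\mu^{-1})}\,ds\le C\de_v^{\half}\quad\text{for all } t\in[0,T_{\max}).
\]
This gives $\mu(t)\ge c\de_v^{-\half}>\mu_*$, which rules out $\mu\to\mu_*$ a posteriori. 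Strictly, the argument is circular only in appearance. From $\mu^{-1}(t)\le\mu_0^{-1}+C\de_v^{\half}\leq(4\mu_*)^{-1}+C\de_v^{\half}<\mu_*^{-1}$ (using $\mu_0\ge4\mu_*$ from Remark \ref{rmk:SED-concentrate-bubble}), $\mu$ stays away from $\mu_*$ first. Hence $\mu\to\infty$, and the uniform bound $\mu(t)\ge c\de_v^{-\half}$ then follows from the displayed integral, completing \ref{item:flow-3}.
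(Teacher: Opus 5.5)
Your bootstrap follows the paper's proof step for step. You work on fixed discs in the pulled-back picture, where $\pt g\equiv 0$ on $\DD_{r_2}$. You run Simon's argument from \eqref{est:LE-energy} to bound the distance travelled, and use the local energy estimate with no $\pt g$ term. You get bounded variation of $\mu^{-1}$ from $\norm{\pt g}_{L^2}\sim\mu^{-2}\abs{\pt\mu}$, and exclude $\mu\to\mu_*$ via $\mu_0\geq 4\mu_*$. Two small remarks:
\begin{itemize}
\item The finite net of centres is unnecessary. The paper takes one cut-off per centre $y$, with $\abs{\na\phi}\lesssim\eps_2^{-1}r_3^{-1}$ uniformly in $y$, so the local energy bound is uniform.
\item $\DD_{3\mu_*^{-1}}\not\subset\DD_{r_2}$, since $r_2=e\,r_3$. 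You only need radii up to $(2+\eps_2)r_3<r_2$, so nothing breaks.
\end{itemize}

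The one genuinely different step is the case $T_{\max}=\infty$. You use $\int_0^\infty\norm{\na\EE_b}^2\,dt\leq\de_v$ to pick $t_n\to\infty$ with $\norm{\na\EE_b}(t_n)\to0$. Then \eqref{est:LE-mu}, which part (i) makes available for all $t$ with a uniform constant, forces $\mu^{-1}(t_n)\to 0$. This is correct and shorter than the paper's route. The paper assumes $\mu_\infty<\infty$ and uses smooth convergence of $g(t)$ plus compactness of almost harmonic maps, with bubbling at $t=\infty$ excluded by \eqref{est:exact-oroof}. This yields a degree-$1$ harmonic limit on $(\Si,g_{\mu_\infty,b})$ with energy below $4\pi+\de_*$, contradicting Remark \ref{rmk:no-SED-harmonic}. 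Your argument uses the fact that this non-existence is already encoded in \eqref{est:LE-mu}; the paper's argument makes the underlying reason explicit.

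The flaw is in your justification of no bubbling when $T_{\max}<\infty$.
\begin{itemize}
\item Estimate \eqref{est:dist-bound-1} is useless near a singular time, because its right-hand side $\norm{\na\EE_b}$ blows up there.
\item ``Away from $b$ the energy is small since $\bar g\sim\mu^{-2}g_\Si$'' is not a valid reason. The Dirichlet energy is conformally invariant. Moreover, the central disc carries only about half the bubble, so the energy outside it is close to $2\pi$, not small.
\end{itemize}
The correct argument uses only the bootstrap bounds \eqref{est:exact-oroof}, which you have already established on all of $[0,T_{\max})$:
\begin{itemize}
\item A bubble at a point $a$ with $\abs{F_b(a)}\leq\frac32 r_3$ would put energy close to $4\pi$ into some disc $\DD_{r_3(1-\frac12\eps_2)}(y)$ with $\abs{y}\leq r_3$. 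This contradicts the upper bound $2\pi+\eps_2$.
\item A bubble at any other point, combined with $E(u_b,\DD_{r_3(1+\eps_2)})\geq 2\pi-\eps_2$, would force total energy at least $6\pi-\eps_2>5\pi$.
\end{itemize}
With this one-line repair your proof is complete.
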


\begin{rmk}\label{rmk:apriori-mu-zero}
We note that the last statement in particular ensures that 
\beq
\label{est:apriori-mu-zero}
\mu_0^{-1}\leq C \de_v^{\half} \text{ for } \mu_0 \text{ as in \eqref{def:flow-scale}}.
\eeq
\end{rmk}

\begin{proof}[Proof of Proposition \ref{prop:flow-main}]
We show that the proposition holds for a sufficiently small $\de_2>0$, chosen in particular to be less than the numbers $\de_1$ and $\de_*$ from Theorem \ref{thm:Loj-new} and Remark \ref{rmk:no-SED-harmonic}.

As  $\de(t):=\de_{\bar u(t)}=\de_{(u,g)(t)}$  is non increasing along the flow, the assumption on the smallness of the energy defect
required in Theorem \ref{thm:Loj-new} is thus trivially satisfied. 
We also recall that \eqref{ass:Loj-thm} follows once we check that \eqref{ass:check-in-coord-bar} holds for $\bar u_b(t)=u_b(t)\circ T_{\mu(t)}$. 
As $T_\mu x\equiv\mu_*\mu^{-1} x $ on $\DD_{r_2}$  and as $r_3=\mu_*^{-1}<\half r_2$
this condition \eqref{ass:check-in-coord-bar} is equivalent to 
\beq
\label{est:exact-oroof}
E(u_b(t),\DD_{r_3(1+ \eps_2)})\geq 2\pi-\eps_2 \text{ and } \sup_{\abs{y}\leq r_3} E(u_b(t),\DD_{r_3(1-\half \eps_2)}(y)) \leq 2\pi+\eps_2
\eeq
while
our choice of $\mu_0$ and $b$ ensures that 
\beq
\label{est:exact2}
E(u_b(0),\DD_{r_3})=2\pi\geq \sup_{\abs{y}\leq r_3} E(u_b(0),\DD_{r_3(1-\frac14 \eps_2)}(y)),
\eeq
compare Remark \ref{rmk:ball-disc} and the subsequent discussion.  Hence 
we know that \eqref{est:exact-oroof} holds at least for small times, allowing us to consider 
the maximal $T_1>0$ for which  \eqref{est:exact-oroof} holds on $[0,T_1)$.
This choice of $T_1$ ensures that the \Loj estimate \eqref{est:LE-energy} is applicable on $[0,T_1)$. As already observed by Simon in \cite{Simon} such \Loj estimates can be used to bound the evolution of the energy defect $\de(t):= \de_{(u,g)(t)}$ by
\beqa\label{est:speed-by-de}
- 2\tfrac{d}{dt} \de^{\half} &= \de^{-\half} (-\ddt E(u,g))=
\de^{-\half} \| \partial_t (u,g) \|_{L^2(\Sigma,g)}^2 =\de^{-\half} \norm{\na \EE_b(u,g)}_{L^2(\Si,g)} \cdot  \| \partial_t (u,g) \|_{L^2(\Sigma,g)} \\
& \geq C^{-1} \| \partial_t (u,g) \|_{L^2(\Sigma,g)}
\eeqa
 and hence the distance travelled by the flow by  
\beq \label{claim:dist-flow}
\int_{t_1}^{t_2} \| \partial_t u \|_{L^2(\Si,g)} + \| \partial_t g \|_{L^2(\Si,g)} dt \leq C( \de(t_1)^{\half}-\de(t_2)^{\half}) \text{ for all } 0\leq t_1\leq t_2<T_1.
\eeq
As part \ref{item:metric-for-free}  of Proposition \ref{prop:metrics-main} ensures that 
$\pt g$ vanishes on $B_{R_\Si(r_2)}(b)$, the local energy estimates from Lemma \ref{lem:loc-energy} hence allow us to bound
\beqa
\label{est:loc-energy-2}
\abs{E_\phi((u,g)(t_2)) - E_\phi((u,g)(t_1))} & \leq C 
\| \nabla \ph \|_{L^{\infty}(\Si,g)}\int_{t_1}^{t_2} \norm{\pt u}_{L^2(\Si,g)}dt +\int_{t_1}^{t_2}\norm{\pt u}^2_{L^2(\Si,g)} dt\\ &\leq C \| \nabla \ph_b \|_{L^{\infty}(\DD_{r_2})} \de_{v}^{\half}+\de_v\leq C (1+\| \nabla \ph_b \|_{L^{\infty}(\DD_{r_2})} )\de_v^{\half}
\eeqa
for every $\phi\in C_c^\infty(B_{R_\Si(r_2)}(b), [0,1])$. 

Given any $\abs{y}\leq r_3=e^{-1}r_2$ we can choose such a $\phi$ with 
$\phi_b\equiv 1 $ on $\DD_{r_3(1-\half \eps_2)}(y)$,  
$\supp(\phi_b)\subset \DD_{r_3(1-\frac14 \eps_2)}(y)$ and $\abs{\nabla\phi_b}\leqs \eps_2^{-1}r_3^{-1}\sim 1$ and use 
\eqref{est:exact2}
to deduce that 
$$ E(u_{b}(t),\DD_{r_3(1-\half \eps_2)}(y)) \leq 2\pi +C\de_v^{\half}\leq 2\pi+\thalf \eps_2 \text{ for } t\in [0,T_1)$$
where the last estimate holds after reducing $\de_2$ if necessary.  Similarly, choosing $\phi$ so that 
$\phi_b\equiv 1 $ on $\DD_{r_3}$, $\supp(\phi_b)\subset \DD_{r_3(1+\eps_2)}$ and $\abs{\nabla\phi_b}\leqs \eps_2^{-1}r_3^{-1}\sim 1$ gives 
$$E(u_{b}(t),\DD_{r_3(1+\eps_2)}) \geq 2\pi - C\de_v^{\half} \geq 2\pi -\thalf\eps_2  \text{ for } t\in [0,T_1).$$
If $T_1$ was strictly less than $T_{\max}$ this would ensure that \eqref{est:exact-oroof} would remain valid on some interval $[0,T_1+\eps)$, $\eps>0$, contradicting the definition of $T_1$. We thus conclude that $T_1=T_{\max}$ which not only establishes \ref{item:flow-1}, but also ensures that the above estimates hold on all of $[0,T_{\max})$. In particular, the second part of the proposition follows from \eqref{claim:dist-flow}, which, when combined with 
 $\| \partial_t g \|_{L^2(\Sigma,g)} \sim \mu^{-2} \abs{\partial_t \mu_t}$, also ensures that 
\beq
\label{est:flow-scale-bound}
\abs{\mu^{-1}(t_1) - \mu^{-1}(t_2)} \leq \int_{t_1}^{t_2} \mu^{-2} \abs{\partial_t \mu_t}dt \leq  \int_{t_1}^{t_2} \| \partial_t g \|_{L^2(\Sigma,g)} dt  
\leqs \de(t_1)
^{\half}-\de(t_2)^{\half},
\eeq
for all $0 \leq t_1\leq t_2< T_{\max}$.
Thus $\mu(t)$ converges  to some $\mu_\infty \in [\mu_*,\infty]$ as  $t\upto T_{\max}$ which we claim must be $\mu_\infty=\infty$. 

We first note that \eqref{est:flow-scale-bound} ensure that 
$\mu^{-1}(t)\leq \mu_0^{-1}+C\de(t)^{\half} \leq \frac14 \mu_*^{-1}+ C\de_1^{\frac12}\leq \half \mu_*^{-1}$,  compare Remark \ref{rmk:SED-concentrate-bubble}, thus  excluding the possibility that  $\mu(t)$ approaches the lower bound of the admissible parameter range. 

To exclude the possibility that $\mu_\infty\in (\mu_*, \infty)$ we now note that the formation of a bubble, be it at finite or infinite $T_{\max}$ is excluded by 
\eqref{est:exact-oroof} as the formation of a bubble requires energy at least $4\pi$ and as $E(u,g)\leq 4\pi+\de_1<5\pi$: A bubble forming at a point  
$a\in B_{R_\Si(\frac32r_3)}(b)$
would contradict the second estimate in 
\eqref{est:exact-oroof}, while a bubble forming at some $a\in \Si\setminus B_{R_\Si(\frac32r_3)}(b)$ would contradict the first estimate in \eqref{est:exact-oroof}.
This immediately excludes the possibility that $\mu_\infty\in (\mu_*, \infty)$ if $T_{\max}$ is finite, as in this case we could continue the flow smoothly past $T_{\max}$. 

If $T_{\max}=\infty$ and $\mu_\infty\in (\mu_*, \infty)$ we could instead use 
that $g(t)$ converges smoothly to $g_{\mu_\infty,b}$ to apply standard 
compactness theory for almost harmonic maps to deduce that $u(t)$ subconverges strongly in $H^2$ to a limiting harmonic map $u_\infty: (\Si,g_{\mu_\infty,b})\to S^2$ with $E(u_\infty,g_{\mu_\infty,b})\leq 4\pi+\de_v<4\pi+\de_*$. This also leads to a contradiction since $u_\infty$ cannot be constant, as this would violate the first estimate in \eqref{est:exact-oroof}, and must thus have energy at least $4\pi+\de_{*}$ as the energy is conformally invariant and as 
$(\Si,g_{\mu_\infty,b})$ is isometric to $(\Si,\bar g_{\mu_\infty,b})$, which in turn is conformal to our original surface $(\Si,g_\Si)$. 

Hence $\mu_\infty=\infty$ and the claimed uniform lower bound on $\mu(t_1)$ follows from \eqref{est:flow-scale-bound} in the limit $t_2\to \infty$.
\end{proof}

\section{Proof of Theorem \ref{thm:main}}\label{sec:thm1-proof}
In this section we use the flow \eqref{eq:flow} to prove our main quantitative stability result, Theorem \ref{thm:main}. 

As all the claims of the theorem are trivially true for maps $v$ whose energy defect is larger than a fixed positive constant, it suffices to consider degree $1$ maps $v \in H^1(\Si, S^2)$  with 
\beqs
\de_v := E(v,g_\Si) - 4\pi \leq \de_0 ,
\eeqs
where $\de_0=\de_0(\Si,g_\Si)>0$ will be chosen below and will in particular be so that $\de_0 \leq \min \{\de_1,\de_2,\de_*, \half \}$
for the constants $\de_{1}$, $\de_2$ and $\de_*$ from Theorem \ref{thm:Loj-new}, Proposition \ref{prop:flow-main} and Remark \ref{rmk:no-SED-harmonic}.

Given such a map $v$ we let $\mu_0$ and $b\in \Si$ be so that  \eqref{def:flow-scale} holds, recalling from  
Remark \ref{rmk:apriori-mu-zero} that 
$\mu_0^{-1}\leqs \de_v^{\half}$.
We then consider the 
maximal solution
 $(u(t),g(t)=g_{\mu(t),b}), t \in [0,T_{\max})$, 
of  \eqref{eq:flow}
with $u(0)=T_{\mu_0,b}^*v$ and $g(0)=  g_{\mu_0,b}=T_{\mu_0,b}^*\bar g_{\mu_0,b}$, for which    Proposition \ref{prop:flow-main} ensures that $\mu(t)\to \infty$ as $t\upto T_{\max}$.

Pushing  $(u,g)(t)$ forward with the  \textit{fixed} diffeomorphism $T_{\mu_0,b}$ hence yields a curve of maps and metrics 
 \beq \label{def:deform-curve }(U(t),G(t)):=(T_{\mu_0,b})_* (u(t),g(t)), \qquad t\in [0,T_{\max})
 \eeq with $U(0)=v$ and $G(0)=\bar g_{\mu_0,b}$ 
 which is so that 
 \beq \label{est:length-G-U}\int_0^{T_{\max}} \| \partial_t U \|_{L^2(\Si,G)} + \| \partial_t G \|_{L^2(\Si,G)} dt =
\int_0^{T_{\max}} \| \partial_t u \|_{L^2(\Si,g)} + \| \partial_t g \|_{L^2(\Si,g)} dt \leq C\de_v^{\half},\eeq
compare \eqref{est:flow-distance-bound-L2}. As $\mu(t)\to \infty$, part \ref{item:metric-for-free} of Proposition \ref{prop:metrics-main} furthermore implies that the 
 metrics $G(t)$ converge as $t\upto T_{\max}$ to the metric
$$G_\infty:=(T_{\mu_0,b})_*g_{\infty,b} \text{ on }
\Om_\infty:=T_{\mu_0,b}(B_{R_\Si(r_\infty)}(b))=B_{ R_\Si(\hat r_\infty)} (b), \quad \hat r_\infty=t_{\mu_0}(r_\infty)\in (r_2,r_1]
,$$ 
which is so that $(\Om_\infty,G_\infty)$ is isometric to a punctured sphere.
Indeed,  the key property \eqref{claim:metrics-identical} of the metics $g(t)=g_{\mu(t),b}$ established in Proposition \ref{prop:metrics-main}, 
combined with the fact that $\mu(t)\to \infty$ as $t\upto T_{\max}$, implies that 
 for any compact subset $K\subset \Om_{\infty} $  the metrics $G(t)$ eventually agree with $G_\infty$ on $K$, i.e. that there exists $T_K<T_{\max}$ for which
  \beq 
  \label{eq:metrics-equal-limit} 
  G(t)\equiv G_\infty \text{ and hence } \pt U(t)=\tau_{G_\infty}(U(t))\text{ on } K\times [T_{K},T_{\max}).\eeq
As Proposition \ref{prop:metrics-main} ensures that 
$G(t)\geq e^{-3}G_\infty$ for every $t$ 
we obtain from \eqref{est:length-G-U} that 
$$\int_0^{T_{\max}} \| \partial_t U \|_{L^2(\Om_\infty,G_\infty)}  dt \leq e^3 \int_0^{T_{\max}} \| \partial_t U \|_{L^2(\Si,G)}  dt \leq C\de_v^{\half},
$$
and hence that  $U(t)$ converges strongly
 in $L^2(\Om_\infty,G_\infty)$  to a limit 
 $U(T_{\max})$ as $t\upto T_{\max}$ which is so that
      \beq
    \label{est:dist-to-w}
    \norm{v-U(T_{\max})}_{L^2(\Om_{\infty},G_\infty )}\leq 
\int_0^{T_{\max}} \norm{\pt U}_{L^2(\Om_\infty,G_\infty)} \leq C \delta_{v}^{\half}.
    \eeq 
    We now note that the functions $U_b(t)$ and $u_b(t)$ that represent $U(t)$ and $u(t)$ in the fixed coordinates $F_b$ are related by $u_b(t)=U_b(t)\circ T_{\mu_0}$ and recall that $T_{\mu_0}(x)\equiv \mu_* \mu_0^{-1}x = r_3^{-1} \mu_0^{-1}x$ on $\DD_{r_2}$. 
The estimate \eqref{est:exact-oroof} obtained in the proof of 
        Proposition \ref{prop:flow-main} hence translates into
\beq
\label{est:energy-big-U}
E(U_b(t),\DD_{\mu_0^{-1}(1+\eps_2)})
\geq 2\pi - \eps_2 \text{ and } \max_{\abs{y} \leq \mu_0^{-1} }E(U_b(t),\DD_{\mu_0^{-1}(1-\half \eps_2)}(y)) \leq 2\pi + \eps_2
\eeq
for all $t\in [0,T_{\max})$.     
As \eqref{eq:metrics-equal-limit} allows us to view $U(t)$ as a solution of the classical harmonic map flow (with fixed metric) on $K\times [T_K,T_{\max})$ for which the formation of a bubble  as $t\upto T_{\max}$ is excluded by 
\eqref{est:energy-big-U} and $E(U(t),G(t))\leq 4\pi+\de_v<5\pi$,
we can apply  the classical results
of Struwe \cite{Struwe}
to deduce that $U(t)$ indeed converges smoothly locally to $U(T_{\max})$ on $(\Om_\infty,G_\infty)$ and that  \eqref{est:energy-big-U} is valid also at $t=T_{\max}$. 

We next explain why our construction ensures that the degree of $U(T_{\max})$ is still $1$. 
For this we can use that $(\Om_\infty\setminus\{b\},G_\infty)$ is conformal to the infinite cylinder $C_\infty=\R\times S^1$ via a diffeomorphism $F_{\infty}^{\text{cyl}}$ which maps (punctured) balls $B_R(b)\setminus \{b\}$ to cylinders $(-\infty, s_{R})\times S^1$.
We now note that given any map $w:C_\infty\to S^2$ with finite energy and any $s^*\in \R$ with $\int_{S^1}\abs{\partial_\theta w(s^*,\theta)}^2 d\theta< \eps\ll 1$, we can cut $w(s^*,\theta)$ off to a constant on unit cylinders and project onto $S^2$ to obtain maps $w^\pm:C_\infty\to S^2$ with $w^\pm\equiv w$ for $\pm (s-s^*)\geq 0$ and $w^\pm$ constant for  $\pm (s-s^*)\leq -1$ so that 
$$\deg(w^+)+\deg(w^-)=\deg(w) \text{ and } E(w^+)+E(w^-)=E(w)+o(1),$$
where $o(1)$ can be made arbitrarily small by reducing $\eps$, compare \cite[Section $3$]{R-Rig-23}. 
The smooth local convergence of the maps $w(t):=U(t)\circ (F_{\infty}^{\text{cyl}})^{-1}$ to $w(T_{\max}):=U(T_{\max})\circ (F_{\infty}^{\text{cyl}})^{-1}$ means that we 
can carry out this construction with a suitably chosen fixed $s^*> s_{R_2}$ for all $t\leq T_{\max}$ that are sufficiently close to $T_{\max}$. The resulting maps $U^\pm(t)=w^{\pm}(t)\circ (F_{\infty}^{\text{cyl}})$ are so that $E(U^+(t))\leq E(u(t), \Si\setminus B_{R_2}(b))<3\pi$, ensuring that  $\deg(U^+(t))=0$, and so that $U^-(t)\to U^-(T_{\max})$ smoothly on $\Si$. We thus conclude that 
$\deg(U(T_{\max}))=\deg(U^-(T_{\max}))=\lim_{t\upto T_{\max}}\deg(U^-(t))=\lim_{t\upto T_{\max}}\deg(U(t))=1$.

We note that \eqref{eq:metrics-equal-limit}
ensures that $U(T_{\max})$ is harmonic if $T_{\max}=\infty$. If $T_{\max}<\infty$  we instead use  $U(T_{\max})$ as initial data for the standard harmonic map flow on the sphere, i.e. extend our flow past $T_{\max}$ as 
$$\pt U(t)=\tau_{G_\infty}(U(t)), \qquad G(t)\equiv G_\infty \text{ on } \Om_\infty\times [T_{\max},\infty).
$$ 
Instead of using the \Loj estimate \eqref{est:LE-energy} from Theorem \ref{thm:Loj-new}, we can now use the 
 \Loj estimate for maps from $S^2$ to itself obtained by Topping in \cite{Topping-rigidity} which allows us to bound 
\beq E(U(t),G_\infty)-4\pi\leq C \norm{\tau_{G_\infty}(U(t))}_{L^2(\Om_\infty,G_\infty)}^2 \label{est:Loj-Topping} \eeq  for as long as  the solution $U(t)$, $t>T_{\max}$, remains smooth.   
After reducing $\de_0$ if necessary, we can hence repeat the argument used in the proof of Proposition \ref{prop:flow-main} to deduce that the solution remains smooth and 
the estimates 
\beq
\label{est:E-small-after-sing}
E(U_b(t),\DD_{\mu_0^{-1}(1+2\eps_2)})
\geq 2\pi - 2\eps_2 \text{ and } \max_{\abs{y} \leq \mu_0^{-1} }E(U_b(t),\DD_{\mu_0^{-1}(1-2 \eps_2)}(y)) \leq 2\pi + 2\eps_2
\eeq
remain valid  on all of $[T_{\max},\infty)$. Hence, as $t\to \infty$, the maps $U(t)$ converge smoothly to a degree $1$ harmonic map $U(\infty)$ which satisfies  
\eqref{est:E-small-after-sing} and, thanks to 
\eqref{est:Loj-Topping} and 
\eqref{est:speed-by-de}, 
$$ \norm{U(T_{\max})-U(\infty)}_{L^2(\Om_\infty, G_\infty)}\leq \int_{T_{\max}}^\infty \norm{\pt U}_{L^2(\Om_\infty, G_\infty)} dt\leq C (E(U(T_{\max}),G_\infty)-4\pi)^{\half}\leq C\de_v^{\half}.$$
Irrespective of whether  $T_{\max}$ is finite or infinite, we can thus conclude that $U_\infty:=\lim_{t\to \infty} U(t)$ is a harmonic map from the punctured sphere $(\Om_\infty,G_\infty)$ to $S^2$ which satisfies \eqref{est:E-small-after-sing} and 
\beq
\label{est:dist-L2-infinite}
\norm{v-U_\infty}_{L^2(\Om_\infty, G_\infty)}\leq C\de_v^{\half}.
\eeq
We now note that 
\eqref{claim:metrics-identical} (applied for $\mu_1=\mu_0$) ensures  that
 $$G_\infty\equiv G(0)\equiv \bar g_{\mu_0,b}=
F_b^*g_{\pi,\mu_0} \text{ on } B_{R_2}(b),$$
meaning that we can  map
$ (\Om_\infty, G_\infty)$ isometrically to $(\R^2,g_{\pi,\mu_0})$ using a diffeomorphism that is given by the identity in the usual conformal coordinates on $B_{R_2}(b)\subset \Om_\infty$. The standard description of degree $1$ harmonic maps from $\R^2$ to $S^2$ hence allows us to write $U_\infty$ in the usual coordinates on $B_{R_2}(b)$ as 
\beq 
U_{\infty,b}:= U_\infty\circ F_b^{-1}=\RR \pi(\la(x-x_0)) \text{ for some } x_0\in \R^2,  \la>0 \text{ and } \RR\in SO(3) \label{eq:writing-Uinfty}\eeq
and thus deduce that 
\beq 
\label{est:sunny-1}
\| v_b -   \RR \pi(\la(x-x_0)) \|_{L^2(\DD_{r_2}, g_{\pi,\mu_0})} \leq C \de_v^{\half}.\eeq
Since the energy estimate \eqref{est:E-small-after-sing} also applies for $U_{\infty,b}$
we immediately see that 
\beq 
\label{est:la-a-ok}
\abs{x_0}\leqs \mu_0^{-1} \text{ and } 
\la\sim \mu_0 \text{ and hence in particular } \abs{x_0}+\la^{-1}\leqs \de_v^\half,
\eeq
compare Remark \ref{rmk:apriori-mu-zero}.

We now prove that the claims of the theorem holds for this choice of $\la$, $\RR$ and  $a := F_b^{-1}(x_0)$, noting that it suffices to carry out this argument
in the case where $\RR=\text{Id}$ (as we can otherwise replace $v$ by $\RR^{-1} v$) and that 
 \eqref{claim:la} already follows from \eqref{est:la-a-ok}. 
 
 We begin by proving that $v_a:=v\circ F_a^{-1}$
satisfies 
\beqa
\label{est:weighted-L2}
\| v_a - \pi_\la \|_{L^2(\DD_{\iota},g_{\pi,\la} )} \leq C\de_v^{\half}.
\eeqa
To see this we note that since 
\eqref{est:la-a-ok} implies that $d_{g_\Si}(a,b)\leqs \la^{-1}$ is small, 
the transition map between the conformal coordinate charts at $a$ and $b$ is essentially given by a translation, namely so that 
$F_{b,a}:=F_a\circ F_b^{-1}=Id-x_0+\text{err}_{b,a}$ for an error term that
satisfies
$\abs{\err_{b,a}(x)}\leqs d_{g_\Si}^2(a,b)\abs{x} +d_{g_\Si}(a,b)\abs{x}^2\leq \la^{-1} \abs{x}$, see \eqref{est:err-bound}.
As $\abs{\na \pi_\la(x)}\sim \abs{\na \pi_\la(\tilde x)}$ for all $x,\tilde x$ with $\abs{x-\tilde x}\leqs \la^{-1}$, so in particular for points $\tilde x$ on the segment connecting  $x-x_0$ and $F_{b,a}(x)$, we can hence bound 
$$ \abs{\pi_\la(F_{b,a}(x))- \pi_\la(x-x_0) }\leqs 
\abs{ \err_{b,a}(x)}  \abs{\na\pi_\la(x)} \leqs \la^{-1}\abs{x
 }  \abs{\na\pi_\la(x)}\leqs \la^{-1}.
$$
As \eqref{est:la-a-ok} ensures that 
$\bar g_{\mu_0,b}\sim \bar g_{\la,a}$ and that $B_{R_{\Si}(\half r_2)}(a)\subset B_{R_{\Si}(r_2)}(b)$ we can combine this bound with \eqref{est:sunny-1} and \eqref{est:la-a-ok} to deduce that 
\beqas
\| v_a -   \pi_\la \|_{L^2(\DD_{\half r_2},g_{\pi,\la})} 
&=\| v -   \pi_\la\circ F_a \|_{L^2(B_{R_\Si(\half r_2)}(a),\bar{g}_{\la,a})}
\\ &\leqs \| v -   \pi_\la\circ F_a \|_{L^2(B_{R_\Si(r_2)}(b),\bar{g}_{\mu_0,b})}  = \| v_b -   \pi_\la\circ F_{b,a} \|_{L^2(\DD_{r_2},g_{\pi,\mu_0})} 
\\
& \leqs \| v_b -   \pi(\la(x-x_0)) \|_{L^2(\DD_{r_2},g_{\pi,\mu_0})} + \la^{-1} \leq C\de_v^{\half}.
\eeqas
Since $\Area_{g_{\pi,\la}}(\DD_\iota\setminus \DD_{\half r_2})\leqs \la^{-2}\leqs \de_v$, compare \eqref{est:pmu-bulk} and \eqref{est:la-a-ok}, and since we can trivially bound $\norm{v_a-  \pi_\la}_{L^\infty}\leq 2$ this yields the 
estimate \eqref{est:weighted-L2} claimed above, i.e. the $L^2$ version of the $H^1$-estimate \eqref{claim:thm-qs1} claimed in the theorem. 

To carry out the proof of the remaining claims we now show that 
the estimate 
\beq \label{claim:I-main-proof}
\int_{\Si} \abs{\na v-\na \bar z}^2 dv_{g_\Si}\leq C\de_v
\eeq
holds true for the 
 map $ \bar z:=\bar z_{\la,a}:\Si\to S^2$ that was constructed by the first author in \cite{R-Loj-1}, and whose precise definition is recalled in Section \ref{subsec:sing-models}.
 This degree $1$ map $\bar z:\Si\to S^2$ is essentially given by  $\pi_\la $ in the usual coordinates on $B_{I}(a)$ in the sense that 
\beq \label{est:sing-model-conf} 
\| \bar{z}_a -  \pi_\la \|_{C^1(\DD_{\iota})} \leqs \la^{-1} \quad \text{and} \quad \abs{\Delta \bar{z}_a - \Delta  \pi_\la} \leqs  \la^{-1}\abs{\nabla \pi_\la}+\la^{-2} ,\eeq
and so that $\norm{\bar z-\pi(\infty)}_{C^2(\Si \setminus B_{I}(a))}\leqs \la^{-1}$ and 
$\norm{\Delta_{g_\Si} \bar{z}}_{C^0(\Si \setminus B_{I}(a))} \leqs \la^{-2}$, compare \cite[Section 3]{R-Loj-1}, in particular estimate (3.15). 
Here we continue to use 
the convention that quantities on $\Si$ are computed using  $g_\Si$, while quantities on discs are computed using $dx^2$ unless indicated otherwise. 

Similar to \cite{Topping-deg-1} and \cite{R-Rig-23} we now estimate 
\beqa\label{est:I-main-proof}
I 
&:=  \int_{\Si} \abs{\na v-\na \bar z}^2= \int_{\Si} \nabla (v-\bar{z})\na(v+\bar{z}-2\bar z) = 2\int_{\Si} \Delta\bar{z} \cdot (v - \bar{z})  + 2( E(v) - E(\bar{z})) \\
& \leq 2\int_{\DD_{\iota}} \Delta \bar z_a \cdot (v_a - \bar{z}_a) +4 
\norm{\Delta \bar{z} }_{L^\infty(\Si \setminus B_{I}(a))}+ 2 \de_v   \\
&\leq 2\int_{\DD_{\iota}}  \Delta \pi_\la \cdot (v_a - \bar{z}_a) +C\la^{-1} \int_{\DD_{\iota}} \abs{\nabla \pi_\la} \abs{v_a - \bar{z}_a} + C \la^{-2} +2 \de_v \\
&\leq -2\int_{\DD_{\iota}} \abs{\na\pi_\la}^2\pi_\la \cdot (v_a - \bar{z}_a)
+C \| v_a - \bar{z}_a\|_{L^2(\DD_{\iota},g_{\pi,\la} )}^2 +C\la^{-2} +2\de_v.
\eeqa 
Since  $v_a$, $\bar z_a$ and $\pi_\la$ all map into $S^2$ we have 
pointwise estimates of 
$\abs{P_{\pi_\la}^{\perp}(v_a-\pi_\la)} \leqs  \abs{v_a- \pi_\la}^2$ and $\abs{P_{\pi_\la}^{\perp}(\bar{z}_a-\pi_\la)} \leqs\abs{\bar{z}_a- \pi_\la}^2\leqs \la^{-2}$ for the projections of these vectors onto the corresponding normal space $T^{\perp}_{\pi_{\la}}S^2=\text{span}(\pi_\la)$.  
Combined with \eqref{est:sing-model-conf}, \eqref{est:I-main-proof} 
and the already established bounds \eqref{est:weighted-L2}
and $\la^{-1}\leqs \de_v$ this yields that 
$I\leqs \| v_a - \pi_\la\|_{L^2(\DD_{\iota},g_{\pi,\la} )}^2 +\la^{-2} +\de_v\leqs \de_v$ as claimed.

From  \eqref{claim:I-main-proof}, \eqref{est:sing-model-conf} and the conformal invariance of the energy we immediately deduce that
$$ \norm{v_a -  \pi_{\la}}_{\dot H^1(\DD_{\iota},g_{\pi,\la}) }= \norm{\na (v_a -  \pi_{\la})}_{L^2(\DD_\iota)}\leqs 
\norm{\na (v_a -  \bar z_a)}_{L^2(\DD_\iota)}+\norm{\na (\bar z_a -  \pi_{\la})}_{L^2(\DD_\iota)}
\leqs \de_v^\half 
,$$ thus completing the proof of \eqref{claim:thm-qs1}, and obtain \eqref{claim:constant-H01} as 
\beqa 
\|  \na v\|_{L^2(\Sigma \setminus B_{R}(a) ,g_\Si)} &\leq 
\norm{\na (v -  \bar z)}_{L^2(\Sigma \setminus B_{R}(a) ,g_\Si)}+ \| \na \bar z\|_{L^2(\Sigma \setminus B_{R}(a) ,g_\Si)} \\ &\leq C\de_v^\half+C\la^{-1} +\norm{\na \pi_\la}_{L^2(\DD_\iota\setminus \DD_{r_\Si(R)})}
\\
&\leq C \de_v^\half+C(1+r_\Si(R)^{-1})\la^{-1}
\leqs R^{-1} \de_v^\half, \qquad \text{for all $R \in (0,I)$. }
\eeqa
To establish \eqref{claim:constant-L2} we 
first note that 
$\big|\int_{\DD_\iota} [\pi_\la(x)-\pi(\infty) ]\rho_\Si^2 dx\big|\leqs \int_{\DD_\iota}(1+\la^2\abs{x}^2)^{-1} dx\leqs \la^{-2}\log\la\leq \la^{-1},$
as  $\rho_\Si(x)=\rho_\Si(\abs{x})\sim 1$, 
$\pi^{1,2}(\infty)=0=\int_{\partial \DD_r}\pi_\la^{1,2}$ for every $r\in (0,\iota)$ and $\abs{\pi_\la^3(x)-\pi(\infty)}\leqs (1+\la^2\abs{x}^2)^{-1}$. 
 As  $\bar z_a=\pi_\la+O(\la^{-1})$ on $\DD_\iota$ and $\bar z=\pi(\infty)+O(\la^{-1})$ on $\Si\setminus B_{I}(a)$  this ensures that 
$m_{\bar z}:= \fint_\Si \bar z dv_{g_\Si}=\pi(\infty)+O(\la^{-1})$. 
 Setting $m_v:= \fint_\Si v dv_{g_\Si}$, we can thus combine \eqref{claim:I-main-proof} with the Poincar\'e inequality on the fixed surface $(\Si,g_\Si)$ 
to bound 
\beqas
\norm{v-m_v}_{L^2(\Si,g_\Si)}
&\leq 
\norm{v-\bar z-(m_v-m_{\bar z})}_{L^2(\Si,g_\Si)}
+
\norm{\bar z-m_{\bar z}}_{L^2(\Si,g_\Si)}\\
&\leqs
\norm{\na(v-\bar z)}_{L^2(\Si,g_\Si)} 
+
\norm{\bar z-\pi(\infty)}_{L^2(\Si,g_\Si)}+\la^{-1}\\
&\leqs 
\de_v^\half
+\norm{\pi_\la-\pi(\infty)}_{L^2(\DD_\iota, \rho_{\Si}^2 dx^2)}+\la^{-1}  \leqs  \de_v^\half+\la^{-1} (1+(\log\la)^\half) \leqs \de_v^\half \abs{\log \de_v}^\half.
\eeqas
As $\abs{v}=1$ on $\Si$ this also ensures that 
$\abs{1-\abs{m_v}}
\leq (\text{Area}_{g_\Si}(\Si))^{-1} 
\norm{v-m_v}_{L^1(\Si,g_\Si)} 
\leqs \de_v^\half \abs{\log \de_v}^\half 
$
and hence that \eqref{claim:constant-L2} holds for $p:=\frac{m_v}{\abs{m_v}}$. 

This completes the proof of our main theorem, up to the proof of the \Loj estimates from Theorem \ref{thm:Loj-NHD} which is carried out in the remaining part of this paper.

\section{Proof of Theorem \ref{thm:Loj-NHD}}\label{sec:proof-LojEst}
In this section we prove the \Loj estimates stated in Theorem \ref{thm:Loj-NHD}, based on some technical estimates whose proof will be given in the subsequent 
Section \ref{sec:technical-proofs}. 
In the current and the next section we will use the 
 convention that all statements are to be understood to hold true provided the numbers $\la_0>1$ and $\eps_1>0$ that appear in the theorem are chosen sufficiently large, respectively small, and will furthermore use the shorthand $o(1)$ to denote quantities that can be made smaller than any given positive constant by choosing $\eps_1$ (and later also $\eps_0$) small enough and $\la_0$ large enough.

The key idea of the proof is to use dimension reduction techniques and carry out a precise analysis of the energy and its variation on a suitably chosen finite dimensional set of models following the approach of \cite{MRS}, where in the present setting the models are given by suitably chosen pairs of maps and metrics. 
For this we want to work with a 
$6$ parameter family 
\beq
\label{def:Z}
\ZZ:=\{\RR \bar z_{\la,a}, \quad \RR\in SO(3), \,a\in \Si,\, \la\geq 2 \la_0 \}
\eeq
of maps which are so that 
\beq
\label{eq:z-approx}
\bar z_{\la,a}\approx \pi_\la \circ F_a \text{ on } B_{I}(a) \text{ and } \bar z_{\la,a}\approx \pi(\infty)\text{ on } \Si\setminus B_{I}(a).
\eeq
We note that maps $\bar z_{\la,a}:\Si\to S^2$ for which \eqref{eq:z-approx} holds up to carefully chosen modifications of order $\la^{-1}$ in $C^1(\Si,g_\Si)$ were constructed by the first author in \cite{R-Loj-1} in a way that yields the crucial bounds 
\beq  \label{est:energy-old}
\abs{E(\bar{z}_{\lambda,a})-4\pi} \leqs\la^{-2}, 
\qquad 
 -\tfrac{d}{d\lambda} E(\bar{z}_{\lambda,a}) \sim   \lambda^{-3}
 \eeq
and
\beq \label{est:energy-old-var}
 \abs{dE(\bar z_{\la,a})(w)}\leqs  \la^{-2}(\log\la)^\half \norm{w}_{H^1(\Si,\bar g_{\la,a})} \text{ for all }w\in \Gamma^{H^1}(\bar z_{\la,a}^* TS^2).
\eeq
Another key feature of $\ZZ$  concerns the behaviour of the second variation of the energy 
\beq
\label{def:2nd-var}
d^2 E(u)(v,w) := \ddeps \tfrac{d}{d \delta} E(\pi_{S^2}(u + \eps v+\de w))
=\int_{\Si} \nabla v \cdot \nabla w - \abs{\nabla u}^2 v\cdot w \,dv_{g_\Si}, v,w \in \Ga^{H^1 \cap L^{\infty}}( u^* TS^2).
\eeq
To make this precise, we note that we are working in a setting where the bubbles $\om:S^2\to S^2$ which we glue onto constant maps are minimisers, rather than more general critical points as considered in \cite{R-Loj-1,R-Loj-2}, and that the kernel of $d^2E(\om)$ coincides with the tangent space to the set of these bubbles, i.e. degree $1$ M\"{o}bius maps. Thus the construction of the maps $\bar z_{\la,a}$ in \cite{R-Loj-1} ensures that 
the restriction of $d^2E(\bar z_{\la,a})$ to the 
 $H^1(\Si,\bar g_{\la,a})$-orthogonal complement $T_{\bar z}^\perp \ZZ$ 
 of 
 $T_{\bar z} \ZZ$ in $\Gamma^{H^1}(\bar z^* TS^2)$
 is uniformly positive definite. 

\begin{rmk}\label{rmk:E-var}
While we will only ever evaluate $d^2 E(u)(v,w)$ for $v,w$ which are tangent to $S^2$ at $u$, it will at times be useful to extend the definition of $dE(u)$ to general vector fields, and we will do this by extending $dE$ by zero in normal directions, i.e. setting $dE(u)(f) := dE(u)(P_u f)$. 
We note that this allows us to write  
 $\ddeps E(\pi_{S^2}(u+\eps f)) = dE(u)(f) $ for general $f \in H^1(\Si,\R^3)$ 
and hence to compute
\beq
\label{rmk:energy-var}
\ddt (dE(u_t)(f_t)) = d^2E(u_t)(\partial_t u_t, f_t) + dE(u_t)(\pt f_t) ,
\eeq
for any 
family of maps $u_t: \Si \to S^2$ and corresponding tangent fields $f_t \in \Ga^{H^1}(u_t^* TS^2)$. We note that for vector fields $f_t:= P_{u_t} f$ which are obtained 
by projecting a fixed $f\in H^1(\Si,\R^3)$
the second term 
$dE(u_t)(\pt f_t)= dE(u_t)(P_{u_t}(\pt (P_{u_t} f)))$ cannot be expected to vanish unless $f$ happens to tangential to $T_{u_t} S^2$ at the corresponding time $t$.
\end{rmk}

In addition to these maps $\bar z_{\la,a}$ we will use 
the weighted metrics $\gbarmb$ 
defined in Section \ref{sec:metrics}. 
While we will not impose that the parameters $(\la,a)$ of the map and $(\mu,b)$ of the metric coincide, we restrict our attention to pairs of parameter which are compatible in the sense that 
\beq
\label{est:compatible-for-free}
 \la^{-1}\mu \in [1-\eps_0,1+\eps_0] \text{ and }  d_{a,b}:= d_{\Si}(a,b) \leq \eps_0 \la^{-1},
\eeq
 for a small number $\eps_0>0$ for which we use the convention described at the start of the section. We note that 
  this compatibility condition and the construction of the maps $\bar z_{\la,a}$ in particular ensure that 
\beq
\label{est:na-z-1}
\abs{\na_{g_\Si} \bar z_{\la,a}}\leqs \bar{\rh}_{\la,a} \text{ and }
\bar{\rh}_{\la,a}\sim \bar{\rh}_{\mu,b}.
\eeq
As in the rest of the paper it will be crucial to view the energy as a function of both map and metric and to exploit the flexibility that is provided by working either with pairs $(\bar z_{\la,a},\bar g_{\mu,b} ) $ for which the metric is conformal to $g_{\Si}$ or their pull-backs
 $(
 \zmbla ,g_{\mu,b})=T_{\mu,b}^*(\bar z_{\la,a},\bar g_{\mu,b} )$, $T_{\mu,b}$ as in Section \ref{sec:metrics}, as convenient. 

We note that \eqref{est:energy-old-var} already provides a bound on $dE(\zmbla,g_{\mu,b})(f,0)=dE(\bar z_{\la,a},\bar g_{\mu,b})(\bar f,0)=dE(\bar z_{\la,a})(\bar f)$ and will furthermore show the following estimate for the second variation

\begin{lemma}\label{lem:sec-var-map}
 Let  $a,b,\mu,\la$ be parameters which are compatible as described in \eqref{est:compatible-for-free}. Then we can bound 
 \beq
\label{est:sec-var-map}
\abs{d^2E(z, g)(\peps (z_\eps, g_\eps),(f,0))} \leq C \big[ 
\la^{-3} (\log \la)^{\half}\abs{\peps \la}+\la^{-1} \abs{\peps a}+\la^{-3} \abs{\peps \mu_\eps}\big],
\eeq 
for any variation 
 $(z_\eps,g_\eps)=T_{\mu_\eps,b}^*(\bar z_{\la_\eps,a_\eps} ,\bar g_{\mu_\eps,b})$
 and any $f\in\Gamma^{H^1}(z^* TS^2)$ with $\| f \|_{H^1(\Si,g)}=1$. 

\end{lemma}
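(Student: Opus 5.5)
By linearity in the variation $\peps(z_\eps,g_\eps)$, I will split it into three pieces and bound each separately. The pieces are a variation of $\la$ alone, a variation of $a$ alone, and a variation of $\mu$ alone. In the last of these both the map $\zmbla=T_{\mu,b}^*\bar z_{\la,a}$ and the metric $g_{\mu,b}$ change.

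\emph{Variations of $\la$ and $a$.} Here $\mu$ is fixed, so everything can be pushed forward by the fixed diffeomorphism $T_{\mu,b}$, which changes no geometric quantity (Remark \ref{rmk:switch-viewpoint}). The quantity becomes $d^2E(\bar z_{\la,a})(\peps \bar z_{\la_\eps,a_\eps},\bar f)$ computed with respect to $g_\Si$ by conformal invariance, with $\norm{\bar f}_{H^1(\Si,\bar g_{\mu,b})}=1$. Since $\bar g_{\mu,b}\sim\bar g_{\la,a}$ by \eqref{est:compatible-for-free} and \eqref{est:na-z-1}, it suffices to test against $H^1(\Si,\bar g_{\la,a})$ unit fields.

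I will then integrate by parts to write
\[
d^2E(\bar z)(w,\bar f)=\int_\Si \big(-\Delta w-\abs{\na \bar z}^2w\big)\cdot \bar f ,
\]
up to the normal-projection correction of Remark \ref{rmk:E-var}, whose size is of order $\abs{dE(\bar z)}$ and is therefore controlled by \eqref{est:energy-old-var}. The tangent vectors $w=\partial_\la\bar z_{\la,a}$ and $w=\partial_a\bar z_{\la,a}$ are, on $B_I(a)$, Jacobi fields of the bubble $\pi_\la$ up to $O(\la^{-1})$-type corrections inherited from \eqref{est:sing-model-conf}. They therefore almost lie in the kernel of the linearised operator. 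Using the explicit construction of $\bar z_{\la,a}$ from \cite{R-Loj-1}, I will bound the weighted norm $\norm{\bar\rho_{\la,a}^{-1}(\Delta w+\abs{\na\bar z}^2w)}_{L^2}$. The expected bounds are $\la^{-3}(\log\la)^{1/2}$ for $\partial_\la$, which matches the scaling of the $\la^{-2}(\log\la)^{1/2}$ bound in \eqref{est:energy-old-var} differentiated in $\la$, and $\la^{-1}$ for $\partial_a$. The alternative route, which is probably cleaner, is to differentiate the identity $dE(\bar z_{\la,a})(P_{\bar z}\bar f)$ in the parameters using \eqref{rmk:energy-var}. This gives $d^2E(\bar z)(\partial\bar z,\bar f)=\partial(dE(\bar z_{\la,a})(P_{\bar z}\bar f))-dE(\bar z)(\partial P_{\bar z}\bar f)$, which reduces the task to parameter derivatives of the explicit first-variation formula of \cite{R-Loj-1}.

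\emph{Variation of $\mu$.} This is the step I expect to be the main obstacle. I will write $\peps(z_\eps,g_\eps)=(-\na_X z,\ \partial_\mu g_{\mu,b})\,\peps\mu$, where $X$ is the vector field generating $T_{\mu,\mu+\eps}$. On $B_{R_1}(b)$ the field $X$ is $-\mu^{-1}\phi_\GG(x)x$ and it vanishes elsewhere. The pure Lie-derivative part $(-\mathcal L_X z,-\mathcal L_X g)$ is a gauge direction, so $E$ is invariant along it. Differentiating $dE(\phi_t^*z,\phi_t^*g)(\phi_t^*f,0)$ in $t$ therefore expresses $d^2E$ in that direction through $dE(z,g)(\mathcal L_X f,0)$, which \eqref{est:energy-old-var} controls. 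What remains is the conformal part of $\partial_\mu g$, namely the contribution of $\partial_\mu\bar g_{\mu,b}$ pulled back. Since $\bar z$ does not depend on $\mu$ and the Dirichlet energy is conformally invariant, this piece contributes zero.

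The genuine error is therefore $dE(z,g)$ tested against $\mathcal L_X f$, where $\abs{X}\lesssim\mu^{-1}\abs{x}$ is supported on $B_{R_1}\setminus B_{R_2}$. In this region $\bar\rho\sim\mu^{-1}$ and $\abs{\na z}\lesssim\mu^{-1}$ by \eqref{est:simple-pi-estimates}, so the residual size is of order $\la^{-3}$. I also anticipate needing a direct estimate of $\int_{\supp X}\langle\na(\na_X z),\na f\rangle$, using the $O(\mu^{-2})$ area bound on this annulus from the proof of Proposition \ref{prop:metrics-main}. Collecting the three bounds gives \eqref{est:sec-var-map}.
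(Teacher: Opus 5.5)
\textbf{There is a genuine gap in your $\mu$-variation step.}

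Your reduction of the $\mu$-variation is correct and is exactly the paper's. Diffeomorphism invariance turns the Lie-derivative part into $dE$ tested against $\na_X f$; in the paper this is $\ddeps[dE(\bar z_\eps)(\bar f\circ T_{\mu_\eps,\mu,b})]$. Conformal invariance kills the $\pmu\bar g_{\mu,b}$ part. So one must bound $dE(\bar z_{\la,a})(\peps\bar f_\eps)$, with $\bar f_\eps=\bar f\circ T_{\mu+\eps,\mu,b}$, by $\la^{-3}$. The gap is in this last estimate.

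You claim $X$ is supported in $B_{R_1}(b)\setminus B_{R_2}(b)$, where $\bar\rho\sim\mu^{-1}$ and $\abs{\na z}\leqs\mu^{-1}$. This is false. Since $X=-\mu^{-1}\phi_\GG(x)x$ and $\phi_\GG\equiv 1$ on $\DD_{r_2}$, $X$ is the full dilation field $-\mu^{-1}x$ on the entire bubble region. This is consistent with $T_\mu(x)=\mu^{-1}\mu_*x$ there, \eqref{ass:Tmu}.

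What does vanish on the core is $\pmu g_{\mu,b}$, because the Lie-derivative and conformal parts of the metric variation cancel there. The same holds for $\peps S_{\mu,\mu+\eps}$, cf.\ \eqref{est:s-transition}. The map variation does not vanish there. Per unit $\peps\mu$, $z$ moves by a dilation Jacobi field of $H^1$-size $\sim\la^{-1}$, concentrated on the bubble where $\abs{\na\bar z}$ and $\bar\rho$ reach size $\la$.

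So your argument omits precisely the region carrying the main contribution. There, a term such as $\int\langle\na(\na_X z),\na f\rangle$ is only $O(\la^{-1})$ on its own, so the $\la^{-3}$ bound can only come from a cancellation. Also, \eqref{est:energy-old-var} cannot be applied to $dE(z,g)(\mathcal L_X f,0)$. It requires the $H^1$-norm of the test field, and $X\cdot\na f$ is merely $L^2$ for $f\in H^1$.

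\textbf{The missing idea.} Express this term through the tension of the model, and use that $\bar z_{\la,a}$ is harmonic to high order at the bubble core. In coordinates at $a$ one has
\[
\abs{dE(\bar z_{\la,a})(\peps\bar f_\eps)}=\Big|\int_{\DD_{r_1}}\tau(\hat z_{\la,a})\cdot\na\bar f_a\,\peps\hat T_\eps\,dx\Big|,
\qquad \abs{\peps\hat T_\eps(x)}\leqs\la^{-1}(\abs{x}+d_{a,b})\leqs\la^{-2}(1+\la\abs{x}).
\]
The pointwise bound $\abs{\tau(\hat z_{\la,a})}\leqs\la^{-2}+(1+\la\abs{x})^{-3}$ from \eqref{est:tension-disks-RL1} gives $\norm{\tau(\hat z_{\la,a})(1+\la\abs{x})}_{L^2(\DD_{r_1})}\leqs\la^{-1}$. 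Cauchy--Schwarz against the conformally invariant $\norm{\na\bar f}_{L^2}\leqs 1$ then yields $\la^{-3}$. The linear growth of $X$ is absorbed by the cubic decay of the tension.

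Your plans for the $\la$- and $a$-parts are in line with the paper. Note, however, that the $\la^{-3}(\log\la)^{1/2}$ bound does not follow from \eqref{est:energy-old-var} by a scaling heuristic, since a bound on a quantity does not bound its $\la$-derivative. The paper imports it from \cite[Lemma 3.11]{R-Loj-1}, while the $a$-part rests on the weighted estimate \eqref{est:tau-weighted}.
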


We note that $d^2E(z,g)(\peps (z_\eps, g_\eps),(f,0)):=
\ddeps \ddelta E(\pi_{S^2}(z_\eps +\de f),g_\eps) =\ddeps [  dE(z_\eps,g_\eps)(f,0)]
$
 as $f$ is assumed to be tangential to $S^2$ at $z=z_{\eps=0}$, compare Remark \ref{rmk:E-var}.

As the metrics $\bar g_{\mu,b}$ are all conformal to $g_\Si$ and as rotations are isometries we know that 
$ 
E(\RR 
\zmbla,g_{\mu,b})=E(\RR \bar z_{\la,a},\bar g_{\mu,b})=E(\bar z_{\la,a})
$ 
only depends 
on $\la$ and $a$. The dependence on $a$ indeed turns out to be of lower order,  namely so that 
\beq 
\label{est:claim-peps-za}
\abs{\tfrac{d}{d\eps} E(\bar z_{\la,a_\eps})} \leqs \lambda^{-2} \abs{\peps a},
\eeq compare Section \ref{subsec:dEda}. 

Variations for which $\peps \la \sim -1$ and for which $\abs{\peps a_\eps}\leqs o(1) \la^{-1}$ hence result in a change of the energy 
 of 
 \beq \label{eq:change-E-main-dir}
 \ddeps E(z_\eps,g) \sim \la^{-3}.
 \eeq
 While such variations of $\la$ always result in a change of 
$\bar z_{\la,a}$ of order at least $\la^{-1}$, we can ensure that the change of the pulled-back maps and metrics is far smaller as described in the following lemma.

\begin{lemma}
    \label{lemma:special-variation}
    Let  $a,b,\mu,\la$ be parameters which are compatible as described in \eqref{est:compatible-for-free} and let 
    $a_\eps, \mu_\eps, \la_\eps$ be variations that satisfy
    \beq
\label{eqn:v-par-def}
\peps \mu_\eps = -1, \quad
\peps (\mu_\eps^{-1} \lambda_\eps) =0\text{ and } \ppeps(\lambda_\eps F_b(a_\eps)) = 0.
\eeq
Then 
 $(z_\eps,g_\eps)=T_{\mu_\eps,b}^*(\bar z_{\la_\eps,a_\eps} ,\bar g_{\mu_\eps,b})$ satisfies
\beq 
\label{est:special-var}
\| \ppeps z_\eps \|_{H^1(\Si, g)} +\| \ppeps z_\eps \|_{L^{\infty}(\Si)}+ \| \ppeps g_\eps \|_{L^2(\Si, g)} \leqs \la^{-2}.
 \eeq 
\end{lemma}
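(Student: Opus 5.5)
The plan is to treat the metric and map components separately, and to exploit that the variation \eqref{eqn:v-par-def} is designed so that in the rescaled coordinates $S_\mu=\mu T_\mu$ of Remark \ref{rmk:nice-coordinates} the leading order bubble does not move at all. For the metric this is immediate. As $g_\eps=g_{\mu_\eps,b}$ and $\peps\mu_\eps=-1$, we have $\ppeps g_\eps=-\pmu g_{\mu,b}$, so \eqref{est:Met-Var} gives $\|\ppeps g_\eps\|_{L^2(\Si,g)}\sim\mu^{-2}\sim\la^{-2}$, where the last step uses the compatibility condition \eqref{est:compatible-for-free}. The remaining work is to bound $\ppeps z_\eps$ for $z_\eps=\bar z_{\la_\eps,a_\eps}\circ T_{\mu_\eps,b}$.

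\emph{Step 1 (inner region).} I would first work on $B_{R_\GG(\mu)}(b)$, where \eqref{eq:mu-t-agree} says that $\mu_\eps T_{\mu_\eps}$ is independent of $\eps$ in the coordinates $F_b$. Write $y=\mu_\eps T_{\mu_\eps}(x)$. On this region \eqref{eq:z-approx} and \eqref{est:sing-model-conf} give
\beqs
\bar z_{\la,a}\circ F_b^{-1}(w)=\pi\big(\la (w-F_b(a))+\la\,\err_{b,a}(w)\big)+O_{C^1}(\la^{-1}).
\eeqs
Evaluating at $w=T_{\mu_\eps}(x)=\mu_\eps^{-1}y$, the argument of $\pi$ becomes $(\la_\eps\mu_\eps^{-1})y-\la_\eps F_b(a_\eps)$ plus error terms. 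Both $\la_\eps\mu_\eps^{-1}$ and $\la_\eps F_b(a_\eps)$ are $\eps$-independent by \eqref{eqn:v-par-def}, so the principal part of $z_\eps$ is frozen. Only two pieces depend on $\eps$:
\begin{itemize}
\item the transition error $\la\,\err_{b,a}$, which is of size $\la\big(d_{a,b}^2|w|+d_{a,b}|w|^2\big)$;
\item the $O(\la^{-1})$ corrections built into the construction of $\bar z_{\la,a}$ in \cite{R-Loj-1}.
\end{itemize}
Since $\abs{\peps a}\sim\la^{-2}$ (because $\la F_b(a)$ is fixed and $d_{a,b}\leqs\la^{-1}$) and $\abs{\peps\la}\sim1$, differentiating each of these should produce a change of size $\la^{-2}$. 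Here I would use the weighted bound $|\err|\leqs\la^{-1}|x|$ combined with $|\na\pi_\la(x)|\,|x|\leqs1$, as in the proof of \eqref{est:weighted-L2}, to see that the resulting change of $z_\eps$ is $O(\la^{-2})$ pointwise. Its gradient should obey the same bound weighted by $\bar\rho_{\la,a}$.

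\emph{Step 2 (transition annulus and outer region).} On $B_{R_1}(b)\setminus B_{R_\GG(\mu)}(b)$ I would use the chain rule
\beqs
\ppeps z_\eps=(\ppeps\bar z_{\la_\eps,a_\eps})\circ T_\mu+d\bar z\big(\ppeps T_{\mu_\eps}\big),
\eeqs
with $\ppeps T_{\mu_\eps}=\mu^{-1}\phi_\GG\, T_\mu$ by \eqref{def:Tmu}. On this fixed-size annulus we have $|\na\bar z|\sim(\la|x|^2)^{-1}\sim\la^{-1}$, so the second term is $O(\la^{-2})$ in $C^1$.

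For the first term, on $|x|\gtrsim 1$ one has $\pi_\la-\pi(\infty)=O(\la^{-1})$ with $\la$-derivative $O(\la^{-2})$. The corrections from \cite{R-Loj-1} are likewise $O(\la^{-1})$ in $C^2$, and I expect their $\la$- and $a$-derivatives to be of order $\la^{-2}$ and $\la^{-1}$ respectively. Outside $B_{R_1}(b)$ the diffeomorphism $T_{\mu,b}$ is the identity, so only this last bound is needed there.

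\emph{Step 3 (assembling the norms).} The Dirichlet part of $\|\cdot\|_{H^1(\Si,g)}$ is conformally invariant. The $L^2$ part carries the weight $\bar\rho_{\mu,b}^2$, which satisfies $\bar\rho_{\mu,b}\sim\mu^{-1}$ away from $b$ by \eqref{est:pmu-bulk} and has bounded total area, so the pointwise $O(\la^{-2})$ bounds from Steps 1 and 2 yield the $H^1$, $L^\infty$ and $L^2$ estimates in \eqref{est:special-var}.

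\emph{Main obstacle.} I expect the hardest step to be establishing the $\la$- and $a$-derivative bounds on the correction terms in the definition of $\bar z_{\la,a}$ (recalled in Section \ref{subsec:sing-models}), and on the transition maps $F_{b,a}$ (via \eqref{est:err-bound}). These must be shown to improve by a factor $\la^{-1}$ upon differentiation in $\la$, and only to lose $\la$ upon differentiation in $a$. I would attempt this by differentiating the explicit formulas from \cite[Section 3]{R-Loj-1} term by term.
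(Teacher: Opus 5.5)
Your proposal is correct and is essentially the paper's argument. The metric term and the region $\Si\setminus B_{R_1}(b)$ are handled exactly as you do, via Proposition \ref{prop:metrics-main} and \eqref{est:map-var-outside}. On $B_{R_1}(b)$ the paper works in the coordinates $x=T_\mu(F_b(p))$ and writes the bubble part as $\pi\circ F_\eps$ with $F_\eps=\la_\eps F_{b,a_\eps}\circ T_{\mu,\mu_\eps}$. It then splits $\peps F_\eps$ into two pieces. The first is the non-dilational part of the $T_\mu$-deformation, supported on $\abs{x}\geq r_2$, where $d\pi(\la F_{b,a})=O(\la^{-2})$. The second is a part in which \eqref{eqn:v-par-def} cancels all principal terms and leaves only $\peps[\la_\eps \err_{b,a_\eps}(\mu x/\mu_\eps)]$ together with the $\frac{2}{\la}\hatja$ correction. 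This is your inner-disc/annulus dichotomy, organised as a splitting of the variation rather than of the domain.

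The derivative bounds you flag as the main obstacle are already available from \eqref{est:map-var-outside}, \eqref{est:err-bound}, \eqref{est:err-ab-errb} and the explicit form \eqref{eq:map-inside} of $\bar z_{\la,a}$ near $a$. Compute all $C^1$ and $H^1$ quantities after pushing forward by $T_{\mu,b}$, i.e.\ in the paper's coordinates. The reason is that $g_{\mu,b}=T_{\mu,b}^*\bar g_{\mu,b}$ is not itself conformal to $g_\Si$, so your appeal to conformal invariance in Step 3 needs this push-forward. Also, $T_\mu$ need not be uniformly Lipschitz in the $F_b$-chart, so $C^1$ bounds taken there would not suffice.
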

We note that the estimate for  $ \| \ppeps g_\eps \|_{L^2(\Si, g)} $ is an immediate consequence of Proposition \ref{prop:metrics-main} and postpone the proof of the bounds on $\peps z_\eps$ to Section \ref{subsec:proof-lemma-norm-var}.

We note that variations $a_\eps$ of $a$ for which \eqref{eqn:v-par-def} is satisfied are guaranteed to exist and to be so that 
\beq\label{est:spe-var-a} \abs{\peps a}\leq o(1)\la^{-2}\eeq
as 
 $dF_b(b)$ is the identity and as $d_{a,b}\leq \eps_0 \la^{-1} =o(1)\la^{-1}$, compare \eqref{est:compatible-for-free}.

We recall that $T_{\mu,b}$ is given by a dilation $x\mapsto \mu^{-1}\mu_* x$ in the usual conformal coordinates on  $B_{R_2}(b)$ and note that the transition map 
$F_{b,a}=F_a\circ F_b^{-1}$ between the local coordinate charts at $b$ and $a$ is well approximated by the translation $x-F_b(a)$, compare \eqref{def:err-ab}. Thus 
$\zmbla \circ F_b^{-1}$  is
well approximated by the function
$  \pi(\la(\mu^{-1}\mu_* x - F_b(a)))$  whose variation vanishes if we vary the parameters as specified in  
\eqref{eqn:v-par-def}.

We postpone the proofs of \eqref{est:claim-peps-za} and of these two lemmas to Sections \ref{subsec:dEda}-\ref{subsec:sec-var} and first  complete the proof of our key \Loj estimate. 

\begin{proof}[Proof of Theorem \ref{thm:Loj-NHD}]
Let $\bar u:\Si\to \Si$ be as in the theorem, i.e. so that there exists
a point $b\in \Si$, a rotation $\RR_0\in SO(3)$ and a scale $\mu\geq \la_0$ for which 
\eqref{ass:LE-map-1} and \eqref{ass:LE-map-2} hold. 

We let $\bar z =\RR \bar z_{\la,a}\in\ZZ$ be an element of $\ZZ$ which minimises $(\tilde \RR,\tilde \la,\tilde a)\mapsto \norm{\bar u-\tilde \RR \bar z_{\tilde \la,\tilde a}}_{H^1(\Si,\bar g_{\tilde \la,\tilde a})}$, noting that the existence of such a map $\bar z$ is ensured by Lemma 4.2 of  \cite{R-Loj-1} which furthermore guarantees that  $\norm{\bar u-\bar z}_{L^\infty(\Si)}=o(1)$.

As $\norm{\bar z-\RR_0 \bar z_{\mu,b}}_{L^\infty(\Si)}\leq \norm{\bar u-\RR_0 \bar z_{\mu,b}}_{L^\infty(\Si)}+\norm{\bar u-\bar z}_{L^\infty(\Si)}=o(1)$, compare \eqref{ass:LE-map-1}, \eqref{ass:LE-map-2} and  \eqref{eq:z-approx}, 
the parameters $(\la,a)$ of this nearest model $\bar z=\RR\bar z_{\la,a}$ are  compatible with the given parameters $(\mu, b)$  in the sense of \eqref{est:compatible-for-free}
and also $\norm{\RR-\RR_0}=o(1)$.

The two main steps in the proof of the theorem are now to establish that
\beqa
\label{claim:step1}
\norm{\bar u-\bar z}_{H^1(\Si,\barg_{\la,a})}
\leqs 
\norm{\tau_{\bar{g}_{\mu,b}}(\bar{u}) }_{L^2(\Si, \bar{g}_{\mu,b})} +\la^{-2}(\log\la)^\half
\eeqa
for this $\bar z$ 
as well as that
\beq
\label{claim:step2}\la^{-1}\leqs \norm{\na \EE_b(u,g_{\mu,b})}_{L^2(\Si,g_{\mu,b})}+\norm{\bar u-\bar z}_{H^1(\Si,\barg_{\la,a})} \text{ for } u:= \bar u\circ T_{\mu,b},
\eeq
recalling from \eqref{def:EG} that $\norm{\na \EE_b(u,g_{\mu,b})}_{L^2(\Si,g_{\mu,b})}^2= \| \tau_{\bar{g}_{\mu,b}}(\bar{u}) \|_{L^2(\Sigma,\bar{g}_{\mu,b})}^2+\frac14\| \bar P_{\bar{g}_{\mu,b}}(k(\bar{u},\bar g_{\mu,b}))\|_{L^2(\Sigma,\bar g_{\mu,b})}^2$. 

We will provide the proofs of these key estimates below and first explain how they imply the theorem.

To this end, we note that the metrics 
$\gbarla$ and $\gbarmb$ are conformal to each other and, thanks to  \eqref{est:compatible-for-free}, so that $\gbarla \sim \gbarmb$. Hence $\norm{\tau_{\bar{g}_{\la,a}}(\bar{u}) }_{L^2(\Si, \bar{g}_{\la,a})}\sim 
\norm{\tau_{\bar{g}_{\mu,b}}(\bar{u}) }_{L^2(\Si, \bar{g}_{\mu,b})}\leq \norm{\na \EE_b(u,g_{\mu,b})}_{L^2(\Si,g_{\mu,b})}$. 

The above two key estimates \eqref{claim:step1} and \eqref{claim:step2}
hence imply that 
\beqa
\label{claim:step1-2-conseq}
\norm{\bar u-\bar z}_{H^1(\Si,\barg_{\la,a})}+\la^{-1}
\leqs \norm{\na \EE_b(u,g_{\mu,b})}_{L^2(\Si,g_{\mu,b})}.
\eeqa
As \eqref{est:compatible-for-free} in particular ensures that $\la\sim \mu$, we thus immediately obtain the claimed  
bound \eqref{est:LE-mu} on $\mu^{-1}$.

Similarly, the  
claimed distance \Loj estimate \eqref{est:dist-bound-1} immediately follow from the above bound on $\bar u-\bar z$ since \eqref{eq:z-approx} holds up to errors  of order $O(\la^{-1})$ in $C^1(\Si,g_\Si)$.

In order to show the final claim \eqref{est:LE-energy} we set $\bar w:= \bar u -\bar z$ and interpolate between $\bar u$ and $\bar z$ using the family of maps  
$ \bar u_t=\pi_{S^2}(\bar u+t\bar w) $. 
We note that $\bar w_t:= \partial_t \bar u_t=\abs{\bar u+t\bar w}^{-1} P_{\bar u_t}(\bar w)$, in particular $\bar w_0=P_{\bar u} \bar w\in \Gamma(\bar u^*TS^2)$, and that a short calculation, using \eqref{est:na-z-1} and $\abs{\bar u}=1$, allows us to bound 
\beq \label{est:w-hallo-1} \abs{\na (\bar u_t-\bar u_{s})}\leqs  \bar \rho_{\mu,b}\abs{\bar w}+\abs{\na \bar w},\quad \abs{\bar w_t-\bar w_s}\leqs \abs{\bar w}^2 \text{ and } \abs{\na(\bar w_t-\bar w_s)}\leqs \bar \rho_{\mu,b} \abs{\bar w}^2+\abs{\na \bar w} \abs{\bar w}
\eeq
for any $s,t\in [0,1]$. 
We now write
\beq
\label{est:step3}
E(\bar u)-4\pi=E(\bar z)-4\pi+dE(\bar u)(P_{\bar u}(\bar w))+\err_1
\eeq 
for
$\err_1:= \int_0^1dE(\bar u_t)(\bar w_t) -dE(\bar u_0)(\bar w_0) 
dt =\int_0^1\int_{\Si}\na \bar u_t\na \bar w_t-\na \bar u_0 \na \bar w_0 dv_{g_\Si} dt$ which, by \eqref{est:w-hallo-1}, is bounded by $\abs{\err_1}\leqs  \int_{\Si}\abs{\nabla \bar w}^2+\bar \rho_{\mu,b}^2 \abs{\bar w}^2 dv_{g_\Si}=
\norm{\bar w}_{H^1(\Si,\bar g_{\mu,b})}^2\sim \norm{\bar w}_{H^1(\Si,\bar g_{\la,a})}^2$. 

As the model maps are chosen so that \eqref{est:energy-old} holds we can thus use \eqref{claim:step1-2-conseq} to bound 
\beqs
E(\bar u)-4\pi\leq C \la^{-2} +\norm{\tau_{\bar g_{\mu,b}}(\bar u)}_{L^2(\Si,\bar g_{\mu,b})}\norm{P_{\bar u}\bar w}_{L^2(\Si,\bar g_{\la,a})}
+
C\norm{
\bar w }_\honebarmb^2\\
\leqs 
\norm{\na \EE_b(u,g_{\mu,b})}_{L^2(\Si,g_{\mu,b})}^2.
\eeqs 
This establishes the final claim \eqref{est:LE-energy} and hence reduces the proof of the theorem to the proof of the key estimates \eqref{claim:step1} and \eqref{claim:step2} which we carry out in the rest of this section.

\textit{Proof of  \eqref{claim:step1}:}\\
For the proof of this estimate we can closely follow the arguments used in the proof of \cite[Lemma 4.4]{R-Loj-1} which indeed simplify considerably since 
the restriction of $d^2E(\bar z_{\la,a})$ to the $H^1$-orthogonal complement
 $T^\perp_{\bar z} \ZZ$ of $T_{\bar z} \ZZ$  in $ \Gamma^{H^1}(\bar z^*TS^2)$
 is uniformly positive definite instead of just uniformly definite. 
 To apply this we first recall that $\bar w$ is nearly tangential to $S^2$ at $\bar z=\bar u_1$, with \eqref{est:w-hallo-1} ensuring in particular that $\bar w_1=P_{\bar u_1} \bar w$ is so that $\norm{\bar w_1-\bar w}_{H^1(\Si, \gbarla )}=o(1) \norm{\bar w}_{H^1(\Si, \gbarla )}$. As $\bar z=\RR\bar z_{\la,a}$ was chosen so that 
$\norm{\bar u- \RR \bar z_{\la,a}}_{H^1(\Si,\bar g_{\la,a})}$ is minimal, we furthermore get that $\bar w=\bar u-\bar z$ is nearly $H^1(\Si,\gbarla)$-orthogonal to $T_{\bar z}\ZZ$, i.e. so that the $H^1$-orthogonal projection satisfies 
$\norm{P_{H^1}^{T_{\bar z}\ZZ} \bar w}_{H^1(\Si, \gbarla )}=o(1) \norm{\bar w}_{H^1(\Si, \gbarla )}$,
see \cite{R-Loj-1} for details. 

Thus also $\norm{P_{H^1}^{T_{\bar z}\ZZ} \bar w_1}_{H^1(\Si, \gbarla )}=o(1) \norm{\bar w}_{H^1(\Si, \gbarla )}$, allowing us to exploit the positive definiteness of $d^2E$ at $\bar u_1=\bar z$ in the direction $\bar w_1-P_{H^1}^{T_{\bar z}\ZZ} \bar w_1$ as well as that $\norm{\bar w_1-\bar w}_{H^1(\Si, \gbarla )}=o(1) \norm{\bar w}_{H^1(\Si, \gbarla )}$ to deduce
\beq
\label{est:step1-1-hallo}
\norm{\bar w}_{H^1(\Si,\bar g_{\la,a})}^2 \leqs d^2E(\bar u_1)(\bar w_1,\bar w_1).
\eeq
We now use Remark \ref{rmk:E-var} and $\norm{\bar w}_{L^\infty}+\norm{\bar w}_{H^1(\Si, \gbarla )}=o(1)$ to write 
\beqa \label{est:step1-2-hallo}
dE(\bar u_1)(\bar w_1)-dE(\bar u_0)(\bar w_0)&=\int_0^1\ddt (dE(\bar u_t)(\bar w_t)) dt=\int_0^1d^2E(\bar u_t)(\bar w_t,\bar w_t) dt+\int_0^1 dE(\bar u_t)(\ddt\bar w_t) dt\\
&=d^2E(\bar u_1)(\bar w_1,\bar w_1)+o(1)  \norm{\bar w}_{H^1(\Si, \gbarla )}^2+\int_0^1 \err_t dt
\eeqa
for
$\err_t=dE(\bar u_t)(\ddt\bar w_t)= dE(\bar u_t)(P_{\bar u_t}(\ddt\bar w_t))=\int_\Si \na \bar u_t \na (P_{\bar u_t}(\ddt\bar w_t))$. 
We note that this error term is also of lower order since 
 $\bar w$ is nearly tangential to $T_{\bar u_t}S^2$ so the variation of $\bar w_t=\abs{\bar u+t\bar w}^{-1}P_{\bar u_t} \bar w$ is essentially normal. To be more precise, a short calculation and \eqref{est:na-z-1} allow us obtain pointwise bounds of 
$\abs{ \na (P_{\bar u_t}(\ddt\bar w_t))}\leqs \bar\rho_{\mu,b} \abs{\bar w}^3+ \abs{\na \bar w} \abs{\bar w}^2, $
which, as $\norm{\bar w}_{L^\infty(\Si)}=o(1)$, yields $\abs{\err_t}\leqs o(1) \norm{\bar w}_{H^1(\Si, \gbarla )}^2$.

Combined, \eqref{est:step1-1-hallo}, \eqref{est:step1-2-hallo} and \eqref{est:energy-old-var} hence yield
\beqa
\label{est:step1-1}
\norm{\bar w}_{H^1(\Si,\bar g_{\la,a})}^2 &\leqs dE(\bar u_1)(\bar w_1)-dE(\bar u_0)(\bar w_0)=dE(\bar z)(P_{\bar z}(\bar w))-dE(\bar u)(P_{\bar u}(\bar w))\\
&\leqs \la^{-2}(\log\la)^\half 
\norm{P_{\bar z}(\bar w)}_{H^1(\Si,\bar g_{\la,a})}+
\norm{\tau_{\bar{g}_{\mu,b}}(\bar{u}) }_{L^2(\Si, \bar{g}_{\mu,b})}\norm{\bar w}_{L^2(\Si, \bar{g}_{\mu,b})}, 
\eeqa 
which, as $\gbarmb \sim \gbarla$ and $\norm{P_{\bar z}(\bar w)}_{H^1(\Si,\bar g_{\la,a})}\sim  \norm{\bar w}_{H^1(\Si,\bar g_{\la,a})} $, implies the key estimate
\eqref{claim:step1}.

For the proof of the second key estimate \eqref{claim:step2}, we now 
crucially use that 
our family of metrics $\GG_b$ provides an efficient way of encoding the formation of a bubble.

\newcommand{\zmblaeps}{z_{\mu_\eps,b}^{\la_\eps,a_\eps}}
\newcommand{\gmbep}{g_{\mu_\eps,b}}
\textit{Proof of \eqref{claim:step2}:} \\
For this proof we work in the alternative viewpoint, so 
pull-back all maps by the \textit{fixed} diffeomorphism $T_{\mu,b}$, i.e. set 
$u:=\bar u\circ T_{\mu,b}$, $z:=\bar z\circ T_{\mu,b}$
$w:= u-z=\bar w\circ  T_{\mu,b}$, interpolate  using 
 $ u_t:=\bar u_t\circ T_{\mu,b}=\pi_{S^2}(u+tw)$, set $w_t := \partial_t u_t = \abs{u+tw}^{-1} P_{u_t}(w)$ and work with respect to
$\gmb=T_{\mu,b}^*\gbarmb\in \GG_b$. 

We now use the variation $(z_\eps,g_\eps)=(\zmblaeps,\gmbep)$ from Lemma \ref{lemma:special-variation} for which we know that  $ \norm{\peps z_\eps}_{H^1(\Si,g)} + \norm{\peps z_\eps}_{L^{\infty}(\Si)}\leqs \la^{-2}$
and that 
\beqa \label{est:step2-2}
\la^{-3} &
\sim \ddeps E(z_\eps,g_\eps)=dE(z,g)(\peps z_\eps,\peps g)= dE(u,g)(\peps z_\eps,\peps g)+ \int_{0}^1 \ddt [ dE(u_t,g)(\peps z_\eps,\peps g) ]dt \\
&= dE(u,g)(\peps z_\eps,\peps g)+
d^2E(z,g)((\peps z_\eps,\peps g),(P_z w,0)) +\err_2
\eeqa
where we split the error term 
$\err_2:= \int_0^1 e_z(t)+e_g(t) dt$ 
into the contribution of $\peps z_\eps$ and $\peps g$. 

Since the second variation of $E_g(\cdot):=E(\cdot,g)$ is 
$d^2E_g(u_t)(v_1,v_2) =\int_{\Si} \nabla_g v_1 \nabla_g v_2 - \abs{\nabla_g u_t}^2 v_1v_2 \,dv_{g}
$ we immediately see that the last two terms in  
\beqas
\abs{e_z(t)}&= \abs{\ddt( dE_g(u_t)(\peps z_\eps))-d^2E_g(z)(\peps z_\eps,P_z w)}= \abs{\ddt( dE_g(u_t)(P_{u_t}\peps z_\eps))-d^2E_g(z)(\peps z_\eps,P_z w)}
\\
&\leq \abs{dE_g(u_t)(\ddt P_{u_t}(\peps z_\eps))}+ \abs{d^2E_g(u_t)(P_{u_t}\peps z_\eps,w_t)}+ \abs{d^2E_g(z)(\peps z_\eps, P_z w)}
\eeqas 
are both bounded by 
$ C \norm{\peps z_\eps}_{H^1(\Si,g)} \norm{w}_{H^1(\Si,g)}+C \norm{\peps z_\eps}_{L^\infty(\Si,g)} \norm{w}_{H^1(\Si,g)}^2\leqs \la^{-2} \norm{w}_{H^1(\Si,g)}$. 
\\
As  
$ dE_g(u_t)(\ddt P_{u_t}(\peps z_\eps))
=\int_\Si \na_g u_t \na_g (P_{u_t}(\ddt P_{u_t}(\peps z_\eps))dv_g $ 
and as 
 $P_{u_t}(\ddt (P_{u_t} \ppeps z_\eps)) = -\langle \peps z_\eps, u_t \rangle w_t$, it is easy to see that the same bound also applies to this term, thus ensuring that 
$  \abs{e_z(t)} \leqs  \la^{-2} \norm{w}_{H^1(\Si,g)}$. 

As  $P_z w=\ddelta u_{1+\de}$ we can furthermore write 
\beqas
e_g(t)&:= \ddt( dE(u_t,g)(0,\peps g))-d^2E(z,g) ((0,\peps g),(P_z w,0))\\
&=\ddelta \big[ ( dE(u_{t+\de},g)-dE(u_{1+\de},g))(0,\peps g)\big] 
 = -\thalf \langle \ddelta (k(u_{t+\de},g)-k(u_{1+\de},g)),\peps g\rangle_{L^2(\Si,g)}\eeqas
and hence use that $\norm{\peps g}_{L^\infty(\Si,g)}\sim \norm{\pmu g_\mu}_{L^\infty(\Si,g)}\sim \mu^{-1}\sim \la^{-1}$ to bound
\beqas
\label{def:metric-error}
\abs{e_g(t)}&\leqs  
\la^{-1} 
\norm{\ddelta (k(u_{t+\de},g)-k(u_{1+\de},g))}_{L^1(\Si,g)}
\leqs \la^{-1} \int_{\Si} \abs{\na_g z}_g^2\abs{w}^2+\abs{\na_g w}_g^2 
dv_g \leqs \la^{-1}\norm{w}_{H^1(\Si,g)}^2
.\eeqas 
As $\EE_b$ is the restriction of the energy to $H^1(\Si,S^2)\times \GG_b$ which contains $(u_\eps,g_\eps)$ 
we can write the first term $T_1:= dE(u,g)(\peps z_\eps,\peps g)$ in 
\eqref{est:step2-2} as 
$T_1=\langle \na\EE_b(u,g),\peps (z_\eps,g_\eps)\rangle_{L^2(\Si,g)}$ and use \eqref{est:special-var} to bound 
\beqas 
\label{est:step2-t-1}
\abs{T_1}\leq  
 \norm{\na \EE_b(u,g)}_{L^2(\Si,g)} \norm{\peps( z_\eps,g_\eps)}_{L^2(\Sigma,g)}
 \leqs \la^{-2}  \norm{\na \EE_b(u,g)}_{L^2(\Si,g)}. 
\eeqas 
Similarly, 
applying \eqref{est:sec-var-map} and using that $\norm{P_z w}_{H^1(\Si,g)}\sim\norm{w}_{H^1(\Si,g)}$ while $\abs{\peps a_\eps}\leqs \la^{-2}$ 
allows us to bound the second term $T_2:= d^2E(z,g)((\peps z_\eps,\peps g),(P_z w,0))$ in \eqref{est:step2-2} by 
\beqas 
\label{est:step2-t-2}
\abs{T_2}\leqs 
\big[ 
\la^{-3} (\log \la)^{\half}\abs{\peps \la}+\la^{-1} \abs{\peps a}+\la^{-3} \abs{\peps \mu_\eps}\big]
\norm{w}_{H^1(\Si,g)}\leqs \la^{-3} (\log \la)^{\half}\norm{w}_{H^1(\Si,g)}.
\eeqas 
Inserting these bounds into
\eqref{est:step2-2}
hence yields 
\beqas
\la^{-3}\leqs \la^{-2}\norm{\na \EE_b(u,g_{\mu,b})}_{L^2(\Si,g)}+
\la^{-3} (\log \la)^{\half} \| w \|_{H^1(\Si, g)}+ \la^{-2}\norm{w}_{H^1(\Si, g)}+ \la^{-1} \| w \|_{H^1(\Si, g)}^2,
\eeqas
which immediately yields our second key estimate \eqref{claim:step2}, thus completing the proof of the theorem. 
\end{proof}

\section{Proof of the technical lemmas used in Section \ref{sec:proof-LojEst}}
\label{sec:technical-proofs}

\subsection{Precise definition and key properties of the maps $\bar z_{\la,a}$ }\label{subsec:sing-models}
$ $ \\
We begin by recalling how the maps 
 $\zbarla$  were constructed as modifications of the maps 
 $\pi_\la \circ F_a$
 in \cite{MRS,R-Loj-1}.
 To this end we first recall that the leading order term in the expansion
$$\pi_\la(x)-\pi(\infty)=\left( \tfrac{2\la x}{1+\abs{\la x}^2}, \tfrac{2}{1+\abs{\la x}^2} \right)=\mfrac{2}{\la}\big(\mfrac{x}{\abs{x}^2},0\big)+O(\la^{-2}) \text{ for } \abs{x}\sim 1$$
can be viewed as a multiple of the derivative of the principal part $-\log\abs{x-y}$ of Green's function. To be more precise, as $G:\Si\times \Si\to \R$ is
characterised by $-\Delta_pG(p,a)=2\pi (\de_a-(\Area(\Si,g))^{-1})$, we can write the function 
$G_a:=G\circ F_a^{-1}$, which represents $G$ in the usual conformal coordinates around $a$, as 
 $G_a(x,y)=-\log\abs{x-y}+J_a(x,y)$ for a 
smooth function $J_a:\DD_\iota\times \DD_\iota\to \R$, called the regular part of Green's function. 

Setting  $\hatja(x) = (\nabla_y J_a(x,0) - \na_y J_a(0,0),0)$ 
we can hence use that for $x\in \DD_\iota$ with $\abs{x}\sim 1$ the function 
\beq \label{def:v-hat} \hat v_{\la,a}(x):=\pi_\la(x)+\mfrac{2}\la \hatja(x) \eeq 
 agrees upto errors of order $O(\la^{-2})$ (in $C^k$) with the function 
$\pi(\infty)+\frac{2}{\la} (\na_y G_a(x,0)-\na_yJ_a(0,0),0)$ 
that represents  
$$H_{\la,a} (p):=\pi(\infty)+\mfrac2\la (\na_aG(p, a)-\na_yJ_a(0,0),0)$$ in local coordinates. 
We note that  $H_{\la,a}$ 
is defined and harmonic on all of $\Si\setminus \{a\}$, allowing us to  obtain suitable models $\zbarla$ by interpolating and projecting onto $S^2$, i.e. by setting 
\beq
\label{def:maps-v}
\bar v_{\la,a}:= \phi^a \cdot \hat v_{\la,a}
\circ F_a+(1-\phi^a)H_{\la,a} \text{ and } \bar z_{\la,a}:=\pi_{S^2}(\bar v_{\la,a})=\mfrac{\bar v_{\la,a}}{\abs{\bar v_{\la,a}}}.
\eeq
Here $\phi^a\in C_c^\infty( B_{R_\Si(r_0)}(a),[0,1])$ is a cut-off function 
with $\ph^a \equiv 1$ on $B_{R_\Si(\half r_0)}(a)$ that is obtained by extending  $\phi\circ F_a$ by zero for a fixed 
$\ph \in C_{c}^{\infty}(\DD_{r_0},[0,1])$  with $\ph \equiv 1$ on $\DD_{\half r_0}$.

\begin{rmk}\label{rmk:coord-normal}
 As in  \cite{MRS,R-Loj-1} we normalise our choice of coordinate charts $F_a$ so that the transition maps $F_{b,a}$ are given by the Euclidean translations $F_{b,a} (x)=x- F_b(a)$ if $\gamma=1$ and the hyperbolic translations $F_{b,a}(x)=\tfrac{x_1+\text{i} x_2-F_b(a)}{1-\overline{F_b(a)}(x_1+\text{i} x_2)}$ if  $\gamma\geq 2$, noting that this always allows us to write 
\beq
\label{def:err-ab}
F_{b,a} (x)=x- F_b(a)+\err_{b,a}(x)
\eeq
for an error term that satisfies 
\beq
\label{est:err-bound}
\abs{\err_{b,a}(x)} + \abs{x} \abs{\nabla_x \err_{b,a}(x)} \leqs d_{a,b} \abs{x}^2 + d_{a,b}^2\abs{x} \text{ and } \abs{\nabla_x^2 \err_{b,a}(x)} \leqs d_{a,b},
\eeq
for $d_{a,b}=\dist_{g_\Si}(a,b)$ and 
\beq
\label{est:err-ab-errb}
\abs{\na_a \err_{b,a}(x)}+\abs{x}\abs{\na_x \na_a \err_{b,a}(x)} \leqs \abs{x}^2+ d_{a,b} \abs{x}.
\eeq 
\end{rmk}
The construction of these maps ensures that  
\beq
\label{eq:map-inside}
\bar z_{\la,a}=\hat z_{\la,a}\circ  F_a  \text{ on } B_{R_\Si(\half r_0)}(a) 
\text{ for } \hat z_{\la,a}:= 
\pi_{S^2}(\hat v_{\la,a})=\pi_{S^2}(\pi_\la+\mfrac2\la \hatja(x)) 
\eeq
as well as that for any fixed $R>0$
\begin{eqnarray}
\label{est:map-var-outside}
    \| \bar{z}_{\la,a} -\pi(\infty)\|_{C^1(\Si \setminus B_R(a))}\leqs \la^{-1
} &\text{ and }&
\| \ppeps \bar{z}_{\la_\eps,a_\eps} \|_{C^1(\Si \setminus B_R(a))} \leqs \la^{-2}\abs{\peps \la} + \la^{-1}\abs{\peps a},\\
\label{est:laplace-var-outside}
\| \Delta_{g_\Si} \bar v_{\la,a} \|_{L^{\infty}(\Si \setminus B_R(a))} \leqs \la^{-2} &\text{ and } &\| \ppeps \Delta_{g_\Si} \bar v_{\la_\eps,a_\eps} \|_{L^{\infty}(\Si \setminus B_R(a))} \leqs \la^{-3} \abs{\peps \la}+ \la^{-2} \abs{\peps a}.
\end{eqnarray}
These estimates in particular ensure that  
\beq
\label{est:tension-var-outside}
\| \tau(\bar z_{\la,a})  \|_{L^{\infty}(\Si \setminus B_R(a))}\leqs \la^{-2} \text{ and }
\| \peps \tau(\bar z_{\la_\eps,a_\eps})  \|_{L^{\infty}(\Si \setminus B_R(a))} \leqs \la^{-3} \abs{\peps \la}+ \la^{-2} \abs{\peps a}.
\eeq
Writing $$\tau(\hat z_{\la,a}) =P_{\hat z_{\la,a}}(\Delta \pi_{S^2}(\hat v_{\la,a}))= P_{\hat z_{\la,a}}(d^2\pi_{S^2}(\hat v_{\la,a})(\nabla \hat v_{\la,a},\nabla \hat v_{\la,a}) + d\pi_{S^2}(\hat v_{\la,a})(\Delta \hat v_{\la,a}))$$ 
and using that $ \widehat{j_{a}} $  
is a harmonic function which vanishes at $x=0$ one can also see that 
\beqa
\label{est:tension-disks-RL1}
\abs{\tau(\hat z_{\la,a})(x)} &\leqs \lambda^{-2} + \lambda^{-1} \abs{x} \abs{\nabla \pi_\la(x)}^2 + \lambda^{-1} \abs{\nabla \pi_\la(x)}(1+\lambda \abs{x})^{-1}\sim \la^{-2}+ (1+\la \abs{x})^{-3} ,
\eeqa compare also 
 \cite[Estimate (3.49)]{R-Loj-1} as well as that 
\beqa
\label{est:tau-avar} 
\left|
\peps \tau (\hat z_{\la,a_\eps} ) \right| \leqs & \; \abs{\peps \hat v_{\la,a_\eps} }(\abs{\nabla \hat v_{\la,a}}^2+ \abs{\Delta \hat v_{\la,a}}) + \abs{\nabla \peps \hat v_{\la,a_\eps}} \abs{\nabla \hat v_{\la,a}} 
\leqs \; \la^{-1} (\abs{x} 
\abs{\nabla \pi_{\la}}^2 
+ \abs{\nabla \pi_{\la}}) \leqs \la^{-1}\abs{\na \pi_\la}
\eeqa
whenever $\abs{\peps a}\leqs 1$. 
In particular 
\beq 
\label{est:tau-weighted} 
\norm{\tau(\hat z_{\la,a})(1+\la \abs{x})}_{L^2(\DD_{r_1})}
+\norm{\peps \tau(\hat z_{\la,a_\eps}) \abs{\na \pi_\la}^{-1}}_{L^2(\DD_{r_1})}
\leqs \la^{-1},
\eeq
noting that the second term is simply the expression for $\norm{\peps \tau_{\bar g_{\la,a}}(\bar z_{\la,a_\eps}\circ F_{a_\eps,a})}_{L^2(B_{R_1}(a),\bar g_{\la,a})}$ in local coordinates. 

\subsection{Variations of $E(\bar z_{\la,a})$ with respect to $a$}\label{subsec:dEda}$ $ \\
Here we briefly explain how the above estimates imply that $ \frac{d}{d \eps} E(\bar{z}_{\la,a_\eps})\leqs \la^{-2}$ whenever
$\abs{\peps a} = 1$, in order to establish the estimate
\eqref{est:claim-peps-za} used in the previous section. 
To see this we fix a function 
$\psi \in C_c^{\infty}(\DD_{\frac12r_0},[0,1])$ with $\psi \equiv 1$ on $\DD_{\frac13 r_0}$ and use the cut-off functions  $\psi^{a_\eps}:= \psi \circ F_{a_\eps}\in C_c^{\infty}(B_{R_\Si(\frac12 r_0)}(a_\eps),[0,1]) $ to 
split $$E(\bar{z}_{\la,a_\eps})= \mfrac12  \int_{\Si} (1-\psi^{a_\eps}) \abs{\nabla_{g_\Si} \bar{z}_{\la,a_\eps}}^2 dv_{g_\Si} + \mfrac12 \int_{\Si} \psi^{a_\eps} \abs{\nabla_{g_\Si} \bar{z}_{\la,a_\eps}}^2 dv_{g_\Si}=:I_1(\eps)+I_2(\eps).$$
Since $1-\psi^{a_\eps}$ vanishes on a fixed ball $B_{R}(a)$ (for $\eps$ small) we immediately get from \eqref{est:map-var-outside} that 
$
\abs{\frac{d}{d \eps} I_1}  \leq C \la^{-2}.
$

We now use that $\bar{z}_{\la,a_\eps}=  \hat z_{\la,a_\eps}\circ F_{a_\eps}$ on the support of $\psi^{a_\eps}=\psi\circ F_{a_\eps}$, compare \eqref{eq:map-inside}, and the conformal invariance of $E$ 
to write  
$I_2(\eps)=\thalf \int
\psi \abs{\na \hat z_{\la,a_\eps}}^2 dx$. As  $\peps \hat{z}_{\la,a_\eps}\in T_{\hat z_{\la,a}}S^2$
we hence get that  
\beqa 
\label{eq:ddeps-I2}
\ddeps I_2 &
=
\int \psi \na \peps \hat z_{\la,a_\eps}\na \hat z_{\la,a} dx= -\int \na \psi \peps \hat z_{\la,a_\eps}\na \hat z_{\la,a} dx
-\int  \psi \peps \hat z_{\la,a_\eps}\tau(\hat z_{\la,a}) dx.
\eeqa
As $\hatja$ is smooth with $ \widehat{j_{a_\eps}}(0)=0$ we can bound 
 $\abs{\ppeps \hat z_{\la,a_\eps}(x)} \leqs \la^{-1} \abs{\ppeps \widehat{j_{a_\eps}}(x)} \leqs \la^{-1} \abs{x}\leq \la^{-2}(1+\la \abs{x})$. Combined with
\eqref{est:tau-weighted}
  and the fact that  
$\abs{\nabla \hat z_{\la,a}} \leqs \la^{-1}$ on 
$\supp(\na \psi)$
this allows us to estimate
\beqa 
\abs{\ddeps I_2(\eps)}
&\leqs \la^{-2} +\la^{-2}   \norm{(1+\la\abs{x}) 
\abs{\tau(\hat z_{\la,a})}}_{L^1(\DD_{r_1})}
 \leqs \la^{-2}, 
\eeqa
thus completing the proof of the claimed bound $ \frac{d}{d \eps} E(\bar{z}_{\la,a_\eps})\leqs \la^{-2}$ that we need to establish \eqref{est:claim-peps-za}.

\subsection{Proof of Lemma \ref{lemma:special-variation}}
\label{subsec:proof-lemma-norm-var}
$ $ \\
Here we establish 
 Lemma \ref{lemma:special-variation}, i.e. 
that $\norm{\peps z_\eps}_{\dot{H}^1(\Si,g)} +\norm{\peps z_\eps}_{L^{\infty}(\Si)}\leqs \la^{-2}$ for 
  $z_\eps= \bar z_{\la_\eps,a_\eps}\circ T_{\mu_\eps,b}
 $ determined by parameters  $\mu_\eps$, $\la_\eps$ and $a_\eps$ satisfying \eqref{eqn:v-par-def} and $g=g_{\mu,b}$. 
As  the parameters are assumed to be compatible, compare   \eqref{est:compatible-for-free}, we know that $\Si_0:= \Si \setminus B_{R_\Si(r_1)}(b)$ is disjoint from a fixed sized ball $B_{R}(a)$ around $a$ 
and that  
$B_{R_\Si(r_1)}(b)$ is contained in the ball $B_{R_\Si(\half r_0)}(a_\eps)$ where  \eqref{eq:map-inside} is applicable.

 On $\Si_0$ we have  $T_{\mu,b}\equiv \id$, 
so the maps and metrics are simply given by  
$z_{\eps}= \bar z_{\la_\eps,a_\eps}$  and $g=\gbarmb$ and the claimed bounds trivially hold as 
\eqref{est:map-var-outside} ensures that 
\beqs
\norm{\peps z_\eps}_{\dot{H}^1(\Si_0,g)}
+\norm{\peps z_\eps}_{L^{\infty}(\Si_0)}\leqs 
\norm{\peps \bar z_{\la_\eps,a_\eps}}_{C^1(\Si_0)} \leqs \la^{-2}\abs{\peps \la} + \la^{-1} \abs{\peps a} 
\leqs \la^{-2},
\eeqs
since $\abs{\peps \la}\sim 1$ and $\abs{\peps a}\leqs \la^{-2} $, compare \eqref{est:spe-var-a}. 

To establish the claims on 
 $B_{R_\Si(r_1)}(b)$ where $g_{\mu,b}=(T_\mu\circ F_b)^*g_{\pi,\mu}$ 
 we can work 
in the ($\eps$ independent) coordinates $x=T_\mu(F_b(p))$ in which $z_\eps=\bar z_{\la_\eps,a_\eps}\circ T_{\mu_\eps,b}= \pi_{S^2}\circ \hat v_{\la_\eps,a_\eps}\circ F_{a_\eps} \circ F_b^{-1}\circ T_{\mu_\eps}\circ F_b$
is given by 
\beq
\label{eq:tilde-z-eps}
\tilde z_\eps(x):= 
z_\eps\circ F_b ^{-1}\circ 
T_\mu^{-1}= 
\pi_{S^2}(\tilde w_\eps+\tilde j_\eps ) \text{  for } 
\tilde w_\eps = \pi_{\la_\eps}
\circ F_{b,a_\eps}\circ T_{\mu,\mu_\eps}, \; \tilde j_\eps:= 
\mfrac{2}{\la_\eps} \widehat{j_{a_\eps}}\circ F_{b,a_\eps}\circ T_{\mu,\mu_\eps}  ,\eeq
compare \eqref{def:v-hat} and \eqref{def:maps-v}.
The claimed bound on 
$\norm{\peps z_\eps }_{L^{\infty}}$ thus follows if we prove that $\norm{\peps \tilde w_\eps }_{L^{\infty}}+\norm{\peps \tilde j_\eps }_{L^{\infty}} \leqs \la^{-2}$,
while this and  the fact that 
 $\abs{\na (\tilde w+\tilde j)}\leqs \abs{\nabla \pi_\la\circ F_{b,a}}+\la^{-1}\leqs \abs{\nabla \pi_\la}$, compare \eqref{est:na-z-1}, then
 reduce the proof of the required $H^1$ bound to showing that  $\norm{\na \peps  \tilde w_\eps}_{L^2(\DD_{r_1})}+  \norm{\na \peps  \tilde j_\eps}_{L^2(\DD_{r_1})}\leqs \la^{-2}$. 
 
 As
$\norm{\peps \widehat{j_{a_\eps}}}_{C^1}+\abs{\peps{F_{b,a_\eps}}}\leqs \abs{\peps a_\eps}\leqs \la^{-2}$, and as \eqref{def:Tmu} ensures that 
$\peps T_{\mu,\mu_\eps}$ is of order $O(\la^{-1}) $ in $H^1\cap L^\infty$, the required bounds on $\peps  \tilde j_\eps$ immediately follow.

It hence remains to analyse $\tilde w_\eps=\pi\circ F_\eps$, 
$F_\eps(x):= \la_\eps F_{b,a_\eps}(T_{\mu,\mu_\eps}(x)).
$
As the 
 transition maps of $T_\mu$ and $S_\mu = \mu T_\mu$ are related by 
 $T_{\mu,\tilde \mu}(x) = \tfrac{1}{\tilde \mu}S_{\mu,\tilde \mu}(\mu x)$ 
 we can write 
$F_\eps(x):= \la_\eps F_{b,a_\eps}(\frac{1}{\mu_\eps} S_{\mu,\mu_\eps}(\mu x))
$
and hence use
\beqas
\peps F_\eps=
\tfrac{\la}{\mu}  dF_{b,a}(x)
\peps S_{\mu,\mu_\eps}(\mu x)+ 
\peps [\la_\eps F_{b,a_\eps}(\tfrac{\mu x}{\mu_\eps} )]=:  f_1+f_2
\eeqas
to split $\peps \tilde w_\eps=d\pi(\la F_{b,a}(x)) f_1+ d\pi(\la F_{b,a}(x)) f_2$.
We note that  
\beq \label{est:pi-derivative}
\abs{(d\pi)(\la F_{b,a}(x)}\leqs   (1+\la \abs{x})^{-2} \text{ and }\abs{\nabla((d\pi)(\la F_{b,a}(x)))}\leqs \la (1+\la \abs{x})^{-3}
\eeq
since $\abs{(d\pi)(y)} \sim( 1+\abs{y})^{-2}$,  $\abs{ (d^2\pi)(y)}
\leqs (1+\abs{y})^{-3}$ and since \eqref{est:compatible-for-free} guarantees $1+\abs{\la F_{b,a}(x)}\sim 1 +\la\abs{x}$.

Our choice of variation and Remark \ref{rmk:coord-normal} allow us to write
\beqas 
f_2& =\peps [ \tfrac{\la_\eps}{\mu_\eps}\mu x -\la_\eps F_b(a_\eps)+\la_\eps \err_{b,a_\eps}(\tfrac{\mu x}{\mu_\eps})]=\peps[\la_\eps \err_{b,a_\eps}(\tfrac{\mu x}{\mu_\eps})] \\
& =\peps \la_\eps \err_{b,a}(x)+ \la \na_a \err_{b,a}(x) \peps a-\la \mu^{-1} \peps \mu  \na_x \err_{b,a}(x) x
\eeqas
and hence use \eqref{est:err-bound}, \eqref{est:err-ab-errb} and $\abs{\peps a_\eps}\leqs \la^{-2}$  to bound 
\beqas 
\abs{f_2(x)}
&\leqs (d_{a,b}+\la^{-1})(\abs{x}^2+ d_{a,b}\abs{x}) 
\leqs \la^{-2}\abs{x}(1+\la\abs{x})\\
 \abs{\nabla f_2(x)} &\leqs  \abs{\nabla_x \err_{b,a}(x)} + \la \abs{\nabla_x \nabla_a \err_{b,a}(x)}\abs{\peps a} +  \abs{\nabla_x^2 \err_{b,a}(x)} \abs{x} \leqs \la^{-2}(1+\la\abs{x}).
 \eeqas
Combined with \eqref{est:pi-derivative}
this immediately gives the required $H^1\cap L^\infty$ bounds for $d\pi(\la F_{b,a}) f_2$.

To obtain the analogue bounds on  $d\pi(\la F_{b,a}) f_1$ it now suffices to recall from \eqref{def:Tmu} that 
 \beq
\label{est:s-transition}
\peps S_{\mu,\mu+\eps}(\mu x) = \peps((\mu+\eps) T_{\mu,\mu+\eps}(x)) = x - \phi_{\GG}(x)x = \psi(\log r_1^{-1} \abs{x})x
 \eeq
is supported on $\R^2\setminus \DD_{r_2}$ where both $\abs{(d\pi)(\la F_{b,a}(x)}$ and $\abs{\nabla((d\pi)(\la F_{b,a}(x)))}$
 are of order $O(\la^{-2})$.
 
\subsection{Bounds on the second variation}
\label{subsec:sec-var}$ $ \\
We finally establish  Lemma \ref{lem:sec-var-map}, i.e. show that \eqref{est:sec-var-map} holds for any $ f \in \Ga^{H^1}(z^* TS^2)$ with 
$ \| f \|_{H^1(\Si,g)}=1$ and any variation 
$(z_\eps,g_\eps) = T_{\mu_\eps,b}^* 
(\bar z_\eps=\bar{z}_{\lambda_\eps,a_\eps}, \bar g_\eps=\bar{g}_{\mu_\eps,b})$. 
As all $\bar g_\eps$ are conformal to $g_\Si$ we can  write
\beqas
d^2E(z, g)(\peps (z_\eps, g_\eps),(f,0))=
\ddeps dE(z_\eps,g_\eps)(f,0)=
\ddeps [dE(\bar z_\eps,\bar g_\eps)(f\circ T_{\mu_\eps,b}^{-1},0)]=\ddeps \big[ dE(\bar z_\eps)(\bar f\circ { 
T}_{\mu_\eps,\mu,b})\big] 
\eeqas
for 
 $T_{\mu_\eps,\mu,b}:=  T_{\mu,b}\circ T_{\mu_\eps,b}^{-1}$ and 
$\bar f:= f\circ T_{\mu,b}^{-1}$ and use that  
$\norm{\bar f}_{H^1(\Si,\bar g_{\la,a})}\sim \norm{\bar f}_{H^1(\Si,\bar g_{\mu,b})}=1$, compare \eqref{est:na-z-1}.

As an estimate of 
$\abs{\partial_\la \big[ dE(\bar z_{\la,a} )(\bar f)\big]}  \leqs \la^{-3} (\log\la)^{\half}$ was already established in Lemma 3.11 of \cite{R-Loj-1} it hence suffices to prove that  
\begin{eqnarray}
\abs{\partial_\eps \big[ dE(\bar z_{\la,a_\eps} )(\bar f)\big]} 
&\leqs &\la^{-1} \text{ for } \abs{\peps a_\eps}=1
\label{claim:2ndvar-a}\\
\abs{dE(\bar z_{\la,a} )(\peps \bar f_\eps  )} &\leqs & \la^{-3} \text{ for }\bar f_\eps:= \bar f\circ 
T_{\mu+\eps,\mu,b}
\label{claim:2ndvar-mu}
\end{eqnarray}
To prove \eqref{claim:2ndvar-mu} we
use that $\peps T_{\mu+\eps,\mu,b}$, and hence  
$\peps \bar f_\eps$, is 
supported on
$ B_{R_\Si(r_1)}(b)\subset B_{R_\Si(\half r_0)}(a)$ and work in the coordinates $x=F_a(p)$ in which 
$\bar z$ is given by $\hat z_{\la,a}$ from \eqref{def:v-hat},
while $\bar f_\eps$ is represented by
$\bar f_a\circ \hat T_\eps$ for  $ \hat T_\eps=F_{a,b}^{-1}\circ T_{\mu+\eps,\mu}\circ F_{a,b}$. 
As 
$\ppeps T_{\mu+\eps,\mu}=-\peps T_{\mu,\mu+\eps}$ is described by \eqref{def:Tmu} we have
\beq
\label{est:transition-SV}
\abs{\ppeps \hat T_\eps(x)} \leqs \abs{\ppeps T_{\mu,\mu+\eps}  (F_{a,b}(x))}\leqs 
\mu^{-1} \abs{F_{a,b}(x)} \leqs \la^{-1}(\abs{x}+d_{a,b})\leqs 
\la^{-2}(1+\la\abs{x})
\eeq
allowing us to use \eqref{est:tau-weighted} to bound 
\beq \label{est:proof-2ndvar-mu}
\abs{dE(\bar z_{\la,a} )(\peps \bar f_\eps ) }=
\left| \int_{\DD_{r_1}} \tau(\hat z_{\la,a}) \na \bar f_a \peps \hat T_\eps dx \right| \leq
 \la^{-2} \norm{\na \bar f}_{L^2(\Si,g_\Si)} 
\norm{\tau(\hat z_{\la,a})(1+\la \abs{x})}_{L^2(\DD_{r_1})}\leqs \la^{-3}.
\eeq
To prove
\eqref{claim:2ndvar-a} we argue as in Section \ref{subsec:dEda}, using cut-off functions 
$\psi^{a_\eps}:= \psi \circ F_{a_\eps}\in C_c^{\infty}(B_{R_\Si(\frac12 r_0)}(a_\eps),[0,1]) $ with  $\psi\equiv 1$ on $\DD_{\frac13 r_0}$ 
to split $dE(\bar z_{\la,a_\eps} )(\bar f)=\int_{\Si} \tau_{g_\Si}(\bar{z}_{\la,a_\eps}) \bar f dv_{g_\Si} $ as 
\beqa dE(\bar z_{\la,a_\eps} )(\bar f)= \int_{\Si} (1-\psi^{a_\eps}) \tau_{g_\Si}(\bar{z}_{\la,a_\eps}) \cdot \bar{f} dv_{g_\Si}
+
\int_{\DD_\iota} \psi \tau({\hat z}_{\la,a_\eps}) \cdot\bar{f}_{a_\eps} dx =:
I_1(\eps)+I_2(\eps)
\eeqa
where $\bar f_{a_\eps}=\bar f_a\circ F_{a,a_\eps}$ is the function that represents $\bar f$ in the coordinates $x=F_{a_\eps}(p)$.
Estimate 
 \eqref{est:tension-var-outside} immediately implies that   
$\abs{\ddeps I_1}\leqs \la^{-2}\abs{\peps a_\eps}=\la^{-2}$ while 
 $\abs{\peps F_{a,a_\eps}}\leqs 1$ and \eqref{est:tau-weighted} allow us to bound
\beqa \label{est:2nd-var-proof-a}
\abs{\ddeps I_2(\eps)}&\leqs
\int_{\DD_{r_1}} \abs{\tau(\hat z_{\la,a})} \abs{\nabla \bar f_a} dx+
\int_{\DD_{r_1}} \abs{\peps \tau(
\hat z_{\la,a_\eps})}\abs{\bar{f}_a} dx \\
&
\leqs \norm{\tau(\hat z_{\la,a})}_{L^2(\DD_{r_1})} +\norm{\peps \tau(\hat z_{\la,a_\eps}) \abs{\na \pi_\la}^{-1}}_{L^2(\DD_{r_1})} \norm{ \bar f_a}_{L^2(\DD_{r_1},g_{\pi,\la})} \leqs \la^{-1}.
\eeqa  
This completes the proof of  \eqref{claim:2ndvar-a} and hence the proof of the lemma.

\vspace{-0.1cm}
MR \& SW: Mathematical Institute, University of Oxford, Oxford OX2 6GG, United Kingdom


\begin{thebibliography}{99}
\vspace{-0.3cm}
\bibitem{BMS} A. Bernand-Mantel, C. B. Muratov and T. M. Simon, \emph{A quantitative description of skyrmions in ultrathin ferromagnetic films and rigidity of degree $\pm 1$ harmonic maps from $\mathbb{R}^2$ to $S^2$.} Arch. Rational Mech. Anal. \textbf{239} (2021) 219–299.

\bibitem{Faber-Krahn} L. Brasco,  G. De Philippis and  B. Velichkov, \textit{Faber–Krahn inequalities in sharp quantitative form.} Duke Math. J. \textbf{164} (2015) 1777-1831.

\bibitem{Fusco-Sobolev}
A. Cianchi, N. Fusco, F. Maggi and A. Pratelli, \textit{The sharp Sobolev inequality in quantitative form}. JEMS. \textbf{11}  (2009) 1105-1139.

\bibitem{DeLellis-Mueller} C. De Lellis and S. M\"uller
\textit{Optimal rigidity estimates for nearly umbilical surfaces}. 
JDG \textbf{69} (2005) 75-110.

\bibitem{DIL} B. Deng, R. Ignat and X. Lamy, \emph{The conformal limit for bimerons in easy-plane chiral magnets}. \href{https://arxiv.org/abs/2506.11955}{arxiv:2506.11955}.

\bibitem{Ding-Tian} W. Y. Ding and G. Tian, \emph{Energy identity for a class of approximate harmonic maps from surfaces}.
Comm. Anal. Geom. {\bf 3} (1995) 543–554.


\bibitem{Yamabe} M. Engelstein, R. Neumayer and L. Spolaor,
\textit{Quantitative stability for minimizing Yamabe metrics.}
Trans. Amer. Math. Soc. \textbf{9} (2022) 395--414.

\bibitem{isoperimetric-FMP} A. Figalli, F. Maggi and A. Pratelli, \textit{A mass transportation approach to quantitative isoperimetric inequalities.} Invent. Math.  \textbf{182} (2010) 167-211.

\bibitem{Figalli-Maggi-Pratelli-Sobolev} A. Figalli, F. Maggi and A.  Pratelli, \textit{Sharp stability theorems for the anisotropic Sobolev and log-Sobolev inequalities on functions of bounded variation.} Adv. in Math. \textbf{242} (2013) 80--101.


\bibitem{Figalli-recent} A. Figalli and Y. R.-Y. Zhang, \textit{Sharp gradient stability for the Sobolev inequality.} Duke Math. J. \textbf{171} (2022) 2407-2459.

\bibitem{BMI-1} A. Figalli, P. van Hintum and M. Tiba, \emph{Sharp quantitative stability of the Brunn-Minkowski inequality.}  
\href{https://arxiv.org/abs/2310.20643}{arxiv:2310.20643}.

\bibitem{sharp-isoperimetric} N. Fusco, F. Maggi and A. Pratelli, \textit{The sharp quantitative isoperimetric inequality.} Ann. of Math. \textbf{168} (2008), 941–980.

\bibitem{ZLG1} A. Guerra, X. Lamy and K. Zemas, \emph{Optimal quantitative stability of the M\" obius group of the sphere in all dimensions.} \href{https://arxiv.org/abs/2401.06593}{arxiv:2401.06593}

\bibitem{ZLG2} A. Guerra, X. Lamy and K. Zemas. \emph{Sharp quantitative stability of the Möbius group among sphere-valued maps in arbitrary dimension.} Trans. Amer. Math. Soc. \textbf{378} (2025) 1235-59

\bibitem{H-Z} J. Hirsch and K. Zemas, \emph{A note on a rigidity estimate for degree $\pm1$ conformal maps on $S^2$}.  Bull. Lond. Math. Soc. {\bf 54} (2022) 256-263.

\bibitem{H-R-T-TMF} T. Huxol, M. Rupflin and P. Topping, \emph{Refined asymptotics of the Teichmüller harmonic map flow into general targets.} Calc. Var. Partial Differential Equations {\bf 55} (2016) 1-25.

\bibitem{Lamm-Nguyen} T. Lamm and H. Nguyen, \textit{Quantitive rigidity results for conformal immersions}.
Amer. J. Math. \textbf{136} (2014) 1409--1440.

\bibitem{Lin-Wang} F.-H. Lin and C.-Y. Wang, \emph{Energy identity of harmonic map flows from surfaces at finite singular
time.} Calc. Var. Partial Differential Equations {\bf 6} (1998) 369-380.


\bibitem{Luckhaus} S. Luckhaus and K. Zemas, 
\textit{Rigidity estimates for isometric and conformal maps from $S^{n-1}$ to $\R^n$.}
Invent. Math. \textbf{230} (2022) 375--461 . 

\bibitem{Sobolev-Maggi-Neumayer} F. Maggi, R. Neumayer and I. Tomasetti,
\textit{Rigidity theorems for best Sobolev inequalities}. Adv. in Math. \textbf{434} (2023) Paper No. 109330, 43 pp.

\bibitem{MRS} A. Malchiodi, M. Rupflin and B. Sharp, \emph{\Loj inequalities near simple bubble trees}. Amer. J. Math. \textbf{146} (2024) no. 5, 1361--1397.


\bibitem{MSkyr} A. Monteil, C.B. Muratov, T.M. Simon and V.V. Slastikov, \emph{Magnetic skyrmions under confinement}. Communications in Mathematical Physics \textbf{404} (2023) 1571-1605.

\bibitem{Qian-Tian} J. Qing and G. Tian, \emph{Bubbling of the heat flows for harmonic maps from surfaces.} Comm. Pure
Appl. Math. {\bf 50} (1997) 295-310.

\bibitem{R-TMF-2} M. Rupflin, \emph{Flowing maps to minimal surfaces: Existence and uniqueness of solutions}.  Ann. Inst. H. Poincar\'e C Anal. Non Lin\'eaire,
{\bf 31} (2014) 349–368.


	
\bibitem{R-T-TMHF-1} M. Rupflin and P. Topping, \emph{Flowing maps to minimal surfaces.} Amer. J. Math. {\bf 138} (2016) 1095-1115.


\bibitem{R-Loj-1} M. Rupflin, \emph{\Loj inequalities for almost harmonic maps near simple bubble trees}. Ann. Inst. H. Poincar\'e C Anal. Non Lin\'eaire \textbf{42} (2025) no. 4, 807--850.

\bibitem{R-Loj-2} M. Rupflin,
\textit{Low energy levels of harmonic spheres in analytic manifolds}. Journal of Functional Analysis, \textbf{289} (2025)  Article 111006.


\bibitem{R-Rig-23} M. Rupflin, \emph{Sharp quantitative rigidity results for maps from $S^2$ to $S^2$ of general degree}. \href{https://arxiv.org/abs/2305.17045}{arxiv:2305.17045}.



	
\bibitem{Simon} L. Simon, \emph{Asymptotics for a Class of Non-Linear Evolution Equations, with Applications to Geometric Problems.\/} Ann. of Math. {\bf 118} (1983) 525--571.
		

\bibitem{Struwe} M. Struwe, \emph{On the evolution of harmonic mappings of Riemannian surfaces\/}. Comment. Math. Helv. {\bf 60} (1985) 558--581.

\bibitem{Topping-rigidity} P. Topping, \emph{Rigidity in the harmonic map heat flow}. J. Differential Geometry. {\bf 45} (1997) 593–610.

\bibitem{Topping-HMF-2} P. Topping, \emph{Repulsion and quantization in almost-harmonic maps, and asymptotics of the harmonic map flow}. Ann. of Math. {\bf 159} (2004) 465-534.

\bibitem{Topping-deg-1} P. Topping, \emph{A rigidity estimate for maps from $S^2$ to $S^2$ via the harmonic map flow}. Bull. Lond. Math. Soc. \textbf{55} (2023) 338--343.

\bibitem{Alex-Loj} A. Waldron, \emph{\Loj inequalities for maps of the 2-sphere.} \href{https://arxiv.org/abs/2312.16686}{arxiv:2312.16686}

\end{thebibliography}
\end{document}